\newtheorem{theorem}{Theorem}[section]
\newtheorem{lemma}[theorem]{Lemma}
\newtheorem{proposition}[theorem]{Proposition}
\theoremstyle{definition}
\newtheorem{definition}[theorem]{Definition}
\newtheorem{remark}[theorem]{Remark}
\numberwithin{equation}{section}
\title[Global existence and blowup]
      {Global existence and blowup for Choquard equations with an inverse-square potential}
\author[Xinfu Li]{}
 \keywords{well-posedness; blowup; virial identity; inverse-square potential; Choquard equation.}
\thanks{Email Address:  lxylxf@tjcu.edu.cn (XL)}
\begin{document}
\maketitle

\centerline{\scshape Xinfu Li}
\medskip
{\footnotesize
 \centerline{School of Science, Tianjin University of Commerce, Tianjin 300134,
P. R. China}}

\bigskip

\begin{abstract}
In this paper, the Choquard equation with an inverse-square
potential and both focusing and defocusing nonlinearities in the
energy-subcritical regime is investigated. For all the cases, the
local well-posedness result in $H^1(\mathbb{R}^N)$ is established.
Moreover, the global existence result for arbitrary initial values
is  proved in the defocusing case while  a global
existence/blowup dichotomy below the ground state is established  in the focusing case.\\
\textbf{2010 Mathematics Subject Classification}: 35Q55, 35B30,
35B44, 35B08.
\end{abstract}

\section{Introduction and main results}

\setcounter{section}{1}
\setcounter{equation}{0}

In this paper, we consider the Cauchy problem for  the Choquard
equation with an inverse-square potential
\begin{equation*}\mathrm{(CH_b)}\qquad
\begin{cases}
i\partial_tu-\mathcal{L}_bu=-a(I_{\alpha}\ast|u|^{p})|u|^{p-2}u,
\quad (t,x)\in
\mathbb{R}\times\mathbb{R}^{N},\\
u(0,x)=u_0(x),\quad x\in \mathbb{R}^{N},
\end{cases}
\end{equation*}
where $u: \mathbb{R}\times\mathbb{R}^{N}\to \mathbb{C}$, $u_0:
\mathbb{R}^{N}\to \mathbb{C}$,  $N\geq 3$, $\alpha\in(0,N)$,
$(N+\alpha)/N<p<(N+\alpha)/(N-2)$,  $I_{\alpha}$ is the Riesz
potential defined for every $x\in \mathbb{R}^N \setminus \{0\}$ by
\begin{equation*}
I_{\alpha}(x)=\frac{\Gamma(\frac{N-\alpha}{2})}{\Gamma(\frac{\alpha}{2})\pi^{N/2}2^\alpha|x|^{N-\alpha}}:=\frac{\mathcal{A}}{|x|^{N-\alpha}}
\end{equation*}
with $\Gamma$ denoting the Gamma function (see \cite{Riesz1949AM}),
$a=1$ (focusing case) or $-1$ (defocusing case), and $\mathcal{L}_b$
is an inverse-square potential. More precisely,
\begin{equation*}
\mathcal{L}_b:=-\Delta +b|x|^{-2}\ \mathrm{with}\ b>-(N-2)^2/4,
\end{equation*}
and we consider the Friedrichs extension of the quadratic form $Q$
defined on $C_0^{\infty}(\mathbb{R}^N)\setminus\{0\}$ via
\begin{equation*}
Q(v):=\int_{\mathbb{R}^N}|\nabla v(x)|^2+\frac{b}{|x|^2}|v(x)|^2dx.
\end{equation*}
The choice of the Friedrichs extension is natural from a physical
point of view; furthermore, when $b=0$, $\mathcal{L}_b$ reduces to
the standard Laplacian $-\Delta$. For more details, see for example
\cite{Kalf-Schmincke 1975}.

The restriction on $b$ guarantees the positivity of $\mathcal{L}_b$.
In fact, by using the sharp Hardy inequality
\begin{equation}\label{e1.1}
\frac{(N-2)^2}{4}\int_{\mathbb{R}^N}\frac{|v(x)|^2}{|x|^2}dx\leq
\int_{\mathbb{R}^N}|\nabla v(x)|^2dx\ \mathrm{for\ any\ } v\in
H^1(\mathbb{R}^N),
\end{equation}
we obtain that
\begin{equation*}
Q(v)=\|\sqrt{\mathcal{L}_b}v\|_{L^2}^2\sim \|\nabla v\|_{L^2}^2\
\mathrm{for\ any}\ b>-(N-2)^2/4.
\end{equation*}
In particular, the Sobolev space $\dot{H}^1(\mathbb{R}^N)$ is
isomorphic to the space $\dot{H}_b^1(\mathbb{R}^N)$ defined in terms
of $\mathcal{L}_b$ and we denote
\begin{equation*}
\|v\|_{\dot{H}_b^1}^2:=Q(v).
\end{equation*}

Solutions to ($\mathrm{CH_b}$) conserve the mass and energy (see
Section 3), defined respectively by
\begin{equation}\label{e1.2}
M(u(t)):=\int_{\mathbb{R}^N}|u(t,x)|^2dx
\end{equation}
and
\begin{equation}\label{e1.3}
\begin{split}
E_b(u(t)):&=\frac{1}{2}\int_{\mathbb{R}^N}(|\nabla
u(t,x)|^2+b|x|^{-2}|u(t,x)|^2)dx\\
&\qquad\qquad-\frac{a}{2p}\int_{\mathbb{R}^N}(I_\alpha\ast|u(t,x)|^p)|u(t,x)|^{p}
dx.
\end{split}
\end{equation}

Equation ($\mathrm{CH_b}$) enjoys  the scaling  invariance
\begin{equation*}
u(t,x)\mapsto
u_\lambda(t,x):=\lambda^{\frac{2+\alpha}{2p-2}}u(\lambda^2t,\lambda
x),\ \ \lambda>0.
\end{equation*}
Direct calculation gives that
\begin{equation*}
\|u_{\lambda}(0,x)\|_{\dot{H}^{\gamma}}=\lambda^{\gamma-\frac{N}{2}+\frac{2+\alpha}{2p-2}}\|u_{0}(x)\|_{\dot{H}^{\gamma}}.
\end{equation*}
This gives the critical Sobolev exponent
\begin{equation*}
\gamma_b=\frac{N}{2}-\frac{2+\alpha}{2p-2}.
\end{equation*}
The mass-critical case corresponds to $\gamma_b=0$ (or
$p=p_b:=1+(2+\alpha)/N$); The energy-critical case corresponds to
$\gamma_b=1$ (or $p=p^b:=(N+\alpha)/(N-2)$); And the inter-critical
case corresponds to $\gamma_b\in(0,1)$ (or $p\in (p_b,p^b)$).

Equation ($\mathrm{CH_b}$) is a nonlocal counterpart of the
Schr\"{o}dinger equation
\begin{equation*}\mathrm{(NLS_b)}\qquad
\begin{cases}
i\partial_tu-\mathcal{L}_bu=-a|u|^{q}u, \quad (t,x)\in
\mathbb{R}\times\mathbb{R}^{N},\\
u(0,x)=u_0(x),\quad x\in \mathbb{R}^{N},
\end{cases}
\end{equation*}
which has been studied extensively, see \cite{Dinh 2017} and
\cite{Lu-Miao 2018} and the references therein. For equation
($\mathrm{NLS_b}$), the mass-critical case corresponds to $q=q_b$
and the energy-critical case corresponds to $q=q^b$, where
\begin{equation*}
q_b:=\frac{4}{N}\ \mathrm{and}\ q^b:=\left\{\begin{array}{ll}
\frac{4}{N-2},&\
\mathrm{if}\ N\geq 3,\\
\infty,&\ \mathrm{if}\ N= 1,2.
\end{array}
\right.
\end{equation*}
Define
\begin{equation*}
\tilde{E}_b(v):=\frac{1}{2}\int_{\mathbb{R}^N}(|\nabla
v|^2+b|x|^{-2}|v|^2)dx-\frac{a}{q+2}\int_{\mathbb{R}^N}|v|^{q+2}dx,
\end{equation*}
\begin{equation*}
\tilde{\sigma}:=\frac{4-(N-2)q}{Nq-4},
\end{equation*}
\begin{equation*}
\tilde{H}(b):=\tilde{E}_{b\wedge 0}(\tilde{Q}_{b\wedge
0})M(\tilde{Q}_{b\wedge 0})^{\tilde{\sigma}},\ \
\tilde{K}(b):=\|\tilde{Q}_{b\wedge 0}\|_{\dot{H}_{b\wedge
0}^1}\|\tilde{Q}_{b\wedge 0}\|_{L^2}^{\sigma},
\end{equation*}
where $\tilde{Q}_b$ with $-(N-2)^2/4<b\leq 0$ is a radial ground
state to the elliptic equation
\begin{equation*}
\mathcal{L}_bQ+Q=Q^{q+1}.
\end{equation*}
We summarize parts of the results for
($\mathrm{NLS_b}$) in the following theorem.\\
\textbf{Theorem A}(\cite{{Dinh 2017},{Okazawa et al. 2012}}). Let
$N\geq 3$, $b>-(N-2)^2/4$, $a=\pm1$ and $u_0\in H^1(\mathbb{R}^N)$.

(1) If $0\leq q<q^b$, then ($\mathrm{NLS_b}$) is local well-posed.

(2) If $a=-1$ and $0\leq q<q^b$, then ($\mathrm{NLS_b}$) is global
well-posed.

(3) If $a=1$ and  $0\leq q<q_b$, then the solution to
($\mathrm{NLS_b}$) exists globally.

(4) Assume that  $a=1$ and  $q=q_b$. If
$\|u_0\|_{L^2}<\|\tilde{Q}_{b\wedge 0}\|_{L^2}$, then the solution
$u$ exists globally and $\sup_{t\in \mathbb{R}}\|u\|_{H^1}<\infty$;
If $\tilde{E}_b(u_0)<0$ and  either $xu_0\in L^2(\mathbb{R}^N)$ or
$u_0$ is radial, then the solution blows up in finite time.

(5)  Assume that $a=1$,  $q_b<q<q^b$ and
$\tilde{E}_b(u_0)M(u_0)^{\tilde{\sigma}}<\tilde{H}(b)$. If
$\|u_0\|_{\dot{H}_b^1}\|u_0\|_{L^2}^{\tilde{\sigma}}<\tilde{K}(b)$,
then the solution $u$ exists globally and
$$\|u(t)\|_{\dot{H}_b^1}\|u(t)\|_{L^2}^{\tilde{\sigma}}<\tilde{K}(b)$$
for any $t\in \mathbb{R}$; If
$\|u_0\|_{\dot{H}_b^1}\|u_0\|_{L^2}^{\tilde{\sigma}}>\tilde{K}(b)$
and either $xu_0\in L^2(\mathbb{R}^N)$ or $u_0$ is radial, then the
solution $u$ blows up in finite time and
$$\|u(t)\|_{\dot{H}_b^1}\|u(t)\|_{L^2}^{\tilde{\sigma}}>\tilde{K}(b)$$
for any $t$ in the existence time.

For equation ($\mathrm{CH_b}$), when $b=0$, ($\mathrm{CH_0}$) is
space-translation invariant. When $p=2$, ($\mathrm{CH_0}$) is called
the Hartree equation. In this case, the local well-posedness and
asymptotic behavior of the solutions were established in
\cite{Cazenave 2003} and \cite{Ginibre-Velo 1980}. The global
well-posedness and scattering for the defocusing energy-critical
problem were discussed by Miao et al. \cite{Miao-Xu 2007}. The
dynamics of the blowup solutions with minimal mass for the focusing
mass-critical problem were investigated by Miao et al. \cite{Miao-Xu
2010}. When $N\geq 3$, $\alpha=2$, $2\leq p<(N+\alpha)/(N-2)$, Genev
and Venkov \cite{Genev-Venkov 2012} studied
 the local and global
well-posedness, the existence of standing waves, the existence of
blowup solutions, and the dynamics of the blowup solutions in the
mass-critical case. For the general case $0 < \alpha < N$ and $2\leq
p < (N+\alpha)/(N-2)$,  Chen and Guo \cite{Chen-Guo 2007} studied
the existence of blowup solutions and the strong instability of
standing waves. Bonanno et al. \cite{Bonanno-d'Avenia 2014}
investigated the soliton dynamics. Feng and Yuan \cite{Feng-Yuan
2015} studied  the local and global well-posedness, finite time
blowup and the dynamics of blowup solutions. More precisely,
\cite{Feng-Yuan 2015} obtained the
following result.\\
\textbf{Theorem B}. Let $N\geq 3$, $(N-4)_+<\alpha<N$, $2\leq
p<(N+\alpha)/(N-2)$, $a=\pm 1$ and $u_0\in H^1(\mathbb{R}^N)$.

(1) ($\mathrm{CH_0}$) is local well-posed.

(2) If one of the following cases hold, (i) $a=-1$; (ii) $a=1$ and
$2\leq p<p_b$; (iii) $a=1$, $p=p_b$ and
$\|u_0\|_{L^2}<\|Q_0\|_{L^2}$, where $Q_0$ is a radial ground state
to (\ref{e3.5}) with $b=0$; Then the solution to ($\mathrm{CH_0}$)
exists globally.

(3) If $\max\{p_b,2\}\leq p<p^b$, $xu_0\in L^2(\mathbb{R}^N)$ and
one of the following cases hold, (i) $E_0(u_0)<0$; (ii) $E_0(u_0)=0$
and $\mathrm{Im}\int_{\mathbb{R}^N}\overline{u_0}x\cdot \nabla u_0
dx<0$; (iii) $E_0(u_0)>0$ and
$\mathrm{Im}\int_{\mathbb{R}^N}\overline{u_0}x\cdot \nabla u_0
dx<-\sqrt{2E_0(u_0)}\|xu_0\|_{L^2}$; Then the solution to
($\mathrm{CH_0}$) blows up in finite time.

When $b\neq0$, equation ($\mathrm{CH_b}$) is not space-translation
invariant anymore. It is known  that
$\dot{W}_b^{\gamma,q}(\mathbb{R}^N)$ is not equivalent to
$\dot{W}^{\gamma,q}(\mathbb{R}^N)$ for general $\gamma$ and $q$ (see
\cite{Killip-Miao et al 2017}), which restricts the application of
Strichartz estimates on the study of the local well-posedness and
scattering of global solutions (see \cite{Lu-Miao 2018} for the
study of the Schr\"{o}dinger equation with an inverse-square
potential). Fortunately, Okazawa et al. \cite{Okazawa et al. 2012}
formulated an improved energy method to treat equation with an
inverse-square potential. Based of the abstract theorem established
in \cite{Okazawa et al. 2012}, there has been great progress for
 equation ($\mathrm{CH_b}$) with $p=2$. For example, Okazawa et al. \cite{Okazawa et al. 2012}
obtained the local well-posedness result when $(N-4)_+\leq \alpha<N$
and $a=\pm 1$, and further the global existence result when $a=-1$
or $a=1$ and $\alpha\geq (N-2)_+$. Suzuki  studied the scattering of
the global solution when $a=-1$ and $\alpha\in(N-2,N-1)$ in
\cite{Suzuki 2017} and  the blowup result for initial value $u_0\in
H^1(\mathbb{R}^N)$ with $xu_0\in L^2(\mathbb{R}^N)$ and $E_b(u_0)<0$
in \cite{Suzuki 2014}. To our knowledge, there are not any results
for ($\mathrm{CH_b}$) with general exponent $p\neq2$.

By comparing the results for ($\mathrm{CH_b}$) and
($\mathrm{NLS_b}$), we see that the whole picture of
($\mathrm{CH_b}$) is far to be completed, even in the
 case $b=0$. For example, the sharp
global existence/blowup dichotomy in the inter-critical case and the
blowup result for radial initial values remain open. So in this
paper, we study the local well-posedness, global existence and
blowup dichotomy for equation ($\mathrm{CH_b}$) with general $p$ and
$\alpha$, and expect to obtain similar results to
($\mathrm{NLS_b}$).

Before stating our main results, we make some notations. Define
\begin{equation*}
\sigma:=\frac{N+\alpha-Np+2p}{Np-N-\alpha-2},
\end{equation*}
\begin{equation*}
\begin{split}
H(b):=E_{{b\wedge0}}(Q_{b\wedge0})\|Q_{b\wedge0}\|_{L^2}^{2\sigma},&\
K(b):=\|Q_{b\wedge0}\|_{\dot{H}_{b\wedge0}^1}\|Q_{b\wedge0}\|_{L^2}^{\sigma},\\
H(b,\mathrm{rad}):=E_{b}(Q_{b,\mathrm{rad}})\|Q_{b,\mathrm{rad}}\|_{L^2}^{2\sigma},&\
K(b,\mathrm{rad})
:=\|Q_{b,\mathrm{rad}}\|_{\dot{H}_b^1}\|Q_{b,\mathrm{rad}}\|_{L^2}^{\sigma},
\end{split}
\end{equation*}
where $Q_{b}$  with $-(N-2)^2/4<b\leq 0$ is the radial ground state
to the elliptic equation
\begin{equation}\label{e3.5}
\mathcal{L}_bQ+Q=(I_\alpha\ast|Q|^p)|Q|^{p-2}Q
\end{equation}
and $Q_{b,\mathrm{rad}}$ with $b>0$ is the radial solution to
(\ref{e3.5}) obtained in Section 4.

We begin by defining solutions to $\mathrm{(CH_b)}$.

\begin{definition}\label{def1.1}
Let $I\subset \mathbb{R}$ be an open interval containing $0$  and
$u_0\in H^1(\mathbb{R}^N)$. We call $u: I\times \mathbb{R}^N\to
\mathbb{C}$ a weak solution to $\mathrm{(CH_b)}$ if it belongs to
$L^{\infty}(K,H^1(\mathbb{R}^N))\cap
W^{1,\infty}(K,H^{-1}(\mathbb{R}^N))$ and satisfies
$\mathrm{(CH_b)}$ in the sense of
$L^{\infty}(K,H^{-1}(\mathbb{R}^N))$ for any compact $K\subset I$.
Moreover, if $u\in C(I,H^1(\mathbb{R}^N))\cap
C^{1}(I,H^{-1}(\mathbb{R}^N))$, we call it a solution to
$\mathrm{(CH_b)}$. We call $I$ the lifespan of $u$. We call $u$ a
maximal-lifespan solution if it cannot be extended to a strictly
larger interval. We call $u$ global if $I=\mathbb{R}$ and blowup in
finite time if $I\neq \mathbb{R}$.
\end{definition}

The  main results of this paper are as follows.

\begin{theorem}\label{thm local}
Let $N\geq 3$, $b>-(N-2)^2/4$ and $b\neq0$, $\alpha\in((N-4)_+,N)$,
$2\leq p<p^b$ and $a=\pm1$. Then for any $u_0\in H^1(\mathbb{R}^N)$,
there exists  a unique maximal-lifespan  solution $u\in
C(I;H^1(\mathbb{R}^N))\cap C^1(I;H^{-1}(\mathbb{R}^N))$ to
$\mathrm{(CH_b)}$. Moreover, $u$ satisfies the conservation laws
\begin{equation*}
M(u(t))=M(u_0),\ \ E_b(u(t))=E_b(u_0),\ \mathrm{for\ any\ }t\in I,
\end{equation*}
where $M$ and $E_b$ are defined in (\ref{e1.2}) and (\ref{e1.3}),
respectively.
\end{theorem}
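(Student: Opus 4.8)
The plan is to follow the abstract energy method of Okazawa et al. \cite{Okazawa et al. 2012}, which was devised precisely to treat Schr\"{o}dinger-type equations driven by the inverse-square operator $\mathcal{L}_b$ in the regime where the usual Strichartz machinery is unavailable. Under the hypothesis $b>-(N-2)^2/4$, the Hardy inequality \eqref{e1.1} makes $\mathcal{L}_b$ a nonnegative self-adjoint operator on $L^2(\mathbb{R}^N)$, so it generates a unitary group $e^{-it\mathcal{L}_b}$; thanks to the norm equivalence $\|\cdot\|_{\dot{H}_b^1}\sim\|\nabla\cdot\|_{L^2}$ recorded in the introduction, this group acts as a strongly continuous group on the triple $H^1\hookrightarrow L^2\hookrightarrow H^{-1}$. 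I recast $\mathrm{(CH_b)}$ through Duhamel's formula
\begin{equation*}
u(t)=e^{-it\mathcal{L}_b}u_0+ia\int_0^t e^{-i(t-s)\mathcal{L}_b}\bigl(I_\alpha*|u(s)|^p\bigr)|u(s)|^{p-2}u(s)\,ds,
\end{equation*}
and obtain a maximal-lifespan solution by verifying the hypotheses of the abstract local well-posedness theorem of \cite{Okazawa et al. 2012}, for which the whole content reduces to suitable mapping properties of the nonlinearity.

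Writing $f(u)=(I_\alpha*|u|^p)|u|^{p-2}u$, the core step is to show that $f$ sends $H^1(\mathbb{R}^N)$ boundedly into $H^{-1}(\mathbb{R}^N)$ and is Lipschitz on bounded subsets of $H^1$ into $H^{-1}$. The nonlocal factor is governed by the Hardy--Littlewood--Sobolev inequality: choosing exponents so that $|u|^p\in L^{2N/(N+\alpha)}$, the product $I_\alpha*|u|^p$ lies in the Lebesgue space dual to the one carrying $|u|^{p-1}$, after which H\"{o}lder's inequality and the Sobolev embedding $H^1\hookrightarrow L^q$ for $2\le q\le 2N/(N-2)$ close the estimate. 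The constraints $2\le p<p^b=(N+\alpha)/(N-2)$ and $\alpha>(N-4)_+$ are exactly what render every intervening exponent admissible, while $p\ge 2$ simultaneously guarantees that $s\mapsto|s|^{p-2}s$ is continuously differentiable with the correct growth, so that $f$ is genuinely Lipschitz and not merely bounded.

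With these estimates in hand, the abstract theorem furnishes a unique solution $u\in C(I;H^1)\cap C^1(I;H^{-1})$ on a maximal time interval $I$, and a standard continuation/uniqueness argument pins down $I$ as the maximal lifespan. For the conservation laws I would first establish them along the regularized solutions used in the construction, where the formal manipulations are legitimate. Mass conservation then follows by pairing the equation with $u$ and using that $\langle\mathcal{L}_b u,u\rangle$ and $\langle f(u),u\rangle=\int(I_\alpha*|u|^p)|u|^p$ are both real, so that both contributions to $\langle\partial_t u,u\rangle$ are purely imaginary; energy conservation follows from the variational identity $f(u)=G'(u)$ with $G(u)=\tfrac{1}{2p}\int(I_\alpha*|u|^p)|u|^p$, together with the symmetry of the Riesz kernel. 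Passing to the limit and invoking $u\in C(I;H^1)$ transfers both identities to the genuine solution.

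The step I expect to be the main obstacle is the Lipschitz bound for $f$ measured in the $H^{-1}$ norm. Because $f$ couples a nonlocal factor to a pointwise nonlinearity, the difference $f(u)-f(v)$ must be split so that $I_\alpha*(|u|^p-|v|^p)$ and $|u|^{p-2}u-|v|^{p-2}v$ are each controlled by the $H^1$ norms of $u$ and $v$, and keeping the homogeneity balanced while checking that $\alpha>(N-4)_+$ prevents the relevant Hardy--Littlewood--Sobolev exponents from leaving the admissible range is the delicate bookkeeping. A related technical point is energy conservation: since the genuine solution only satisfies $\partial_t u\in C(I;H^{-1})$, the pairing $\langle\mathcal{L}_b u,\partial_t u\rangle$ is a priori ill-defined, so the conservation of $E_b$ must be extracted from the regularized problems rather than differentiated directly. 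Throughout, the inverse-square potential enters only through the systematic replacement of $\|\nabla\cdot\|_{L^2}$ by the equivalent $\|\cdot\|_{\dot{H}_b^1}$, so no new difficulty arises from $b\ne 0$ beyond this identification.
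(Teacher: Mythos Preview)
Your overall strategy---invoke the abstract energy method of Okazawa--Suzuki--Yokota \cite{Okazawa et al. 2012} with $S=\mathcal{L}_b$, $X_S=H^1$, and verify its hypotheses for the Choquard nonlinearity---is exactly the paper's approach. But you understate what the abstract theorem demands and you misattribute uniqueness to it. The hypotheses are the five conditions (G1)--(G5), not merely the $H^1\to H^{-1}$ Lipschitz bound (G2). In the paper (G2) is the routine step; the real work is (G1), where showing $G'=g$ in the Fr\'echet sense for $p>2$ requires a controlled remainder in the expansion of $|u+v|^p$ via an inequality of Cao--Li--Luo \cite{Cao-Li-Luo 2015}; (G3), a H\"older-type estimate $|G(u)-G(v)|\le\delta+C_\delta(M)\|u-v\|_{L^2}$ in which the difference must be controlled in $L^2$ rather than $H^1$, forcing a splitting of the Riesz kernel $|x-y|^{-(N-\alpha)}$ into a bounded piece $k_R$ and a remainder $l_R$ small in $L_y^\infty L_x^{\alpha_1}$; and (G5), a closedness-under-weak-limits condition. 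Your proposal touches only (G1), (G2), (G4), and singles out (G2) as the main obstacle when it is in fact the easiest of the nontrivial checks.

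More seriously, the abstract theorem does \emph{not} deliver uniqueness. It first produces a weak solution $u\in L^\infty(I_M;H^1)\cap W^{1,\infty}(I_M;H^{-1})$ with mass conservation, and only \emph{after} uniqueness of weak solutions has been supplied as an independent input does it upgrade to $u\in C(I;H^1)\cap C^1(I;H^{-1})$ with energy conservation. The paper proves uniqueness separately via Strichartz estimates for $e^{-it\mathcal{L}_b}$ (Proposition~\ref{pro 2.2}): with the admissible pair $(p_1,q_1)=\bigl(\tfrac{4p}{Np-(N+\alpha)},\tfrac{2Np}{N+\alpha}\bigr)$ and the pointwise estimate $\|g(u)-g(v)\|_{L^{q_1'}}\lesssim(\|u\|_{L^{q_1}}\vee\|v\|_{L^{q_1}})^{2p-2}\|u-v\|_{L^{q_1}}$, a short-time contraction in $L_t^{p_1}L_x^{q_1}$ closes precisely because $p<p^b$ makes the time exponent strictly positive. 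Without this Strichartz step neither uniqueness nor the conservation of $E_b$ follows, so your sketch as written has a genuine gap at exactly the point where the theorem's conclusions are claimed.
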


\begin{theorem}\label{th1.2}
Let $N\geq 3$, $b>-(N-2)^2/4$ and $b\neq0$, $\alpha\in((N-4)_+,N)$
and $u\in C(I;H^1(\mathbb{R}^N))\cap C^1(I;H^{-1}(\mathbb{R}^N))$ be
the maximal-lifespan solution to $\mathrm{(CH_b)}$ with initial
value $u_0\in H^1(\mathbb{R}^N)$. If one of the following conditions
hold:

(i)  $a=-1$ and  $2\leq p<p^b$;

(ii) $a=1$ and $2\leq p<p_b$;\\
Then $u$ exists globally and $\sup_{t\in
\mathbb{R}}\|u\|_{H^1}<\infty.$
\end{theorem}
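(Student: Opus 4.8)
My plan is to derive a uniform a priori bound on $\|u(t)\|_{H^1(\mathbb{R}^N)}$ on the maximal lifespan $I$ and then invoke the continuation criterion coming from the local theory. By Theorem \ref{thm local} the maximal-lifespan solution exists, is unique, and conserves both $M$ and $E_b$; moreover the local existence time furnished by that theorem depends only on $\|u_0\|_{H^1}$, so one has the standard blowup alternative: if $I=(T_-,T_+)$ with $T_+<\infty$ (resp. $T_->-\infty$) then $\|u(t)\|_{H^1}\to\infty$ as $t\to T_+$ (resp. $t\to T_-$). Hence it suffices to show $\sup_{t\in I}\|u(t)\|_{H^1}<\infty$; this simultaneously yields $I=\mathbb{R}$ and the stated uniform bound. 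Throughout I will use the norm equivalence $\|v\|_{\dot H_b^1}^2=Q(v)\sim\|\nabla v\|_{L^2}^2$ recorded after \eqref{e1.1}, writing $c_1\|\nabla v\|_{L^2}^2\le\|v\|_{\dot H_b^1}^2$, and mass conservation $\|u(t)\|_{L^2}^2=M(u_0)$.

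The key estimate is a Gagliardo--Nirenberg-type bound for the nonlocal term $\mathcal{G}(u):=\int_{\mathbb{R}^N}(I_\alpha\ast|u|^p)|u|^p\,dx$. By the Hardy--Littlewood--Sobolev inequality, $\mathcal{G}(u)\le C\|u\|_{L^{q}}^{2p}$ with $q=\tfrac{2Np}{N+\alpha}$, and the condition $p<p^b$ guarantees $q<\tfrac{2N}{N-2}$, so the Sobolev interpolation $\|u\|_{L^q}\le C\|\nabla u\|_{L^2}^{s}\|u\|_{L^2}^{1-s}$ holds with $s=\tfrac{N}{2}-\tfrac{N+\alpha}{2p}\in(0,1)$. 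Combining these gives
\begin{equation*}
\mathcal{G}(u)\le C\,\|\nabla u\|_{L^2}^{\,Np-N-\alpha}\,\|u\|_{L^2}^{\,N+\alpha+2p-Np}.
\end{equation*}
A direct computation shows the exponent of $\|\nabla u\|_{L^2}$ equals $Np-N-\alpha$, which is $\ge 0$ and, crucially, satisfies $Np-N-\alpha<2$ precisely when $p<p_b=1+(2+\alpha)/N$.

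In the defocusing case (i) with $a=-1$ the argument is immediate: both terms in $E_b$ are nonnegative, so energy conservation gives $\tfrac12\|u(t)\|_{\dot H_b^1}^2\le E_b(u(t))=E_b(u_0)$, whence $\|\nabla u(t)\|_{L^2}^2\le 2E_b(u_0)/c_1$ uniformly in $t$; together with mass conservation this bounds $\|u(t)\|_{H^1}$. For the focusing mass-subcritical case (ii) with $a=1$ and $2\le p<p_b$, I use energy conservation in the form $\tfrac12\|u(t)\|_{\dot H_b^1}^2=E_b(u_0)+\tfrac1{2p}\mathcal{G}(u(t))$ and insert the displayed estimate with $\|u(t)\|_{L^2}=M(u_0)^{1/2}$ fixed. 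This yields
\begin{equation*}
\tfrac{c_1}{2}\|\nabla u(t)\|_{L^2}^2\le E_b(u_0)+C\,M(u_0)^{\frac{N+\alpha+2p-Np}{2}}\,\|\nabla u(t)\|_{L^2}^{\,Np-N-\alpha}.
\end{equation*}
Since $Np-N-\alpha<2$, Young's inequality with exponents $\tfrac{2}{Np-N-\alpha}$ and its conjugate absorbs the last term into $\tfrac{c_1}{4}\|\nabla u(t)\|_{L^2}^2$, leaving a bound on $\|\nabla u(t)\|_{L^2}$ depending only on the conserved quantities $E_b(u_0)$ and $M(u_0)$.

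The only genuinely delicate point is the subcritical threshold: one must verify that the interpolation exponent $s$ lies strictly in $(0,1)$ and that $Np-N-\alpha<2\iff p<p_b$, so that the $\|\nabla u\|_{L^2}$-power produced by $\mathcal{G}$ is genuinely subquadratic and Young's inequality closes the estimate; at $p=p_b$ this power equals exactly $2$ and the absorption fails (consistent with the mass-critical dichotomy governed by $M(u_0)$ versus $\|Q_{b\wedge0}\|_{L^2}$ rather than by unconditional global existence). The remaining ingredients—the HLS bound, the Sobolev interpolation, and the continuation argument—are routine once the local theory of Theorem \ref{thm local} is in hand.
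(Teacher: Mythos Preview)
Your proposal is correct and follows essentially the same route as the paper's proof in Section~4.2: bound $\|u(t)\|_{\dot H_b^1}$ via energy conservation, the Gagliardo--Nirenberg-type inequality for the nonlocal term, and Young's inequality exploiting the subquadratic exponent $Np-N-\alpha<2$ when $p<p_b$. The only cosmetic differences are that the paper cites the sharp inequality~\eqref{e3.1} in terms of $\|\cdot\|_{\dot H_b^1}$ directly (rather than deriving it via HLS plus Sobolev interpolation and then invoking the norm equivalence as you do), and that you spell out the blowup alternative explicitly whereas the paper leaves it implicit.
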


\begin{theorem}\label{thm1.3}
Let $N\geq 3$, $b>-(N-2)^2/4$ and $b\neq0$, $\alpha\in((N-4)_+,N)$,
$a=1$,  $p=p_b$ and $u\in C(I;H^1(\mathbb{R}^N))\cap
C^1(I;H^{-1}(\mathbb{R}^N))$ be the maximal-lifespan solution to
$\mathrm{(CH_b)}$ with initial value $u_0\in H^1(\mathbb{R}^N)$.

(i) If $\|u_0\|_{L^2}<\|Q_{b\wedge0}\|_{L^2}$, then $u$ exists
globally and $\sup_{t\in \mathbb{R}}\|u\|_{H^1}<\infty;$

(ii) If $E_b(u_0)<0$ and $xu_0\in L^2(\mathbb{R}^N)$, then $u$ blows
up in finite time.
\end{theorem}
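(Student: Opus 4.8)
\emph{Overview.} The plan is to treat the two statements separately, both relying on the conservation laws from Theorem~\ref{thm local} together with the blowup alternative furnished by the local theory.

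For part (i) I would start from the sharp Gagliardo--Nirenberg inequality adapted to the nonlocal nonlinearity in the mass-critical regime $p=p_b$ (so that $Np-N-\alpha=2$), namely
\begin{equation*}
\int_{\mathbb{R}^N}(I_\alpha\ast|u|^p)|u|^p\,dx\leq C_{\mathrm{GN}}\,\|u\|_{\dot{H}_b^1}^2\,\|u\|_{L^2}^{2(p-1)}.
\end{equation*}
When $b\leq0$ the optimizer is the radial ground state $Q_b$, and combining the Nehari identity $\|Q_b\|_{\dot{H}_b^1}^2+\|Q_b\|_{L^2}^2=\int(I_\alpha\ast|Q_b|^p)|Q_b|^p$ with the Pohozaev identity obtained by differentiating $E_b(\lambda^{N/2}Q_b(\lambda\,\cdot))$ along the $L^2$-invariant scaling at $\lambda=1$ yields $\int(I_\alpha\ast|Q_b|^p)|Q_b|^p=p\|Q_b\|_{\dot{H}_b^1}^2$, whence $C_{\mathrm{GN}}/p=\|Q_b\|_{L^2}^{-2(p-1)}$. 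When $b>0$ the potential is repulsive and $\|u\|_{\dot{H}_b^1}\geq\|\nabla u\|_{L^2}$, so I would simply compare with the $b=0$ inequality; in either case the relevant threshold is $\|Q_{b\wedge0}\|_{L^2}$. Substituting into the energy gives
\begin{equation*}
E_b(u)\geq\tfrac12\|u\|_{\dot{H}_b^1}^2\Big(1-\big(\|u\|_{L^2}/\|Q_{b\wedge0}\|_{L^2}\big)^{2(p-1)}\Big),
\end{equation*}
and since $\|u(t)\|_{L^2}=\|u_0\|_{L^2}<\|Q_{b\wedge0}\|_{L^2}$ the bracket is a fixed positive constant $\delta>0$, so $\|u(t)\|_{\dot{H}_b^1}^2\leq 2E_b(u_0)/\delta$ uniformly in $t$. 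Together with mass conservation this bounds $\|u(t)\|_{H^1}$ uniformly, and the blowup alternative forces $I=\mathbb{R}$.

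For part (ii) I would use the virial identity for $V(t):=\int_{\mathbb{R}^N}|x|^2|u(t,x)|^2\,dx$, which is finite near $t=0$ since $xu_0\in L^2$. A direct computation gives $V'(t)=4\,\mathrm{Im}\int_{\mathbb{R}^N}\bar u\,x\cdot\nabla u\,dx$, and the second derivative is
\begin{equation*}
V''(t)=8\|u\|_{\dot{H}_b^1}^2-\frac{4(Np-N-\alpha)}{p}\int_{\mathbb{R}^N}(I_\alpha\ast|u|^p)|u|^p\,dx,
\end{equation*}
where the inverse-square term enters cleanly because $b|x|^{-2}$ is homogeneous of degree $-2$, hence scale-covariant of the same weight as the kinetic term. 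In the mass-critical case $Np-N-\alpha=2$, so eliminating the nonlocal term via the definition of $E_b$ collapses this to $V''(t)=16\,E_b(u(t))=16\,E_b(u_0)$ by energy conservation. Since $E_b(u_0)<0$, the function $V(t)=V(0)+V'(0)t+8E_b(u_0)t^2$ is a downward parabola that becomes negative in finite time; as $V\geq0$ this is impossible unless the lifespan is finite, giving finite-time blowup.

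\emph{Main obstacle.} The principal difficulty is the rigorous justification of the virial identity for merely $H^1$ weak solutions in the presence of the singular potential: the formal computation presumes regularity and decay the solution need not possess, so one must carry it out on regularized data or against a smooth spatial cutoff $\phi_R$ and then pass to the limit, simultaneously establishing that $xu(t)\in L^2$ persists on the lifespan. This step, already delicate for $\mathrm{(NLS_b)}$ in \cite{Suzuki 2014}, is where the singularity at the origin requires the most care. For part (i) the analogous technical point is the existence of $Q_{b\wedge0}$ and the attainment of the sharp constant, which I would import from the variational analysis of Section~4.
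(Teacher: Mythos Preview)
Your proposal is correct and follows essentially the same route as the paper: part (i) is handled via the sharp Gagliardo--Nirenberg inequality with constant $C_{GN}(b)/p=\|Q_{b\wedge0}\|_{L^2}^{-2(p-1)}$ (the paper's Lemma~\ref{lem3.1}) together with mass and energy conservation, and part (ii) via the virial identity $V''(t)=16E_b(u_0)$ (the paper's Lemma~\ref{lem6.2}) and Glassey's convexity argument. You have also correctly identified the main technical obstacle---the rigorous derivation of the virial identity for $H^1$ solutions in the presence of the inverse-square potential---which the paper resolves exactly as you suggest, by passing through the regularized problem $\mathrm{(CH_b^\delta)}$ (Lemmas~\ref{lem5.1} and~\ref{lem6.1}).
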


\begin{remark}\label{rem1.1}
(1). The condition $E_b(u_0)<0$ in Theorem \ref{thm1.3} (ii) is a
sufficient  but  not necessary condition, see the proof in Section
5.

(2). Since we can not estimate the nonlocal nonlinearity in the
local virial identity, the case $E_b(u_0)<0$ and $u_0\in
H_r^1(\mathbb{R}^N)$ in Theorem \ref{thm1.3} (ii) is left  open.

(3). If $-\frac{(N-2)^2}{4}<b<0$, then
\begin{equation*}
u_T(t,x)=\frac{1}{(T-t)^{\frac{N}{2}}}e^{\frac{i}{T-t}-\frac{i|x|^2}{4(T-t)}}Q_{b}\left(\frac{x}{T-t}\right),\
T>0
\end{equation*}
is a solution to $\mathrm{(CH_b)}$ which blows up at time $T$ and
$\|u_T(0)\|_{L^2}=\|Q_{b}\|_{L^2}$.

(4). In the case $b>0$, in view of the radial sharp
Gagliardo-Nirenberg inequality, similarly to the proof of Theorem
\ref{thm1.3} (i), we can show that if $u_0\in H_r^1(\mathbb{R}^N)$
and $\|u_0\|_{L^2}<\|Q_{b,\mathrm{rad}}\|_{L^2}$, then the solution
to $\mathrm{(CH_b)}$ exists globally. Moreover, it is easy to show
that
\begin{equation*}
u_T(t,x)=\frac{1}{(T-t)^{\frac{N}{2}}}e^{\frac{i}{T-t}-\frac{i|x|^2}{4(T-t)}}Q_{b,\mathrm{rad}}\left(\frac{x}{T-t}\right),\
T>0
\end{equation*}
is a solution to $\mathrm{(CH_b)}$ which blows up at time $T$ and
$\|u_T(0)\|_{L^2}=\|Q_{b,\mathrm{rad}}\|_{L^2}$.
\end{remark}

\begin{theorem}\label{thm1.4}
Let $N\geq 3$, $b>-(N-2)^2/4$ and $b\neq0$, $\alpha\in((N-4)_+,N)$,
$a=1$,  $p_b<p<p^b$ and $u\in C(I;H^1(\mathbb{R}^N))\cap
C^1(I;H^{-1}(\mathbb{R}^N))$ be the maximal-lifespan solution to
$\mathrm{(CH_b)}$ with initial value $u_0\in H^1(\mathbb{R}^N)$ and
$E_b(u_0)\|u_0\|_{L^2}^{2\sigma}<H(b)$.

(i) If $\|u_0\|_{\dot{H}_b^1}\|u_0\|_{L^2}^{\sigma}<K(b)$, then $u$
exists globally and
$$\|u(t)\|_{\dot{H}_b^1}\|u(t)\|_{L^2}^{\sigma}<K(b)\ \mathrm{for\ any\ }t\in \mathbb{R};$$

(ii)  If $\|u_0\|_{\dot{H}_b^1}\|u_0\|_{L^2}^{\sigma}>K(b)$, and
either $xu_0\in L^2(\mathbb{R}^N)$ or $u_0$ is radial(in this case,
we further assume that $p< \frac{2N+6}{N+1}$), then $u$ blows up in
finite time and
$$\|u(t)\|_{\dot{H}_b^1}\|u(t)\|_{L^2}^{\sigma}>K(b)\ \mathrm{for\ any\ }t\in I.$$
\end{theorem}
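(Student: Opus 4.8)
The plan is to run the concentration/virial dichotomy below the ground state (the same scheme underlying Theorem A(5)), adapted to the nonlocal term. The engine is a sharp Gagliardo--Nirenberg (GN) inequality
$$\int_{\mathbb{R}^N}(I_\alpha*|u|^p)|u|^p\,dx\le C_{GN}\,\|u\|_{\dot{H}_{b\wedge0}^1}^{\,Np-N-\alpha}\,\|u\|_{L^2}^{\,N+\alpha-Np+2p},$$
whose exponents are forced by the scaling computed in the introduction and whose optimiser is the radial ground state $Q_{b\wedge0}$ of (\ref{e3.5}). First I would record the Nehari and Pohozaev identities for $Q_{b\wedge0}$; these pin down $C_{GN}$ in terms of $\|Q_{b\wedge0}\|_{\dot{H}_{b\wedge0}^1}$ and $\|Q_{b\wedge0}\|_{L^2}$, and, after multiplying the GN inequality by $\|u\|_{L^2}^{2\sigma}$, identify $K(b)$ and $H(b)$ as the unique maximiser and the maximum value of the scalar function
$$g(y)=\tfrac12 y^2-\frac{C_{GN}}{2p}\,y^{\,Np-N-\alpha},\qquad \max_{y>0}g(y)=g(K(b))=H(b);$$
note $Np-N-\alpha>2$ exactly because $p>p_b$, so $g$ has a strict interior maximum, and $g'(K(b))=0$ yields the identity $H(b)=\frac{Np-N-\alpha-2}{2(Np-N-\alpha)}K(b)^2$ that will drive the coercivity below. (For $b>0$ one uses $\|u\|_{\dot{H}^1}\le\|u\|_{\dot{H}_b^1}$ to run the same inequality with the $b=0$ constant, consistent with $b\wedge0=0$ in the definitions.)

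Feeding mass and energy conservation from Theorem \ref{thm local} into this inequality gives, for every $t\in I$,
$$g\!\left(\|u(t)\|_{\dot{H}_b^1}\|u(t)\|_{L^2}^{\sigma}\right)\le E_b(u(t))\|u(t)\|_{L^2}^{2\sigma}=E_b(u_0)\|u_0\|_{L^2}^{2\sigma}<H(b)=\max_{y>0}g(y).$$
Since $t\mapsto\|u(t)\|_{\dot{H}_b^1}\|u(t)\|_{L^2}^{\sigma}$ is continuous on $I$ (as $u\in C(I;H^1)$) and can never attain the maximiser $K(b)$, an intermediate-value argument forces it to remain strictly on whichever side of $K(b)$ its initial value lies. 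This proves the invariance assertions in both (i) and (ii); moreover, the strict gap $E_b(u_0)\|u_0\|_{L^2}^{2\sigma}<H(b)$ upgrades to quantitative constants $\delta_0,\delta_1>0$ with $\|u(t)\|_{\dot{H}_b^1}\|u(t)\|_{L^2}^{\sigma}\le(1-\delta_1)K(b)$ in case (i) and $\ge(1+\delta_1)K(b)$ in case (ii).

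In case (i) the gap together with $\|u(t)\|_{L^2}=\|u_0\|_{L^2}$ bounds $\|u(t)\|_{\dot{H}_b^1}$, hence $\|u(t)\|_{H^1}$, uniformly on $I$; the blow-up alternative from the local theory then forces $I=\mathbb{R}$. For the finite-variance part of (ii) I would use the virial identity with $V(t)=\int|x|^2|u|^2\,dx$: because $|x|^{-2}$ is homogeneous of degree $-2$ the potential scales like the kinetic term, giving
$$V''(t)=8\|u(t)\|_{\dot{H}_b^1}^2-\frac{4(Np-N-\alpha)}{p}\int_{\mathbb{R}^N}(I_\alpha*|u|^p)|u|^p\,dx,$$
which, after eliminating the nonlinear term via energy conservation, becomes $V''(t)=8(Np-N-\alpha)E_b(u_0)-4(Np-N-\alpha-2)\|u(t)\|_{\dot{H}_b^1}^2$. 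Inserting $E_b(u_0)\|u_0\|_{L^2}^{2\sigma}\le(1-\delta_0)H(b)$ and the lower gap $\|u(t)\|_{\dot{H}_b^1}^2\|u_0\|_{L^2}^{2\sigma}\ge(1+\delta_1)^2K(b)^2$ and using $H(b)=\frac{Np-N-\alpha-2}{2(Np-N-\alpha)}K(b)^2$ yields $V''(t)\le-c<0$ uniformly; thus $V$ would become negative in finite time, which is impossible, forcing $I\neq\mathbb{R}$.

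The hard part will be the radial case of (ii), where $xu_0\notin L^2$ and one must replace $V$ by a localised virial $V_R(t)=\int\chi_R(x)|u|^2\,dx$ with a cutoff $\chi_R\approx|x|^2$ on $\{|x|\le R\}$. Differentiating produces boundary errors, and the genuinely new difficulty absent for $\mathrm{(NLS_b)}$ is the nonlocal error from the double integral $\int(I_\alpha*|u|^p)|u|^p$ localised outside $\{|x|\le R\}$, which does not split cleanly across the convolution. I expect to control it through the radial decay $|u(x)|\lesssim|x|^{-(N-1)/2}\|u\|_{L^2}^{1/2}\|\nabla u\|_{L^2}^{1/2}$ combined with a Hardy--Littlewood--Sobolev estimate on the tail; the restriction $p<\frac{2N+6}{N+1}$ is precisely what makes these tail contributions integrable and of strictly lower order than the main coercive term, so that $V_R''(t)\le-c<0$ persists for $R$ large and finite-time blow-up again follows. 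As Remark \ref{rem1.1}(2) emphasises for Theorem \ref{thm1.3}, this nonlocal localisation is the crux; here it is the positivity furnished by the trapping strictly above $K(b)$ that lets the estimate close despite it.
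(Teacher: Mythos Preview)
Your proposal is correct and follows essentially the same route as the paper: the scalar function $g$ (the paper's $f$), the continuity trapping argument for both (i) and (ii), the rewriting of $V''$ via energy conservation and the identity $H(b)=\frac{Np-N-\alpha-2}{2(Np-N-\alpha)}K(b)^2$, and the localized virial with radial Strauss decay for the radial case---all match the paper's Section~4.2 Case~3 and Section~5.2. The one point you take for granted but which the paper treats as a genuine difficulty is the \emph{derivation} of the virial identities themselves: because the potential $b|x|^{-2}$ is singular and solutions are only $H^1$ (not $H^2$), the paper cannot differentiate $V(t)$ directly and instead establishes Lemmas~\ref{lem6.2} and~\ref{lem6.3} by first proving them for the regularized problem $\mathrm{(CH_b^\delta)}$ with $b/(|x|^2+\delta)$ and then passing to the limit $\delta\to0$ via an energy-method compactness argument; your sketch should acknowledge that this step requires work.
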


\begin{remark}\label{rmk1.1}
In Theorem \ref{thm1.4} (ii), when $u_0\in H_r^1(\mathbb{R}^N)$, the
restriction $p<\frac{2N+6}{N+1}$ is added for the following reason:
Roughly speaking, the nonlocal nonlinearity is of order $|u|^{2p}$,
so it can be controlled by $\|u\|_{\dot{H}_b^1}^2$ only if $p$ is in
a subset of $2\leq p<p^b$, see (\ref{e8.11}) and (\ref{e8.10}).
\end{remark}

In view of the radial sharp Gagliardo-Nirenberg inequality,
similarly to the proof of Theorem \ref{thm1.4}, we obtain the
following result.

\begin{theorem}\label{thm1.5}
Let $N\geq 3$, $\alpha\in((N-4)_+,N)$, $p_b<p<p^b$, $b>0$, $a=1$ and
$u\in C(I;H^1(\mathbb{R}^N))\cap C^1(I;H^{-1}(\mathbb{R}^N))$ be the
maximal-lifespan solution to $\mathrm{(CH_b)}$ with initial value
$u_0\in H_r^1(\mathbb{R}^N)$ and
$E_b(u_0)\|u_0\|_{L^2}^{2\sigma}<H(b,\mathrm{rad})$.

(i) If
$\|u_0\|_{\dot{H}_b^1}\|u_0\|_{L^2}^{\sigma}<K(b,\mathrm{rad})$,
then $u$ exists globally and
$$\|u(t)\|_{\dot{H}_b^1}\|u(t)\|_{L^2}^{\sigma}<K(b,\mathrm{rad})\ \mathrm{for\ any\ }t\in \mathbb{R};$$

(ii)  If
$\|u_0\|_{\dot{H}_b^1}\|u_0\|_{L^2}^{\sigma}>K(b,\mathrm{rad})$ and
$p_b<p< \frac{2N+6}{N+1}$, then $u$ blows up in finite time and
$$\|u(t)\|_{\dot{H}_b^1}\|u(t)\|_{L^2}^{\sigma}>K(b,\mathrm{rad}) \ \mathrm{for\ any\ }t\in I.$$
\end{theorem}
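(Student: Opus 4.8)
The plan is to transplant the proof of Theorem \ref{thm1.4} into the radial class, replacing the sharp Gagliardo--Nirenberg inequality attained by $Q_{b\wedge0}$ with its radial analogue attained by $Q_{b,\mathrm{rad}}$. First I would record the radial sharp Gagliardo--Nirenberg inequality (established in Section 4): for radial $v$,
\[
\int_{\mathbb{R}^N}(I_\alpha*|v|^p)|v|^p\,dx \le C_{\mathrm{rad}}\,\|v\|_{\dot{H}_b^1}^{Np-N-\alpha}\|v\|_{L^2}^{N+\alpha-Np+2p},
\]
with optimal constant $C_{\mathrm{rad}}$ attained at $Q_{b,\mathrm{rad}}$. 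Writing $s:=Np-N-\alpha$ (so that $s>2$ exactly because $p>p_b$) and setting the scaling-invariant quantity $y(t):=\|u(t)\|_{\dot{H}_b^1}\|u(t)\|_{L^2}^{\sigma}$, the precise choice of $\sigma$ makes the weighted energy inequality collapse to a function of $y$ alone,
\[
E_b(u)\,\|u\|_{L^2}^{2\sigma}\ \ge\ f(y):=\tfrac12 y^2-\tfrac{C_{\mathrm{rad}}}{2p}\,y^{s}.
\]
A direct computation using the Euler--Lagrange and Pohozaev relations for $Q_{b,\mathrm{rad}}$ identifies $K(b,\mathrm{rad})$ with the maximizer of $f$ and $H(b,\mathrm{rad})=f(K(b,\mathrm{rad}))=\max f$.

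Next I would run the trapping argument. Since mass and $E_b$ are conserved (Theorem \ref{thm local}) and $u\in C(I;H^1)$, the map $t\mapsto y(t)$ is continuous and satisfies $f(y(t))\le E_b(u_0)\|u_0\|_{L^2}^{2\sigma}<H(b,\mathrm{rad})=\max f$ for all $t\in I$. Hence $y(t)$ can never equal the maximizer $K(b,\mathrm{rad})$, so it remains on whichever side of $K(b,\mathrm{rad})$ it starts. In case (i), $y(0)<K(b,\mathrm{rad})$ forces $y(t)<K(b,\mathrm{rad})$ on $I$, which gives a uniform bound $\|u(t)\|_{\dot{H}_b^1}\le C$; combined with mass conservation this yields $\sup_t\|u(t)\|_{H^1}<\infty$, and the blowup alternative from the local theory of Theorem \ref{thm local} upgrades this to $I=\mathbb{R}$. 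In case (ii), the same continuity argument gives $y(t)>K(b,\mathrm{rad})$ for all $t\in I$.

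For the blowup in (ii) I would use a localized virial identity adapted to radial data, since for $b>0$ the natural variational object lives in $H_r^1$ and $xu_0\in L^2$ need not hold. With a radial cutoff $\psi_R$ approximating $|x|^2$ on $|x|\le R$, set $V_R(t):=\int_{\mathbb{R}^N}\psi_R(x)|u(t,x)|^2\,dx$. The key feature of the inverse-square potential is that $b|x|^{-2}$ is homogeneous of degree $-2$, so the dilation generator produces no uncontrolled potential contribution and $V_R''(t)$ equals $8\|u\|_{\dot{H}_b^1}^2-\frac{4(Np-N-\alpha)}{p}\int(I_\alpha*|u|^p)|u|^p\,dx$ plus error terms supported in $|x|\gtrsim R$. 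Using the conserved identity $\int(I_\alpha*|u|^p)|u|^p=p\|u\|_{\dot{H}_b^1}^2-2pE_b(u)$ together with $E_b(u_0)\|u_0\|_{L^2}^{2\sigma}<H(b,\mathrm{rad})$ and $y(t)>K(b,\mathrm{rad})$, the main (non-error) part is bounded above by a fixed $-\delta<0$ uniformly in $t\in I$; the gap comes from $s>2$ and the strict sub-threshold inequality. The error terms are then absorbed using radial (Strauss-type) decay estimates, at which stage one must dominate the nonlocal nonlinearity on the exterior region by a small multiple of $\|u\|_{\dot{H}_b^1}^2$; this is precisely where the restriction $p<\frac{2N+6}{N+1}$ enters (cf.\ Remark \ref{rmk1.1}), so that choosing $R$ large gives $V_R''(t)\le-\delta/2<0$. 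Since $V_R\ge0$ but $V_R''\le-\delta/2$, $V_R$ would become negative in finite time, which is impossible; hence $I$ cannot be all of $\mathbb{R}$ and $u$ blows up in finite time.

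The step I expect to be the main obstacle is controlling the nonlocal Choquard nonlinearity inside the localized radial virial. Unlike the finite-variance setting $xu_0\in L^2$, the cutoff generates boundary terms involving $(I_\alpha*|u|^p)|u|^p$ over $|x|\gtrsim R$, and because $I_\alpha$ is nonlocal these cannot be estimated purely by the local tail of $u$. The radial decay must be strong enough relative to the homogeneity $2p$ of the nonlinearity, which is exactly what forces $p<\frac{2N+6}{N+1}$ and explains why no $xu_0\in L^2$ alternative is available in this $b>0$ regime.
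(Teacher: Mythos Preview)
Your proposal follows the paper's approach exactly: the paper itself proves Theorem~\ref{thm1.5} only by the one-line remark that it follows ``similarly to the proof of Theorem~\ref{thm1.4}'' using the radial sharp Gagliardo--Nirenberg inequality, and your outline correctly fills in those details (radial Gagliardo--Nirenberg, the trapping function $f$, continuity argument for (i), localized radial virial for (ii)).

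One point in the blowup step needs tightening. You write that the main virial term is bounded above by a fixed $-\delta<0$ and that the error terms are controlled by a small multiple of $\|u\|_{\dot{H}_b^1}^2$, and then conclude $V_R''\le-\delta/2$. As written this does not follow, because $\|u(t)\|_{\dot{H}_b^1}$ is a priori \emph{unbounded} in the blowup regime, so an error of size $\epsilon\|u\|_{\dot{H}_b^1}^2$ cannot be absorbed into a fixed negative constant. The paper's device (see the claim (\ref{e7.8})) is to prove the slightly stronger inequality
\[
(8+\epsilon)\,\|u(t)\|_{\dot{H}_b^1}^2+\frac{4(\alpha+N-Np)}{p}\int_{\mathbb{R}^N}(I_\alpha*|u|^p)|u|^p\,dx\ \le\ -c
\]
for some small $\epsilon>0$; the strict sub-threshold hypothesis together with $y(t)>(1+\delta_2)K(b,\mathrm{rad})$ supplies exactly the slack needed for this upgrade from $8$ to $8+\epsilon$. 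The Young-inequality errors $\epsilon'\|u\|_{\dot{H}_b^1}^2+O_{\epsilon'}(R^{-\beta})$ coming from the radial tail estimates (Lemma~\ref{lem6.3}) are then absorbed into the extra $\epsilon\|u\|_{\dot{H}_b^1}^2$ already built into the left side, after which taking $R$ large handles the remainder. Your phrase ``the gap comes from $s>2$ and the strict sub-threshold inequality'' is pointing at the right mechanism, but make the $(8+\epsilon)$ explicit or the absorption step fails.
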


\begin{remark}\label{rek1.2}
Since $C_{GN}(b,\mathrm{rad})<C_{GN}(b)$ for $b>0$, we see from
Section 4 that $H(b)<H(b,\mathrm{rad})$ and
$K(b)<K(b,\mathrm{rad})$. This shows that the class of radial
solution enjoys strictly larger thresholds for the global
existence/blowup dichotomy.
\end{remark}

Our arguments parallel those of \cite{Dinh 2017}, where the
Schr\"{o}dinger equation  with an inverse-square potential was
considered. New technical obstructions appear in our arguments, due
to the nonlocal nonlinearity and the fast decay of the potential.
The first difficulty we face is the local well-posedness. The usual
ways to show the local well-posedness in $H^1(\mathbb{R}^N)$ are the
Kato's method and the energy method. In the presence of the singular
potential $b|x|^{-2}$, for the homogeneous Sobolev spaces
$\dot{W}_b^{q,\gamma}(\mathbb{R}^N)$ and the usual ones
$\dot{W}^{q,\gamma}(\mathbb{R}^N)$ are equivalent only in a certain
range of $\gamma$ and $q$, the Kato's method does not allow us to
study  $\mathrm{(CH_b)}$ in the energy space with the full range
$b>-\frac{(N-2)^2}{4}$. Moreover, Okazawa-Suzuki-Yokota
\cite{Okazawa et al. 2012} pointed out that the energy method
developed by Cazenave is not enough to study $\mathrm{(CH_b)}$ in
the energy space. So they formulated an improved energy method to
treat equation with an inverse-square potential and established an
abstract theorem. Based of which, \cite{Suzuki 2013} studied a
nonlocal equation, taking $\mathrm{(CH_b)}$ with $p=2$ as a special
example. Motivated by \cite{Okazawa et al. 2012} and \cite{Suzuki
2013}, in this paper, we further use this method to study
$\mathrm{(CH_b)}$ with general $p$, which needs much more
complicated calculations and an important inequality from
\cite{Cao-Li-Luo 2015}. The global existence is a direct result of
the local well-posedness and the sharp Gagliardo-Nirenberg
inequality. To show the blowup phenomenon the virial identity plays
an important role, which has not been proved in the presence of the
fast decay potential. This is the second difficulty we encounter. By
examining the proof of the virial identity in Proposition 6.5.1 in
\cite{Cazenave 2003}, the $H^2(\mathbb{R}^N)$ regularity of the
solution is important. However, we only have obtained the
$H^1(\mathbb{R}^N)$ solution by using the improved energy method. In
order to  improve the regularity, the equivalence of
$\dot{W}_b^{q,\gamma}(\mathbb{R}^N)$ and
$\dot{W}^{q,\gamma}(\mathbb{R}^N)$ is needed, so we can only
 obtain the result in parts of the  range
$b>-\frac{(N-2)^2}{4}$. So this method is not effective. Motivated
by \cite{Suzuki 2014}, we consider a proximation problem
$\mathrm{(CH_b^\delta)}$ of $\mathrm{(CH_b)}$. By using the results
and the methods of \cite{Cazenave 2003}, we can obtain the virial
identity for the solution $u_\delta$ to $\mathrm{(CH_b^\delta)}$,
and then by letting $\delta\to 0$, we obtain the virial identity for
the solution $u$ to $\mathrm{(CH_b)}$ avoiding the $H^2$
 regularity of $u$. We should point
 that the proof in \cite{Suzuki 2014} depends  heavily on $p=2$. Our
 proof is a modification of the methods of \cite{Cazenave
 2003} and \cite{Suzuki 2014}.

\medskip
This paper is organized as follows. In Section 2, we recall some
preliminary results related to equation ($\mathrm{CH_b}$). In
Section 3, we study the local well-posedness for equation
($\mathrm{CH_b}$) in the energy-subcritical case. In Section 4, we
first study the sharp Gagliardo-Nirenberg inequality by variational
methods, and then, based of which, we give the proofs of global
existence results. In Section 5, we establish the virial identities
and prove the blowup results.

\medskip

\textbf{Notations}. The notation $A\lesssim B$ means that $A\leq CB$
for some constant $C>0$. If $A\lesssim B\lesssim A$, we write $A\sim
B$. We write $A\wedge B=\min\{A, B\}$, $A\vee B=\max\{A, B\}$. We
use $C,\ C_1,\ C_2, \cdots$ to denote various constant which may
change from line to line. We use $L^q(I,L^r(\mathbb{R}^N))$
time-space norms defined via
\begin{equation*}
\|u\|_{L^q(I,L^r(\mathbb{R}^N))}:=\left(\int_{I}\|u(t)\|_{L^r(\mathbb{R}^N)}^q\right)^{\frac{1}{q}}
\end{equation*}
for any time-space slab $I\times \mathbb{R}^N$. We make the usual
modifications when $q$ or $r$ equals to $\infty$.  We also
abbreviate $L^q(I,L^r(\mathbb{R}^N))$ by $L_t^qL_x^r$. To shorten
formulas, we often omit $\mathbb{R}^N$ or $I\times \mathbb{R}^N$.
For $r\in[1, \infty]$, we let $r'$ denote the H\"{o}lder dual, i.e.,
the solution to $1/r+1/r'=1$. We define Sobolev spaces in terms of
$\mathcal{L}_b$ via
\begin{equation*}
\|u\|_{\dot{H}_b^{s,r}}=\|(\sqrt{\mathcal{L}_b})^{s/2}u\|_{L^r}\
\mathrm{and}\
\|u\|_{H_b^{s,r}}=\|(\sqrt{1+\mathcal{L}_b})^{s/2}u\|_{L^r}.
\end{equation*}
We abbreviate
$\dot{H}_b^{s}(\mathbb{R}^N)=\dot{H}_b^{s,2}(\mathbb{R}^N)$ and
$H_b^{s}(\mathbb{R}^N)=H_b^{s,2}(\mathbb{R}^N)$. We abbreviate
$\partial_ju=\frac{\partial u}{\partial x_j}$, $\partial_{ij}
u=\frac{\partial^2 u}{\partial x_j\partial x_j}$ and
$\partial_{i}u\partial_jv\partial_{ij}
w=\sum_{i=1}^{N}\sum_{j=1}^{N}\partial_{i}u\partial_jv\partial_{ij}
w$. $\delta_{ij}=1$ if $i=j$ and $0$ if $i\neq j$.
$H_r^1(\mathbb{R}^N)=\{u\in H^1(\mathbb{R}^N):\  u\ \mathrm{is\
radially\ symmetry}\}$. $B_R(0)=\{x\in \mathbb{R}^N:\ |x|<R\}$.
$B_R(0)^c=\mathbb{R}^N\setminus B_R(0)$. $\chi_{\Omega}(x)=1$ if
$x\in \Omega$ and $0$ if $x\not\in \Omega$. $2^*=\frac{2N}{N-2}$.

\section{Preliminaries}

\setcounter{section}{2} \setcounter{equation}{0}

The following well-known Hardy-Littlewood-Sobolev inequality  can be
found in \cite{Lieb-Loss 2001}.

\begin{lemma}\label{lem HLS}
Let $p,\ r>1$ and $0<\alpha<N$ with $1/p+(N-\alpha)/N+1/r=2$. Let
$u\in L^p(\mathbb{R}^N)$ and $v\in L^r(\mathbb{R}^N)$. Then there
exists a sharp constant $C(N,\alpha,p)$, independent of $u$ and $v$,
such that
\begin{equation*}
\left|\int_{\mathbb{R}^N}\int_{\mathbb{R}^N}\frac{u(x)v(y)}{|x-y|^{N-\alpha}}\right|\leq
C(N,\alpha,p)\|u\|_{L^p}\|v\|_{L^r}.
\end{equation*}
If $p=r=\frac{2N}{N+\alpha}$, then
\begin{equation*}
C(N,\alpha,p)=C_\alpha(N)=\pi^{\frac{N-\alpha}{2}}\frac{\Gamma(\frac{\alpha}{2})}{\Gamma(\frac{N+\alpha}{2})}\left\{\frac{\Gamma(\frac{N}{2})}{\Gamma(N)}\right\}^{-\frac{\alpha}{N}}.
\end{equation*}
\end{lemma}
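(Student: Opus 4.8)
The plan is to separate the statement into two parts that require rather different techniques: the qualitative boundedness of the bilinear form over the full admissible range of exponents, and the explicit identification of the sharp constant in the diagonal case $p=r=2N/(N+\alpha)$.

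For boundedness, I would first rewrite the double integral as $\int_{\mathbb{R}^N}u\,(k\ast v)\,dx$ with $k(x)=|x|^{-(N-\alpha)}$, reducing the claim to a mapping property of the Riesz potential. The crucial observation is that $k$ lies in the weak Lebesgue space $L^{s,\infty}(\mathbb{R}^N)$ with $s=N/(N-\alpha)$, since $\{x:|x|^{-(N-\alpha)}>\lambda\}$ is a ball of measure proportional to $\lambda^{-s}$. I would then invoke the generalized Young inequality for convolution against a weak-type kernel (itself a consequence of Marcinkiewicz interpolation) to obtain $\|k\ast v\|_{L^q}\lesssim\|v\|_{L^r}$ whenever $1+1/q=(N-\alpha)/N+1/r$. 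The hypothesis $1/p+(N-\alpha)/N+1/r=2$ forces $q=p'$, so a final application of H\"older's inequality yields $|\int u\,(k\ast v)|\le\|u\|_{L^p}\|k\ast v\|_{L^{p'}}\lesssim\|u\|_{L^p}\|v\|_{L^r}$, giving the inequality with a finite, though not yet optimal, constant. A more self-contained alternative, closer to the cited reference, is the rearrangement route: by the Riesz rearrangement inequality the triple integral does not decrease when $u$ and $v$ are replaced by their symmetric decreasing rearrangements, so one may assume both radial and decreasing, after which the layer-cake representation reduces the estimate to nested characteristic functions of balls and an explicit summation over level sets.

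For the sharp constant in the conformal case $p=r=2N/(N+\alpha)$, the above is insufficient and a genuine variational analysis is needed. The plan is first to use the Riesz rearrangement inequality to restrict to nonnegative, radially symmetric decreasing candidates, and then to exploit the conformal invariance of the functional: stereographic projection transports the problem to the sphere $S^N$, where the kernel is invariant under the conformal group. A competing-symmetries argument---alternating spherical rearrangement with the action of the conformal maps---then drives any optimizing sequence to the distinguished function $(1+|x|^2)^{-(N+\alpha)/2}$, up to translations and dilations. Substituting this extremizer into the functional and evaluating the resulting Beta and Gamma integrals produces the closed form for $C_\alpha(N)$.

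The hard part will be precisely this last step: establishing existence of an optimizer, identifying it explicitly, and computing the constant. The boundedness is routine real-variable analysis, whereas the sharpness relies on the full conformal structure---either Lieb's competing-symmetries method or concentration-compactness together with an Euler--Lagrange analysis---plus a nontrivial Gamma-function computation. Since only the boundedness is used in what follows, I would carry that part out in full and defer the sharp constant to the cited reference.
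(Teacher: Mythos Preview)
Your outline is correct, but note that the paper does not actually prove this lemma: it simply states the Hardy--Littlewood--Sobolev inequality and cites \cite{Lieb-Loss 2001} for both the boundedness and the sharp constant. There is therefore no ``paper's own proof'' to compare against. That said, your sketch faithfully reflects the arguments in the cited reference: the boundedness via weak-type $L^{N/(N-\alpha),\infty}$ membership of the kernel together with the generalized Young/Marcinkiewicz route (or, alternatively, the layer-cake and rearrangement argument) is exactly the Lieb--Loss treatment, and your plan for the sharp diagonal constant---Riesz rearrangement to reduce to radial decreasing functions, conformal invariance and competing symmetries to identify the extremizer $(1+|x|^2)^{-(N+\alpha)/2}$, then a Gamma-function computation---is precisely Lieb's original method as reproduced there. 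Your closing remark that only the qualitative bound is used downstream is also accurate for this paper, so deferring the sharp-constant details to the citation is entirely in keeping with what the authors themselves do.
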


\begin{remark}\label{rek1.31}
(1). By the Hardy-Littlewood-Sobolev inequality above, for any $v\in
L^s(\mathbb{R}^N)$ with $s\in(1,\frac{N}{\alpha})$, $I_\alpha\ast
v\in L^{\frac{Ns}{N-\alpha s}}(\mathbb{R}^N)$ and
\begin{equation*}
\|I_\alpha\ast v\|_{L^{\frac{Ns}{N-\alpha s}}}\leq C\|v\|_{L^s},
\end{equation*}
where $C>0$ is a constant depending only on $N,\ \alpha$ and $s$.

(2). By the Hardy-Littlewood-Sobolev inequality above and the
Sobolev embedding theorem, we obtain
\begin{equation}\label{e22.4}
\begin{split}
\int_{\mathbb{R}^N}(I_\alpha\ast|u|^p)|u|^p\leq
C\left(\int_{\mathbb{R}^N}|u|^{\frac{2Np}{N+\alpha}}\right)^{\frac{N+\alpha}{N}}
\leq C\|u\|_{H^1}^{2p}
\end{split}
\end{equation}
for any $p\in \left[\frac{N+\alpha}{N},\frac{N+\alpha}{N-2}\right]$,
where $C>0$ is a constant depending only on $N,\ \alpha$ and $p$.
\end{remark}

\begin{lemma}\label{lem3.2}
Let $N\geq 3$, $\alpha\in(0,N)$ and
$p\in\left[\frac{N+\alpha}{N},\frac{N+\alpha}{N-2}\right]$. Assume
that $\{w_n\}_{n=1}^{\infty}\subset H^1(\mathbb{R}^N)$ satisfying
$w_n\to w$ weakly in $H^1(\mathbb{R}^N)$ as $n\to\infty$, then
\begin{equation*}
(I_\alpha\ast|w_n|^p)|w_n|^{p-2}w_n\to
(I_\alpha\ast|w|^p)|w|^{p-2}w\ \mathrm{weakly\ in\
}H^{-1}(\mathbb{R}^N)\ \mathrm{as}\ n\to\infty.
\end{equation*}
\end{lemma}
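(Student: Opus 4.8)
The plan is to test against a dense class and reduce the weak-$H^{-1}$ convergence to a limit of scalar pairings. Write $F(v):=(I_\alpha\ast|v|^p)|v|^{p-2}v$. Since $w_n\rightharpoonup w$ in $H^1$, the sequence is bounded there, and by the Hardy--Littlewood--Sobolev inequality together with the Sobolev embedding one has the uniform bound $\|F(v)\|_{H^{-1}}\lesssim\|v\|_{H^1}^{2p-1}$; the exponent bookkeeping behind this (distributing one factor $|v|^p$, one factor $|v|^{p-1}$ and the test function through Hölder and HLS) goes through on the whole closed interval and is tight precisely at $p=(N+\alpha)/N$ and $p=(N+\alpha)/(N-2)$, cf. Remark \ref{rek1.31}. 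Hence $\{F(w_n)\}$ is bounded in $H^{-1}$ and $F(w)\in H^{-1}$. Because $C_0^\infty(\mathbb{R}^N)$ is dense in $H^1(\mathbb{R}^N)$, a standard $\varepsilon/3$ argument shows it then suffices to prove $\langle F(w_n)-F(w),\varphi\rangle\to0$ for every fixed $\varphi\in C_0^\infty(\mathbb{R}^N)$: the uniform bound upgrades convergence on the dense set to convergence against all of $H^1$, which is exactly weak convergence in $H^{-1}$.

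Fix such a $\varphi$, supported in a ball $B_R(0)$, and split, after inserting $\pm(I_\alpha\ast|w_n|^p)|w|^{p-2}w$,
\[
\langle F(w_n)-F(w),\varphi\rangle=A_n+B_n,
\]
\[
A_n=\int_{\mathbb{R}^N}(I_\alpha\ast|w_n|^p)\,(|w_n|^{p-2}w_n-|w|^{p-2}w)\,\overline{\varphi}\,dx,
\]
\[
B_n=\int_{\mathbb{R}^N}\big(I_\alpha\ast(|w_n|^p-|w|^p)\big)\,|w|^{p-2}w\,\overline{\varphi}\,dx.
\]
The point of localizing is that $\overline\varphi$ now confines the non-potential factor of $A_n$ to $B_R(0)$, where compactness is available. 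For $A_n$, HLS gives that $I_\alpha\ast|w_n|^p$ is bounded in $L^{2N/(N-\alpha)}$ uniformly in $n$. By Rellich--Kondrachov, $w_n\to w$ strongly in $L^q(B_R(0))$ for every $q<2^*$, and using the elementary inequalities for $|\,|z_1|^{p-2}z_1-|z_2|^{p-2}z_2|$ — of Lipschitz type when $p\ge2$, of H\"older type (exponent $p-1$) when $1<p<2$, which is where the inequality from \cite{Cao-Li-Luo 2015} enters — together with continuity of the superposition operator, one gets $(|w_n|^{p-2}w_n-|w|^{p-2}w)\overline\varphi\to0$ strongly in $L^{2N/(N+\alpha)}(B_R(0))$. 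Note the relevant exponent $2N(p-1)/(N+\alpha)$ stays strictly below $2^*$ on all of $[(N+\alpha)/N,(N+\alpha)/(N-2)]$ since $N\ge3$, so Rellich applies even at the energy-critical endpoint; pairing the bounded factor with the strongly null factor gives $A_n\to0$.

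For $B_n$ I would use the self-adjointness of the Riesz potential, $\int(I_\alpha\ast f)g=\int f\,(I_\alpha\ast g)$, to rewrite $B_n=\int_{\mathbb{R}^N}(|w_n|^p-|w|^p)\,(I_\alpha\ast g)\,dx$ with $g:=|w|^{p-2}w\,\overline{\varphi}$. Since $g\in L^{2N/(N+\alpha)}$, Remark \ref{rek1.31}(1) gives $I_\alpha\ast g\in L^{2N/(N-\alpha)}$, the exact dual exponent. On the other hand $\{|w_n|^p\}$ is bounded in $L^{2N/(N+\alpha)}$, and along a subsequence $w_n\to w$ a.e. (Rellich plus a diagonal argument over balls); boundedness together with a.e.\ convergence forces $|w_n|^p\rightharpoonup|w|^p$ weakly in $L^{2N/(N+\alpha)}$, so testing against the fixed dual element $I_\alpha\ast g$ yields $B_n\to0$. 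Passing to the full sequence is handled by the subsequence principle: every subsequence admits a further subsequence along which both $A_n,B_n\to0$, always to the same limit $F(w)$.

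The hard part is the nonlocal term $B_n$: weak $H^1$ convergence provides no global strong convergence of $|w_n|^p$, and the convolution is genuinely nonlocal, so one cannot localize both factors at once. The device that rescues the argument is precisely to move the potential onto the fixed, compactly generated function $g$ by self-adjointness, turning $B_n$ into a pairing of the merely weakly convergent $|w_n|^p$ against a fixed element of the dual space, so that boundedness plus a.e.\ convergence of $w_n$ is all one needs. The secondary technical nuisance is the superposition estimate for $|z|^{p-2}z$ when $1<p<2$, where the Lipschitz bound is unavailable and one must invoke the H\"older-type inequality instead.
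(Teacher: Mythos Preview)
Your proof is correct and follows essentially the same route as the paper: the identical splitting $A_n+B_n$, the self-adjointness trick $\int(I_\alpha\ast f)g=\int f(I_\alpha\ast g)$ to handle $B_n$, Rellich compactness for $A_n$, and the density reduction to $C_0^\infty$. Your version is in fact more carefully written than the paper's terse proof---you make explicit the uniform $H^{-1}$ bound needed for the density step, verify the exponent bookkeeping at the endpoints, and address the $1<p<2$ regime (which the paper's main theorems exclude but the lemma itself does not).
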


\begin{proof}
By the Rellich theorem,  $w_n\to w$ strongly in
$L_{\mathrm{loc}}^r(\mathbb{R}^N)$ with $r\in [1,2^*)$. By using the
Hardy-Littlewood-Sobolev inequality,  for any $\varphi\in
C_0^{\infty}(\mathbb{R}^N)$,
\begin{equation*}
\begin{split}
&\int_{\mathbb{R}^N}(I_\alpha\ast|w_n|^p)|w_n|^{p-2}w_n\bar{\varphi}
dx
-\int_{\mathbb{R}^N}(I_\alpha\ast|w|^p)|w|^{p-2}w\bar{\varphi} dx\\
&=\int_{\mathbb{R}^N}(I_\alpha\ast(|w_n|^p-|w|^p))|w|^{p-2}w\bar{\varphi}dx\\
&\ \qquad
+\int_{\mathbb{R}^N}(I_\alpha\ast|w_n|^p)(|w_n|^{p-2}w_n-|w|^{p-2}w)\bar{\varphi} dx\\
&=\int_{\mathbb{R}^N}(I_\alpha\ast(|w|^{p-2}w\bar{\varphi}))(|w_n|^p-|w|^p)dx\\
&\ \qquad
+\int_{\mathbb{R}^N}(I_\alpha\ast|w_n|^p)(|w_n|^{p-2}w_n-|w|^{p-2}w)\bar{\varphi} dx\\
&\to 0
\end{split}
\end{equation*}
as $n\to\infty$. By the dense of $C_0^{\infty}(\mathbb{R}^N)$ in
$H^1(\mathbb{R}^N)$, we complete the proof.
\end{proof}

We recall the following radial Sobolev embedding, see
\cite{Cho-Ozawa 2009}.

\begin{lemma}\label{lem Radial Sob}
Let $N\geq 2$ and $1/2\leq s < 1$. Then for any $u\in
H_r^1(\mathbb{R}^N)$,
\begin{equation*}
\sup_{x\in \mathbb{R}^N\setminus\{0\}}|x|^{\frac{N-2s}{2}}|u(x)|\leq
C(N,s)\|u\|_{L^2}^{1-s}\|u\|_{\dot{H}^1(\mathbb{R}^N)}^{s}.
\end{equation*}
Moreover, the above inequality also holds for $N\geq 3$ and $s=1$.
\end{lemma}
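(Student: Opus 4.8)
The plan is to establish the pointwise estimate first for smooth compactly supported radial functions and then pass to the general case by density, since both sides of the inequality are continuous with respect to the $H^1$-norm on $H_r^1(\mathbb{R}^N)$. For such a $u$, write $u=u(\rho)$ with $\rho=|x|$. Since $u$ vanishes for large $\rho$ and $\frac{d}{d\rho}|u|^2=2\,\mathrm{Re}(\bar u\,u')$, the fundamental theorem of calculus gives $|u(r)|^2=-\int_r^\infty\frac{d}{d\rho}|u|^2\,d\rho\le 2\int_r^\infty|u||u'|\,d\rho$. Because $N-2s>0$ throughout the stated range, we have $r^{N-2s}\le\rho^{N-2s}$ for $\rho\ge r$, so
\[
r^{N-2s}|u(r)|^2\le 2\int_0^\infty\rho^{N-2s}|u(\rho)||u'(\rho)|\,d\rho
\]
uniformly in $r$. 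Taking the supremum over $r>0$ (equivalently over $x\neq0$, as $u$ is radial) reduces the whole statement to estimating this one weighted integral.

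Next I would bound that integral by a three-factor Hölder inequality against the natural building blocks $\rho^{N-1}|u|^2$, $\rho^{N-1}|u'|^2$ and $\rho^{N-3}|u|^2$, whose integrals over $(0,\infty)$ are comparable (the surface measure $\omega_{N-1}\rho^{N-1}$ being absorbed into the constant) to $\|u\|_{L^2}^2$, $\|u\|_{\dot{H}^1}^2$ and $\int_{\mathbb{R}^N}|x|^{-2}|u|^2\,dx$, respectively. A short computation shows that with the exponents $a=1-s$, $b=\tfrac12$, $c=s-\tfrac12$ one has the exact factorization $\rho^{N-2s}|u||u'|=(\rho^{N-1}|u|^2)^a(\rho^{N-1}|u'|^2)^b(\rho^{N-3}|u|^2)^c$ in all three of $\rho$, $|u|$ and $|u'|$; since $a+b+c=1$, the generalized Hölder inequality yields
\[
\int_0^\infty\rho^{N-2s}|u||u'|\,d\rho\lesssim \|u\|_{L^2}^{2(1-s)}\,\|u\|_{\dot{H}^1}\,\Big(\int_{\mathbb{R}^N}\tfrac{|u|^2}{|x|^2}\,dx\Big)^{s-\frac12}.
\]
Finally, the sharp Hardy inequality (\ref{e1.1}) controls $\int_{\mathbb{R}^N}|x|^{-2}|u|^2\,dx$ by $\|u\|_{\dot{H}^1}^2$, collapsing the last two factors into $\|u\|_{\dot{H}^1}^{2s}$. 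Taking square roots gives the claim, and the extra endpoint $s=1$ is simply the case $a=0$ of the same computation.

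The main obstacle, and the reason for the precise hypotheses, lies in this exponent bookkeeping rather than in any single estimate. The requirement $a,c\ge0$ that legitimizes the generalized Hölder inequality is equivalent to $\tfrac12\le s\le1$, which is exactly the stated range; and invoking Hardy to dominate the singular weight $\rho^{N-3}$ forces $N\ge3$ at the endpoint $s=1$ (where $c=\tfrac12>0$ and the constant $(N-2)^2/4$ in (\ref{e1.1}) must be positive), which accounts for that case being singled out. The delicate step is verifying that the three-weight factorization matches $\rho^{N-2s}|u||u'|$ simultaneously in $\rho$, $|u|$ and $|u'|$. For $N=2$ the weight $\rho^{N-3}=\rho^{-1}$ is no longer controlled by $\dot{H}^1$, and one must instead run the same fundamental-theorem-of-calculus argument with a fractional interpolation as in \cite{Cho-Ozawa 2009}; since every application in this paper has $N\ge3$, the argument above suffices for our purposes.
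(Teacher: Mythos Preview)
Your argument is correct for $N\ge 3$, which is the only regime used anywhere in this paper. The factorization $\rho^{N-2s}|u||u'|=(\rho^{N-1}|u|^2)^{1-s}(\rho^{N-1}|u'|^2)^{1/2}(\rho^{N-3}|u|^2)^{s-1/2}$ checks out in all three variables, the generalized H\"older step is legitimate precisely on $\tfrac12\le s\le1$, and Hardy's inequality (\ref{e1.1}) closes the estimate as you say. The density passage is also fine: once the inequality is known on $C_0^\infty\cap H_r^1$, it upgrades itself to $H_r^1$ because the weighted sup-norm on the left is controlled by the right-hand side applied to differences.

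Compared to the paper: there is nothing to compare, since the paper simply recalls this inequality from \cite{Cho-Ozawa 2009} and gives no proof. Your write-up therefore supplies strictly more than the paper does. The one point where you fall short of the \emph{stated} lemma is $N=2$ with $\tfrac12<s<1$: there the Hardy constant $(N-2)^2/4$ vanishes, so the third factor $\int\rho^{-1}|u|^2\,d\rho$ is genuinely not dominated by $\|u\|_{\dot H^1}^2$, and your three-weight H\"older scheme breaks down. You acknowledge this honestly and defer to \cite{Cho-Ozawa 2009}; since Lemma~\ref{lem Radial Sob} is invoked only inside Lemma~\ref{lem6.3}, which already carries the hypothesis $N\ge3$, this gap is immaterial for the paper's purposes. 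If you wanted a self-contained $N=2$ proof, one standard route is to replace the $\rho^{N-3}|u|^2$ block by a fractional-derivative norm $\|u\|_{\dot H^s}$ via the real-interpolation identity $[\,L^2,\dot H^1\,]_{s,2}=\dot H^s$ applied directly to the one-dimensional integral, rather than going through Hardy.
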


We recall the Strichartz estimates for the Schr\"{o}dinger operator
with an inverse-square potential (see \cite{Bouclet-Mizutani 2017}
and \cite{Burq-Planchon-Stalke 2003}). We begin by introducing the
notion of admissible pair.

\begin{definition}\label{def 2.1}
We say that a pair $(p, q)$ is  Schr\"{o}dinger admissible, for
short $(p, q)\in S$, if
\begin{equation*}
\frac{2}{p}+\frac{N}{q}=\frac{N}{2},\ p, q\in [2,\infty]\
\mathrm{and}\  (p, q, N)\neq(2,\infty, 2).
\end{equation*}
\end{definition}

\begin{proposition}\label{pro 2.2}
Let $N\geq 3$ and $b>-(N-2)^2/4$. Then for any $(p,q),\ (a,b)\in S$,
the following inequalities hold:
\begin{equation}\label{e2.2}
\|e^{it\mathcal{L}_b}\varphi(x)\|_{L^p(\mathbb{R},L^q)}\lesssim
\|\varphi(x)\|_{L^2},
\end{equation}
\begin{equation}\label{e2.4}
\left\|\int_0^{t}e^{i(t-s)\mathcal{L}_b}F(s,x)ds\right\|_{L^p(\mathbb{R},L^q)}\lesssim
\|F(t,x)\|_{L^{a'}(\mathbb{R},L^{b'})}.
\end{equation}
Here, $(a,a')$ and $(b,b')$ are H\"{o}lder dual pairs.
\end{proposition}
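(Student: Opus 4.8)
The plan is to derive both (\ref{e2.2}) and (\ref{e2.4}) from the abstract Keel--Tao Strichartz machinery, for which only two facts about the free propagator $e^{it\mathcal{L}_b}$ are needed: an energy (mass) bound and a dispersive decay estimate. First I would record the mass bound, which is essentially free. Because $b>-(N-2)^2/4$, the Hardy inequality (\ref{e1.1}) shows the form $Q$ is positive, so its Friedrichs extension $\mathcal{L}_b$ is a nonnegative self-adjoint operator on $L^2(\mathbb{R}^N)$; by the spectral theorem $e^{it\mathcal{L}_b}$ is a unitary group and $\|e^{it\mathcal{L}_b}\varphi\|_{L^2}=\|\varphi\|_{L^2}$ for every $t$. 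This supplies the $(p,q)=(\infty,2)$ endpoint at no cost.

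The heart of the matter is the dispersive estimate
$$\|e^{it\mathcal{L}_b}\varphi\|_{L^\infty}\lesssim |t|^{-N/2}\|\varphi\|_{L^1}.$$
For $b\geq 0$ I would obtain this by comparison with the free Schr\"odinger kernel: since the potential $b|x|^{-2}$ is then nonnegative, a subordination/kernel-domination argument controls the propagator by the explicit $|t|^{-N/2}$ Gaussian bound. The delicate range is $-(N-2)^2/4<b<0$, where the potential is attractive and singular and the clean $L^1\to L^\infty$ bound is known to fail in general. To cover the full range I would instead pass to the spectral decomposition of $\mathcal{L}_b$ into spherical harmonics: on the $\ell$-th angular sector, after the standard conjugation by $r^{-(N-1)/2}$ the operator becomes a one-dimensional Schr\"odinger operator $-\partial_r^2+(\nu_\ell^2-\tfrac14)r^{-2}$ whose propagator is diagonalized by the Hankel transform of order $\nu_\ell=\sqrt{(\ell+\tfrac{N-2}{2})^2+b}$. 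This order stays real precisely because $b>-(N-2)^2/4$ (already $\nu_0>0$), and summing the resulting dispersive and local-smoothing bounds uniformly in $\ell$ yields the input estimates directly, without ever needing a pointwise kernel bound. This reproduces the results of \cite{Burq-Planchon-Stalke 2003} and \cite{Bouclet-Mizutani 2017}.

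With the mass bound and the dispersive decay (or its weighted substitute in the angular-decomposition route) in hand, the homogeneous estimate (\ref{e2.2}) and the inhomogeneous estimate (\ref{e2.4}) follow from the standard $TT^*$ argument together with the Christ--Kiselev lemma for the non-endpoint pairs, and from the Keel--Tao endpoint theorem for $(p,q)=(2,2^*)$; the admissibility relation $\tfrac{2}{p}+\tfrac{N}{q}=\tfrac{N}{2}$ is exactly the interpolation scaling between the $L^2$ and dispersive endpoints, and the exclusion $(p,q,N)\neq(2,\infty,2)$ is irrelevant here since $N\geq 3$. I expect the main obstacle to be precisely the dispersive estimate for $-(N-2)^2/4<b<0$: because the naive $L^1\to L^\infty$ bound degenerates for an attractive singular potential, one cannot invoke Keel--Tao verbatim, and the honest work is the Hankel-transform analysis that produces the Strichartz bounds directly (or a weighted dispersive estimate still meeting the hypotheses of an abstract Strichartz theorem).
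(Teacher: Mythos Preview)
The paper does not actually prove Proposition~\ref{pro 2.2}; it is stated as a quotation from the literature with citations to \cite{Bouclet-Mizutani 2017} and \cite{Burq-Planchon-Stalke 2003}, and no argument is given. So there is no ``paper's own proof'' to compare against beyond those references. Your outline is, in broad strokes, the strategy of \cite{Burq-Planchon-Stalke 2003}: decompose into spherical harmonics, reduce each angular sector to a Bessel/Hankel-type radial operator, and feed the resulting estimates into an abstract Strichartz machine.

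That said, one part of your sketch would not go through as written. For $b\geq 0$ you propose to get the $L^1\to L^\infty$ dispersive bound by ``subordination/kernel-domination'' against the free Schr\"odinger kernel. This comparison principle is valid for heat semigroups (via Feynman--Kac, a nonnegative potential only shrinks the positive heat kernel), but it does not transfer to the unitary group $e^{it\mathcal{L}_b}$: the Schr\"odinger kernel is complex and oscillatory, and there is no pointwise domination to exploit. In fact the clean $L^1\to L^\infty$ dispersive estimate is delicate for \emph{all} $b\neq 0$, not only for $b<0$. The correct route---and the one actually taken in \cite{Burq-Planchon-Stalke 2003}---is to run the Hankel-transform/spherical-harmonic analysis uniformly across the whole range $b>-(N-2)^2/4$, rather than splitting into a ``soft'' case $b\geq 0$ and a ``hard'' case $b<0$. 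Once you drop the kernel-domination step and apply your second paragraph to every $b$, the outline is sound and matches the cited sources.
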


We recall the convergence of the operators
$\{\mathcal{L}_b^n\}_{n=1}^{\infty}$ defined below arising from the
lack of translation symmetry for $\mathcal{L}_b$.

\begin{definition}\label{def 2.2}
For $\{x_n\}_{n=1}^{\infty}\subset \mathbb{R}^N$, define
\begin{equation*}
\mathcal{L}_b^n:=-\Delta+\frac{b}{|x+x_n|^2},\
\mathcal{L}_b^{\infty}:=\left\{\begin{array}{ll}
-\Delta+\frac{b}{|x+x_{\infty}|^2},&\ \mathrm{if}\ x_n\to
x_{\infty}\in\mathbb{R}^N,\\
-\Delta, &\ \mathrm{if}\ |x_n|\to\infty.
\end{array}\right.
\end{equation*}
\end{definition}

By definition, we have
$\mathcal{L}_b[u(x-x_n)]=[\mathcal{L}_b^nu](x-x_n)$. The operator
$\mathcal{L}_b^{\infty}$ appears as a limit of the operators
$\mathcal{L}_b^n$ in the following senses, see \cite{Killip-Miao
2017}.

\begin{lemma}\label{lem 2.3}
Let $N\geq 3$ and $b>-(N-2)^2/4$. Suppose
$\{t_n\}_{n=1}^{\infty}\subset \mathbb{R}$ satisfies $t_n\to
t_{\infty}\in \mathbb{R}$ and
$\{x_n\}_{n=1}^{\infty}\subset\mathbb{R}^N$ satisfies $x_n\to
x_{\infty}\in \mathbb{R}^N$ or $|x_n|\to\infty$. Then
\begin{equation*}
\lim_{n\to\infty}\|\mathcal{L}_b^nu-\mathcal{L}_b^{\infty}u\|_{\dot{H}^{-1}}=0\
\ \mathrm{for\ any\ }u\in \dot{H}^{1}(\mathbb{R}^N),
\end{equation*}
\begin{equation*}
\lim_{n\to\infty}\|e^{-it_n\mathcal{L}_b^n}u-e^{-it_{\infty}\mathcal{L}_b^{\infty}}u\|_{\dot{H}^{-1}}=0\
\  \mathrm{for\ any\ }u\in \dot{H}^{-1}(\mathbb{R}^N),
\end{equation*}
\begin{equation*}
\lim_{n\to\infty}\|\sqrt{\mathcal{L}_b^n}u-\sqrt{\mathcal{L}_b^{\infty}}u\|_{L^2}=0\
\  \mathrm{for\ any\ }u\in \dot{H}^{1}(\mathbb{R}^N).
\end{equation*}
Furthermore, for any $(p,q)\in S$ with $p\neq2$,
\begin{equation*}
\lim_{n\to\infty}\|e^{-it_n\mathcal{L}_b^n}u-e^{-it_{\infty}\mathcal{L}_b^{\infty}}u\|_{L^p(
\mathbb{R},L^q)}=0\ \  \mathrm{for\ any\ }u\in L^2(\mathbb{R}^N).
\end{equation*}
\end{lemma}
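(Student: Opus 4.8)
The plan is to treat the four assertions in order, using the first as the analytic core from which the remaining three follow by standard propagator-comparison arguments built on the sharp Hardy inequality \eqref{e1.1} and the Strichartz estimates of Proposition \ref{pro 2.2}.

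For the first assertion, I note that $\mathcal{L}_b^n u-\mathcal{L}_b^\infty u$ is simply multiplication by the potential difference $V_n:=b(|x+x_n|^{-2}-|x+x_\infty|^{-2})$ when $x_n\to x_\infty$, or by $b|x+x_n|^{-2}$ when $|x_n|\to\infty$, since the $-\Delta$ parts cancel. I would estimate the $\dot{H}^{-1}$ norm by duality: for $\phi\in\dot{H}^1$ with $\|\phi\|_{\dot{H}^1}=1$, Cauchy--Schwarz together with the shifted Hardy inequality $\||x+x_n|^{-1}\phi\|_{L^2}\lesssim\|\nabla\phi\|_{L^2}$ bounds $|\langle V_n u,\phi\rangle|$ by $C\|\nabla u\|_{L^2}$, uniformly in $n$. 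This uniform bound reduces matters to $u\in C_0^\infty(\mathbb{R}^N)$ by density. For such $u$ I split space into a small ball around the pole $-x_\infty$ and its complement: on the ball the contribution is small uniformly in large $n$ because $|x|^{-2}$ is locally integrable for $N\ge 3$ and $u$ is bounded, while on the complement the potential difference tends to zero uniformly on the compact support of $u$ as $x_n\to x_\infty$. In the escaping case $|x_n|\to\infty$ the potential $b|x+x_n|^{-2}$ acts on $u$ supported in a fixed ball $B_\rho(0)$, so the integrand lives on $B_\rho(x_n)$ where $|x|\ge|x_n|-\rho\to\infty$, forcing the whole term to zero. I expect this first assertion to be the main obstacle, precisely because one must simultaneously control the singularity of the moving inverse-square pole and the loss of mass at spatial infinity, uniformly over all $\dot{H}^1$ test functions.

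For the third assertion, the first assertion paired against $u\in\dot{H}^1$ already gives $Q_n(u)\to Q_\infty(u)$, that is $\|\sqrt{\mathcal{L}_b^n}u\|_{L^2}\to\|\sqrt{\mathcal{L}_b^\infty}u\|_{L^2}$, since $\|\sqrt{\mathcal{L}_b^n}u\|_{L^2}^2=\|\nabla u\|_{L^2}^2+b\int_{\mathbb{R}^N}|x+x_n|^{-2}|u|^2\,dx$. I would then establish the weak convergence $\sqrt{\mathcal{L}_b^n}u\rightharpoonup\sqrt{\mathcal{L}_b^\infty}u$ in $L^2$, identifying the weak limit by testing against a dense class and using the functional calculus for $\mathcal{L}_b^n$ as in \cite{Killip-Miao 2017}; convergence of the norms then upgrades weak convergence to strong convergence in $L^2$, which is the claim.

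For the second and fourth assertions I would compare the propagators by Duhamel's formula,
\[
e^{-it\mathcal{L}_b^\infty}v-e^{-it\mathcal{L}_b^n}v=i\int_0^t e^{-i(t-s)\mathcal{L}_b^n}\big(\mathcal{L}_b^n-\mathcal{L}_b^\infty\big)e^{-is\mathcal{L}_b^\infty}v\,ds,
\]
first splitting off the time error $e^{-it_n\mathcal{L}_b^\infty}v-e^{-it_\infty\mathcal{L}_b^\infty}v$, which vanishes by strong continuity since $t_n\to t_\infty$. For the second assertion I work in $\dot{H}^{-1}$: the group $e^{-i(t-s)\mathcal{L}_b^n}$ is an isometry on the space defined through $\mathcal{L}_b^n$, which is equivalent to $\dot{H}^{-1}$ uniformly in $n$ by Hardy, so after approximating $u\in\dot{H}^{-1}$ by $v\in\dot{H}^1$ (controlling the error via this uniform bound) the integrand tends to zero pointwise in $s$ by the first assertion and is dominated uniformly, whence dominated convergence closes the argument. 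The fourth assertion is handled analogously but measuring the output in $L^p(\mathbb{R},L^q)$ and invoking the Strichartz estimates of Proposition \ref{pro 2.2}, whose constants are uniform in $n$; the restriction $p\neq2$ is what lets the corresponding inhomogeneous estimate close globally in time. I expect this last assertion to require the most bookkeeping, and I would follow \cite{Killip-Miao 2017} for its details.
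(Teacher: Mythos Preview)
The paper does not prove this lemma: it is recalled from \cite{Killip-Miao 2017} without argument, so there is no ``paper's own proof'' to compare against. Your sketch is essentially the standard route taken in that reference---potential-difference estimate by duality and Hardy for the first claim, norm-plus-weak convergence for the square root, and Duhamel comparison combined with uniform Strichartz bounds for the propagators---and you correctly identify \cite{Killip-Miao 2017} as the source for the technical details (particularly the weak convergence of $\sqrt{\mathcal{L}_b^n}u$, which does require the functional-calculus argument you allude to rather than just the quadratic-form identity).
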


\section{Local well-posedness}

\setcounter{section}{3} \setcounter{equation}{0}

In this section, we study the local well-posedness for equation
($\mathrm{CH_b}$) by using the abstract theory established in
\cite{Okazawa et al. 2012} by using an improved energy method.

\textbf{3.1.  Abstract theory of nonlinear Schr\"{o}dinger
equations.}  Let $S$ be a nonnegative selfadjoint operator in a
complex Hilbert space $X$. Set $X_S := D(S^{1/2})$. Then we have the
usual triplet: $X_S\subset X = X^*\subset X_S^*$, where $*$ denotes
conjugate space. Under this setting $S$ can be extended to a
nonnegative selfadjoint operator in $X_S^*$ with domain $X_S$. Now
consider
\begin{equation}\label{e4.1}
\begin{cases}
i\partial_tu=Su+g(u), \quad (t,x)\in
\mathbb{R}\times\mathbb{R}^{N},\\
u(0,x)=u_0(x),\quad x\in \mathbb{R}^{N},
\end{cases}
\end{equation}
where  $g : X_S\to X_S^*$ is a nonlinear operator satisfying the
following conditions.

(G1) Existence of energy functional: there exists $G\in
C^1(X_S;\mathbb{R})$ such that $G' = g$, that is, given $u\in X_S$,
for any $\epsilon> 0$ there exists $\delta=\delta(u,\epsilon)> 0$
such that
\begin{equation*}
|G(u+v)-G(u)-\mathrm{Re}\langle g(u),v\rangle_{X_s^*,X_s}|\leq
\epsilon\|v\|_{X_S}
\end{equation*}
for any  $v\in X_S$ with $\|v\|_{X_S}<\delta$;

(G2) Local Lipschitz continuity: for any $M
> 0$ there exists $C(M) > 0$ such that
\begin{equation*}
\|g(u)-g(v)\|_{X_S^*}\leq C(M)\|u-v\|_{X_S}
\end{equation*}
for any $u, v\in X_S$ with $\|u\|_{X_S}$, $\|v\|_{X_S}\leq M$;

(G3) H\"{o}lder-like continuity of energy functional: given $M> 0$,
for any $\delta > 0$ there exists a constant $C_\delta(M)
> 0$ such that
\begin{equation*}
|G(u)-G(v)|\leq \delta+ C_\delta(M)\|u-v\|_{X}
\end{equation*}
for any $u, v\in X_S$ with $\|u\|_{X_S}$, $\|v\|_{X_S}\leq M$;

(G4) Gauge type condition: for any $u\in X_S$,
\begin{equation*}
\mathrm{Im}\langle g(u), u\rangle_{X_S^*,X_S}=0;
\end{equation*}

(G5) Closedness type condition: let $I\subset \mathbb{R}$ be a
bounded open interval and  $\{w_n\}_{n=1}^{\infty}$ be any bounded
sequence in $L^{\infty}(I;X_S)$ such that
\begin{equation*}
\begin{cases}
w_n(t)\to w(t)\ \ \mathrm{weakly\ in}\ X_S\  \mathrm{as}\ n\to
\infty\  \mathrm{for\ almost\ all} \ t\in I,\\
g(w_n)\to f \ \ \mathrm{weakly *\ in}\ L^{\infty}(I;X_S^*) \
\mathrm{as}\ n\to \infty,
\end{cases}
\end{equation*}
then
\begin{equation*}
\mathrm{Im}\int_{I}\langle f(t),w(t)\rangle_{X_S^*,X_S}dt=\lim_{n\to
\infty}\mathrm{Im} \int_{I}\langle
g(w_n(t)),w_n(t)\rangle_{X_S^*,X_S}dt.
\end{equation*}

Under the above assumptions on $g$, the authors in \cite{Okazawa et
al. 2012} established the following local well-posedness result for
(\ref{e4.1}).

\begin{theorem}\label{thm4.1}
 Assume that $g : X_S \to X_S^*$ satisfies (G1)-(G5). Then for any $u_0\in X_S$ with $\|u_0\|_{X_S}\leq M$ there
 exists an interval
$I_{M}\subset \mathbb{R}$ containing $0$ such that (\ref{e4.1})
admits
 a  local weak solution $u\in L^{\infty}(I_M, X_S)\cap
W^{1,\infty}(I_M, X_S^*)$. Moreover, $u\in C_{w}(I_M, X_S)$ and
$\|u(t)\|_{X}=\|u_0\|_{X}$ for any $t\in I_{M}$. Further assume the
uniqueness of local weak solutions to (\ref{e4.1}). Then
$$u\in C(I_M, X_S)\cap C^1(I_M, X_S^*),$$ and the conservation law
holds:
\begin{equation*}
 E(u(t))=E(u_0),\ \mathrm{for\ any} \ t\in I_M,
\end{equation*}
where $E$ is the energy of equation (\ref{e4.1}) defined by
\begin{equation*}
E(\varphi):= \frac{1}{2}\|S^{1/2}\varphi\|_X^2 + G(\varphi),\ \
\varphi\in  X_S.
\end{equation*}
\end{theorem}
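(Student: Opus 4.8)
The plan is to establish this abstract statement by the improved energy method, i.e.\ regularization followed by a weak-compactness limit, so that no Strichartz estimate is ever invoked. First I would regularize through the Yosida-type resolvent $J_m:=(1+\tfrac{1}{m}S)^{-1}$, a selfadjoint contraction on $X$ that commutes with $S$, satisfies $\|J_m\varphi\|_{X_S}\leq\|\varphi\|_{X_S}$, and smooths $X$ into $D(S)\subset X_S$. I then study the approximate problem
\begin{equation*}
i\partial_t u_m = S u_m + J_m g(J_m u_m),\qquad u_m(0)=u_0.
\end{equation*}
The composite $J_m g(J_m\,\cdot)$ is locally Lipschitz from $X$ to $X$ --- the smoothing of $J_m$ compensating the loss recorded by (G2) --- while $e^{-itS}$ is a unitary group on $X$ because $S$ is nonnegative and selfadjoint; writing the equation in Duhamel form and applying the contraction mapping principle therefore yields a unique local solution, which lifts to $u_m\in C(I_m,X_S)$ since $u_0\in X_S$.

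Next I would extract the two conservation laws of the regularized flow and use them to bound $\|u_m\|_{X_S}$ uniformly in $m$. Pairing the equation with $u_m$ and taking imaginary parts, the linear contribution vanishes because $S$ is selfadjoint, and the nonlinear contribution vanishes because $\mathrm{Im}\langle J_m g(J_m u_m),u_m\rangle=\mathrm{Im}\langle g(J_m u_m),J_m u_m\rangle=0$ by the gauge condition (G4) together with the selfadjointness of $J_m$; hence $\|u_m(t)\|_X=\|u_0\|_X$. For the energy I introduce $E_m(\varphi):=\tfrac12\|S^{1/2}\varphi\|_X^2+G(J_m\varphi)$, whose derivative is $S\varphi+J_m g(J_m\varphi)$ by (G1) and the chain rule; differentiating $E_m(u_m)$ along the flow and using the skew-adjointness of multiplication by $-i$ forces $\frac{d}{dt}E_m(u_m)=0$. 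Energy conservation then reads $\tfrac12\|S^{1/2}u_m(t)\|_X^2=E_m(u_0)-G(J_m u_m(t))$, and I would control the right-hand side by applying (G3) with $v=0$, which estimates $|G(J_m u_m)|$ by the lower-order quantity $C_\delta\|J_m u_m\|_X\leq C_\delta\|u_0\|_X$. A continuity (bootstrap) argument then produces a time interval $I_M$ depending only on $M=\|u_0\|_{X_S}$ on which $\sup_{t\in I_M}\|u_m(t)\|_{X_S}\leq C(M)$ uniformly in $m$.

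With this uniform bound, $\{u_m\}$ is bounded in $L^\infty(I_M,X_S)$ and, by the equation, $\{\partial_t u_m\}$ is bounded in $L^\infty(I_M,X_S^*)$; I would pass to a subsequence converging weak-$*$ to a limit $u$ in these spaces with $u_m(t)\rightharpoonup u(t)$ in $X_S$ for a.e.\ $t$. The delicate point is the nonlinear term, since weak convergence does not commute with $g$: here I would invoke the closedness condition (G5) to preserve the gauge/mass identity in the limit, while the identification of the weak-$*$ limit of $J_m g(J_m u_m)$ with $g(u)$ rests on a compactness argument in the spirit of Lemma \ref{lem3.2} (strong local convergence via Rellich, then Hardy--Littlewood--Sobolev). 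This produces a weak solution $u\in L^\infty(I_M,X_S)\cap W^{1,\infty}(I_M,X_S^*)$ of (\ref{e4.1}) in $X_S^*$, with $u\in C_w(I_M,X_S)$ and $\|u(t)\|_X=\|u_0\|_X$.

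Finally, under the uniqueness hypothesis I would upgrade weak to strong continuity. Uniqueness forces the whole family $\{u_m\}$ to converge and allows the construction to be restarted from any time, which, combined with weak lower semicontinuity of the $X_S$-norm and the conservation laws, renders $t\mapsto\|u(t)\|_{X_S}$ continuous; since $X_S$ is a Hilbert space, weak continuity plus norm continuity gives $u\in C(I_M,X_S)$. The equation $\partial_t u=-i(Su+g(u))$ then has right-hand side in $C(I_M,X_S^*)$ by (G2), so $u\in C^1(I_M,X_S^*)$, and $E(u(t))=E(u_0)$ follows by differentiating $E$ along the now-genuine solution exactly as in the regularized computation. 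I expect the main obstacle to be precisely the passage to the limit in the nonlinear term: isolating the weak-compactness/closedness mechanism (G5) and identifying the nonlinear limit as $g(u)$ is the crux, and it is here that the nonlocal Choquard structure forces the auxiliary convergence recorded in Lemma \ref{lem3.2}.
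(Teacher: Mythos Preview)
The paper does not prove Theorem~\ref{thm4.1}: it is quoted verbatim from Okazawa--Suzuki--Yokota \cite{Okazawa et al. 2012} and then \emph{applied} via Lemmas~\ref{lem condi} and~\ref{lem unique}. Your outline---Yosida regularization $J_m=(1+\tfrac1m S)^{-1}$, approximate conservation laws, a bootstrap from (G3) to get a uniform $X_S$-bound on an interval depending only on $M$, weak-$*$ compactness, and finally the upgrade to strong continuity under uniqueness---is indeed the architecture of the proof in \cite{Okazawa et al. 2012}, so in that sense your reconstruction is faithful.

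There is, however, a conflation of the abstract and the concrete at the step you yourself flag as the crux. You write that the identification of the weak-$*$ limit of $J_m g(J_m u_m)$ with $g(u)$ ``rests on a compactness argument in the spirit of Lemma~\ref{lem3.2} (strong local convergence via Rellich, then Hardy--Littlewood--Sobolev)''. But Lemma~\ref{lem3.2}, Rellich, and HLS are features of the \emph{concrete} triple $H^1(\mathbb R^N)\subset L^2(\mathbb R^N)\subset H^{-1}(\mathbb R^N)$ with the Choquard nonlinearity; none of them is available in the abstract Hilbert-space setting $X_S\subset X\subset X_S^*$, and Theorem~\ref{thm4.1} must be proved from (G1)--(G5) alone. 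In \cite{Okazawa et al. 2012} the mechanism is purely abstract: (G5) combined with (G4) forces $\mathrm{Im}\int_I\langle f,u\rangle\,dt=0$ for the limit pair $(u,f)$, which yields $\|u(t)\|_X=\|u_0\|_X$ and hence upgrades the weak $X$-convergence $u_m(t)\rightharpoonup u(t)$ to strong $X$-convergence; (G3) then transfers $G(J_m u_m(t))\to G(u(t))$, and the approximate energy identity closes the argument. Lemma~\ref{lem3.2} enters only later, in the \emph{verification} of (G5) for the specific Choquard nonlinearity (see the proof of Lemma~\ref{lem condi}), not in the proof of the abstract theorem itself. Keep the two layers separate: Theorem~\ref{thm4.1} is the black box, and Lemma~\ref{lem3.2} is part of checking its hypotheses.
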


\textbf{3.2.  Local well-posedness to ($\mathrm{CH_b}$)}. In this
subsection,  we use Theorem \ref{thm4.1} to prove Theorem \ref{thm
local}. To this end, we first give some lemmas.

\begin{lemma}\label{lem 4.1} $\mathrm{(}$\cite{Suzuki 2013}$\mathrm{)}$
Assume that $\alpha_1$, $\beta_1$, $\gamma_1$, $\rho_1\in
[1,\infty]$,  $k(x,y)\in L_x^{\beta_1}L_{y}^{\alpha_1}\cap
L_y^{\beta_1}L_{x}^{\alpha_1}$ and
\begin{equation*}
\alpha_1\leq \rho_1\leq \beta_1,\ \
\frac{1}{\alpha_1}+\frac{1}{\beta_1}+\frac{1}{\gamma_1}=1+\frac{1}{\rho_1}.
\end{equation*}
Then the operator defined by
\begin{equation*}
Kf(x):=\int_{\mathbb{R}^N}k(x,y)f(y)dy
\end{equation*}
is linear and bounded from $L^{\gamma_1}(\mathbb{R}^N)$ to
$L^{\rho_1}(\mathbb{R}^N)$. Moreover,
\begin{equation*}
\|Kf\|_{L^{\rho_1}}\leq (\|k\|_{L_x^{\beta_1}L_{y}^{\alpha_1}}\vee
\|k\|_{L_y^{\beta_1}L_{x}^{\alpha_1}})\|f\|_{L^{\gamma_1}}\
\mathrm{for\ any\ } f\in L^{\gamma_1}(\mathbb{R}^N).
\end{equation*}
\end{lemma}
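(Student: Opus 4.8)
The plan is to reduce the general statement to two endpoint estimates and then interpolate. First I would record that the hypotheses $\alpha_1\le\rho_1\le\beta_1$ together with the scaling relation determine a consistent interpolation scale: writing $\frac{1}{\rho_1}=\frac{1-\theta}{\alpha_1}+\frac{\theta}{\beta_1}$ for the unique $\theta\in[0,1]$, a one-line substitution into $\frac{1}{\alpha_1}+\frac{1}{\beta_1}+\frac{1}{\gamma_1}=1+\frac{1}{\rho_1}$ shows that $\frac{1}{\gamma_1}=\frac{1-\theta}{\beta_1'}+\frac{\theta}{\alpha_1'}$. Thus the two natural endpoints are $\theta=0$, where $(\rho_1,\gamma_1)=(\alpha_1,\beta_1')$, and $\theta=1$, where $(\rho_1,\gamma_1)=(\beta_1,\alpha_1')$.

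For the endpoint $\theta=1$ I would bound $|Kf(x)|\le\int|k(x,y)||f(y)|\,dy$ and apply H\"older in $y$ with the dual pair $(\alpha_1,\alpha_1')$ to get $|Kf(x)|\le\|k(x,\cdot)\|_{L_y^{\alpha_1}}\|f\|_{L^{\alpha_1'}}$; taking the $L_x^{\beta_1}$ norm then gives $\|Kf\|_{L^{\beta_1}}\le\|k\|_{L_x^{\beta_1}L_y^{\alpha_1}}\|f\|_{L^{\alpha_1'}}$. For the endpoint $\theta=0$ I would instead start from Minkowski's integral inequality (valid since $\alpha_1\ge1$), namely $\|Kf\|_{L_x^{\alpha_1}}\le\int\|k(\cdot,y)\|_{L_x^{\alpha_1}}|f(y)|\,dy$, and then apply H\"older in $y$ with the pair $(\beta_1,\beta_1')$ to obtain $\|Kf\|_{L^{\alpha_1}}\le\|k\|_{L_y^{\beta_1}L_x^{\alpha_1}}\|f\|_{L^{\beta_1'}}$. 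Both estimates use only the two mixed-norm quantities appearing in the statement, which is exactly why the kernel hypothesis is imposed in both orders of integration.

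With these two bounds in hand, $K$ is a fixed linear operator mapping $L^{\beta_1'}\to L^{\alpha_1}$ with norm at most $\|k\|_{L_y^{\beta_1}L_x^{\alpha_1}}$ and $L^{\alpha_1'}\to L^{\beta_1}$ with norm at most $\|k\|_{L_x^{\beta_1}L_y^{\alpha_1}}$. By the Riesz--Thorin interpolation theorem, for each $\theta\in[0,1]$ it maps $L^{\gamma_1}\to L^{\rho_1}$ with norm at most the geometric mean $\|k\|_{L_y^{\beta_1}L_x^{\alpha_1}}^{\,1-\theta}\|k\|_{L_x^{\beta_1}L_y^{\alpha_1}}^{\,\theta}$, where the interpolated domain and target exponents are precisely the $\gamma_1$ and $\rho_1$ identified in the first paragraph. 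Since the geometric mean is dominated by the maximum, this yields $\|Kf\|_{L^{\rho_1}}\le\big(\|k\|_{L_x^{\beta_1}L_y^{\alpha_1}}\vee\|k\|_{L_y^{\beta_1}L_x^{\alpha_1}}\big)\|f\|_{L^{\gamma_1}}$, which is the claim.

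I expect the only care needed to lie at two routine points rather than in any genuine obstacle. The first is bookkeeping: confirming that the exponents produced by Riesz--Thorin coincide with $\gamma_1,\rho_1$, which is the scaling identity already checked. The second is the treatment of the extreme exponents in $\{1,\infty\}$ — verifying that Minkowski's inequality and the two H\"older pairings degenerate correctly when $\alpha_1$ or $\beta_1$ is $1$ or $\infty$, and that Riesz--Thorin is applied on a common dense class (for instance simple functions) on which $K$ is unambiguously defined and finite. The mathematical content is entirely concentrated in the two endpoint estimates, each of which is immediate from Minkowski's integral inequality and H\"older's inequality.
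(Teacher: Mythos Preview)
Your argument is correct: the two endpoint bounds via H\"older and Minkowski are exactly right, the scaling identity you checked shows that Riesz--Thorin lands on the desired pair $(\gamma_1,\rho_1)$, and the geometric-mean-to-maximum step gives the stated constant. Note, however, that the paper does not actually prove this lemma; it is quoted from \cite{Suzuki 2013} without proof, so there is no ``paper's own proof'' to compare against. Your interpolation approach is in fact the standard route to this type of mixed-norm Schur/Young estimate and is what one would expect the cited reference to contain.
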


\begin{lemma}\label{lem 4.2}
Assume that $\alpha_1$, $\beta_1\in [1,\infty]$, $\alpha_1\leq
\beta_1$ and $1/\alpha_1+1/\beta_1\leq 4/N$. Set
\begin{equation*}
\frac{1}{\gamma_1}:=1-\frac{1}{2}\left(\frac{1}{\alpha_1}+\frac{1}{\beta_1}\right).
\end{equation*}
Assume that $k(x,y)\in L_x^{\beta_1}L_{y}^{\alpha_1}$ is symmetric,
that is, $k(x,y)=k(y,x)$ for any $x, y\in \mathbb{R}^N$. Then, for
any $f,\ g\in L^{\gamma_1}(\mathbb{R}^N)$,
\begin{equation*}
\|gK(f)\|_{L^1}\leq
\|k\|_{L_x^{\beta_1}L_{y}^{\alpha_1}}\|f\|_{L^{\gamma_1}}\|g\|_{L^{\gamma_1}}
\end{equation*}
and
\begin{equation*}
\|K(f)\|_{L^{\gamma_1'}}\leq
\|k\|_{L_x^{\beta_1}L_{y}^{{\alpha_1}}}\|f\|_{L^{\gamma_1}}.
\end{equation*}
\end{lemma}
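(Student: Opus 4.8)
The plan is to derive both inequalities from Lemma \ref{lem 4.1}, treating the bound on $\|K(f)\|_{L^{\gamma_1'}}$ as the core estimate and obtaining the bilinear bound on $\|gK(f)\|_{L^1}$ from it by a single application of H\"older's inequality. The only real work is exponent bookkeeping: one must check that the hypotheses of Lemma \ref{lem 4.1} are met for the specific target exponent $\rho_1=\gamma_1'$, after which the symmetry assumption $k(x,y)=k(y,x)$ collapses the maximum $\|k\|_{L_x^{\beta_1}L_y^{\alpha_1}}\vee\|k\|_{L_y^{\beta_1}L_x^{\alpha_1}}$ appearing in Lemma \ref{lem 4.1} into the single norm $\|k\|_{L_x^{\beta_1}L_y^{\alpha_1}}$ that appears in the statement.

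First I would prove $\|K(f)\|_{L^{\gamma_1'}}\leq \|k\|_{L_x^{\beta_1}L_{y}^{\alpha_1}}\|f\|_{L^{\gamma_1}}$. Applying Lemma \ref{lem 4.1} with $\rho_1:=\gamma_1'$, I must verify the two constraints it requires. For the sum relation, note that the definition of $\gamma_1$ gives $\tfrac{1}{\gamma_1'}=1-\tfrac{1}{\gamma_1}=\tfrac{1}{2}\left(\tfrac{1}{\alpha_1}+\tfrac{1}{\beta_1}\right)$, whence
\begin{equation*}
\frac{1}{\alpha_1}+\frac{1}{\beta_1}+\frac{1}{\gamma_1}
=\frac{1}{\alpha_1}+\frac{1}{\beta_1}+1-\frac{1}{2}\left(\frac{1}{\alpha_1}+\frac{1}{\beta_1}\right)
=1+\frac{1}{\gamma_1'},
\end{equation*}
which is exactly the required identity with $\rho_1=\gamma_1'$. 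For the sandwich condition $\alpha_1\leq\gamma_1'\leq\beta_1$, observe that $\tfrac{1}{\gamma_1'}$ is the arithmetic mean of $\tfrac{1}{\alpha_1}$ and $\tfrac{1}{\beta_1}$, so the hypothesis $\alpha_1\leq\beta_1$ (equivalently $\tfrac{1}{\beta_1}\leq\tfrac{1}{\alpha_1}$) forces $\tfrac{1}{\beta_1}\leq\tfrac{1}{\gamma_1'}\leq\tfrac{1}{\alpha_1}$, i.e. $\alpha_1\leq\gamma_1'\leq\beta_1$; the constraint $\tfrac{1}{\alpha_1}+\tfrac{1}{\beta_1}\leq 4/N\leq 2$ together with $\alpha_1,\beta_1\geq1$ guarantees $\gamma_1,\gamma_1'\in[1,\infty]$ so that the $L^{\gamma_1}$ and $L^{\gamma_1'}$ norms are well defined. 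Lemma \ref{lem 4.1} then yields the bound with $\|k\|_{L_x^{\beta_1}L_y^{\alpha_1}}\vee\|k\|_{L_y^{\beta_1}L_x^{\alpha_1}}$ as constant, and symmetry of $k$ reduces this to $\|k\|_{L_x^{\beta_1}L_y^{\alpha_1}}$.

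With this in hand, the first inequality follows immediately: since $\gamma_1$ and $\gamma_1'$ are H\"older-conjugate, H\"older's inequality gives
\begin{equation*}
\|gK(f)\|_{L^1}\leq \|g\|_{L^{\gamma_1}}\|K(f)\|_{L^{\gamma_1'}}\leq \|k\|_{L_x^{\beta_1}L_{y}^{\alpha_1}}\|f\|_{L^{\gamma_1}}\|g\|_{L^{\gamma_1}}.
\end{equation*}
I expect no genuine obstacle here; the single point demanding care is confirming the sandwich inequality $\alpha_1\leq\gamma_1'\leq\beta_1$, since Lemma \ref{lem 4.1} is inapplicable without it, and this is precisely where the ordering hypothesis $\alpha_1\leq\beta_1$ is used.
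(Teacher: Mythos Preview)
Your proposal is correct and follows essentially the same route as the paper: apply Lemma \ref{lem 4.1} with $\rho_1=\gamma_1'$ to bound $\|K(f)\|_{L^{\gamma_1'}}$, then use H\"older's inequality to obtain the bilinear estimate. Your version is in fact more complete, since you explicitly verify the sandwich condition $\alpha_1\leq\gamma_1'\leq\beta_1$ and explain how the symmetry of $k$ collapses the maximum in Lemma \ref{lem 4.1} to a single mixed norm, both of which the paper's proof leaves implicit.
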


\begin{proof}
Applying Lemma \ref{lem 4.1} with $\rho_1=\gamma_1'$ and by using
the H\"{o}lder inequality, we obtain that
\begin{equation*}
\|gK(f)\|_{L^1}\leq \|K(f)\|_{\rho_1}\|g\|_{\rho_1'}\leq
\|k\|_{L_x^{\beta_1}L_{y}^{\alpha_1}}\|f\|_{L^{\gamma_1}}\|g\|_{L^{\rho_1'}},
\end{equation*}
which completes the proof.
\end{proof}

\begin{lemma}\label{lem condi}
Let $N\geq 3$, $\alpha\in ((N-4)_+,N)$, $2\leq p<(N+\alpha)/(N-2)$
and $a=\pm 1$. Define
\begin{equation}\label{e3.16}
g(u):=-a(I_\alpha\ast|u|^p)|u|^{p-2}u,\ u\in H^1(\mathbb{R}^N)
\end{equation}
and
\begin{equation}\label{e3.17}
G(u):=-\frac{a}{2p}\int_{\mathbb{R}^N}(I_\alpha\ast|u|^p)|u|^{p}dx,\
u\in H^1(\mathbb{R}^N).
\end{equation}
Then $g$ and $G$ satisfy conditions (G1)-(G5) as in subsection 3.1
with $S=\mathcal{L}_b$, $X=L^2(\mathbb{R}^N)$,
$X_S=H^1(\mathbb{R}^N)$ and $X_S^*=H^{-1}(\mathbb{R}^N)$.
\end{lemma}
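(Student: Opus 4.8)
The plan is to verify the five structural conditions (G1)--(G5) one at a time, reserving the bulk of the work for the Lipschitz estimate (G2). Throughout, the two workhorses are the Hardy--Littlewood--Sobolev inequality (Lemma \ref{lem HLS}) together with the Sobolev embedding $H^1(\mathbb{R}^N)\hookrightarrow L^r(\mathbb{R}^N)$ for $2\le r\le 2^*$, and the pointwise algebraic inequalities valid for $p\ge 2$,
\begin{equation*}
\bigl||u|^p-|v|^p\bigr|\lesssim (|u|^{p-1}+|v|^{p-1})|u-v|,\qquad \bigl||u|^{p-2}u-|v|^{p-2}v\bigr|\lesssim (|u|^{p-2}+|v|^{p-2})|u-v|,
\end{equation*}
the second being the kind of inequality drawn from \cite{Cao-Li-Luo 2015}. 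The hypothesis $\alpha\in((N-4)_+,N)$ together with $2\le p<p^b$ is exactly what makes the exponents appearing in the repeated applications of Lemma \ref{lem HLS} admissible and keeps $|u|^{p-2}$ meaningful.

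The two easy conditions are (G4) and (G5). For (G4) I would simply note that $\langle g(u),u\rangle_{H^{-1},H^1}=-a\int_{\mathbb{R}^N}(I_\alpha\ast|u|^p)|u|^p\,dx$ is real, so its imaginary part vanishes. For (G5) I would invoke Lemma \ref{lem3.2}: since $w_n(t)\to w(t)$ weakly in $H^1$ for almost every $t$, we get $g(w_n(t))\to g(w(t))$ weakly in $H^{-1}$ for almost every $t$; pairing against test functions of the form $\phi(t)\psi$ and comparing with the weak-$*$ limit identifies $f=g(w)$ for almost every $t$. Then, by (G4), both $\mathrm{Im}\langle f(t),w(t)\rangle$ and each $\mathrm{Im}\langle g(w_n(t)),w_n(t)\rangle$ are identically zero, so the required identity reduces to the trivial $0=0$.

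For (G1), I would compute the Gâteaux derivative $\frac{d}{dt}G(u+tv)\big|_{t=0}$ directly. Differentiating the product $\int_{\mathbb{R}^N}(I_\alpha\ast|u+tv|^p)|u+tv|^p$ and using the symmetry $\int(I_\alpha\ast f)g=\int f(I_\alpha\ast g)$ of the Riesz convolution, both factors contribute equally and $\frac{d}{dt}|u+tv|^p\big|_{t=0}=p|u|^{p-2}\mathrm{Re}(\bar uv)$, so the prefactor $\tfrac{1}{2p}$ is exactly cancelled and the derivative equals $\mathrm{Re}\langle g(u),v\rangle$. That $u\mapsto g(u)$ is continuous from $H^1$ to $H^{-1}$, which upgrades this to $G\in C^1$, follows from the same estimates used in (G2).

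The main work, and the main obstacle, is (G2). Writing
\begin{equation*}
g(u)-g(v)=-a\bigl(I_\alpha\ast(|u|^p-|v|^p)\bigr)|u|^{p-2}u-a\bigl(I_\alpha\ast|v|^p\bigr)\bigl(|u|^{p-2}u-|v|^{p-2}v\bigr),
\end{equation*}
I would estimate $\|g(u)-g(v)\|_{H^{-1}}$ by duality, pairing each summand against $\varphi\in H^1$, then applying the pointwise inequalities above, Lemma \ref{lem HLS} to dispose of the convolutions, and finally Hölder and Sobolev to bound everything by $C(M)\|u-v\|_{H^1}\|\varphi\|_{H^1}$ whenever $\|u\|_{H^1},\|v\|_{H^1}\le M$. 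The delicate part is the exponent bookkeeping, which is precisely where $\alpha>(N-4)_+$ and $p\ge 2$ are forced. Condition (G3) then follows from a variant of this computation: the difference $G(u)-G(v)$ is controlled by $C(M)\|u-v\|_{L^r}$ for some $r\in(2,2^*)$ (strict subcriticality $p<p^b$ guarantees $r<2^*$), and interpolating $\|u-v\|_{L^r}\le\|u-v\|_{L^2}^{1-\theta}\|u-v\|_{L^{2^*}}^{\theta}\lesssim_M\|u-v\|_{L^2}^{1-\theta}$ followed by a Young-type splitting (small values absorbed into $\delta$, large values into $C_\delta(M)\|u-v\|_{L^2}$) yields the required Hölder-like bound with the weaker norm $X=L^2$.
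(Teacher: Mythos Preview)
Your proposal is correct, and your handling of (G2), (G4), and (G5) matches the paper's argument closely. There are, however, two genuine methodological differences worth noting.

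For (G1), the paper does not appeal to the ``Gâteaux derivative plus continuity implies Fréchet $C^1$'' theorem. Instead it expands $|u+v|^p$ explicitly via the pointwise Taylor-type inequality
\[
\bigl||\tilde a+\tilde b|^m-|\tilde a|^m-m|\tilde a|^{m-2}\tilde a\,\tilde b\bigr|\le C\bigl(|\tilde a|^{m-m^*}|\tilde b|^{m^*}+|\tilde b|^m\bigr),\qquad m^*=\min\{m,2\},
\]
which is the inequality actually taken from \cite{Cao-Li-Luo 2015} (not the elementary $|u|^{p-2}u$ difference estimate you cited), and then bounds the remainder directly by $O(\|v\|_{H^1}^r)$ for some $r>1$. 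Your route is cleaner but invokes an abstract differentiability theorem; the paper's is more hands-on.

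The more substantial divergence is (G3). You bound $|G(u)-G(v)|$ directly by Hardy--Littlewood--Sobolev with the diagonal exponent $2N/(N+\alpha)$, obtaining $|G(u)-G(v)|\le C(M)\|u-v\|_{L^r}$ with $r=2Np/(N+\alpha)\in(2,2^*)$, and then interpolate. The paper instead splits the Riesz kernel $k(x,y)=\mathcal{A}|x-y|^{-(N-\alpha)}$ into a bounded truncation $k_R$ and a singular remainder $l_R$, invoking the auxiliary Lemmas~\ref{lem 4.1}--\ref{lem 4.2} to show the $l_R$ contribution is $\le\delta$ for $R$ large, while the $k_R$ piece reduces to $L^p$ estimates and the same interpolation. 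Your argument is shorter and entirely sufficient for the Riesz kernel; the paper's kernel-splitting machinery is inherited from \cite{Suzuki 2013}, where it was designed to accommodate more general convolution kernels not necessarily of Riesz type.
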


\begin{proof}
When $p=2$, the lemma is proved in \cite{Suzuki 2013}. So in the
following, we assume that $p>2$. By the definition of $g$, (G4)
holds.

We verify (G1). By using the following inequality from
\cite{Cao-Li-Luo 2015}
\begin{equation*}
\begin{split}
&||\tilde{a}+\tilde{b}|^m-|\tilde{a}|^m-m|\tilde{a}|^{m-2}\tilde{a}\tilde{b}|\leq
C(|\tilde{a}|^{m-m^*}|\tilde{b}|^{m^*}+|\tilde{b}|^m),\\
& \tilde{a}, \tilde{b}\in \mathbb{R}, m>1, m^*=\min\{m,2\}, C
\mathrm{\ is\ independent\ of}\  \tilde{a}, \tilde{b},
\end{split}
\end{equation*}
and the Young inequality
\begin{equation*}
\tilde{a}^{\frac{1}{r}}+\tilde{b}^{\frac{1}{r'}}\leq
\frac{\tilde{a}}{r}+\frac{\tilde{b}}{r'},
\end{equation*}
we have, for $p/2\geq 2$,
\begin{equation}\label{e4.3}
\begin{split}
&(|u|^2+2\mathrm{Re}(u\bar{v})+|v|^2)^{p/2}\\
 & =
|u|^p+p|u|^{p-2}\mathrm{Re}(u\bar{v})+p/2|u|^{p-2}|v|^2\\
&\ \ \ \ +O(|u|^{p-4}(2\mathrm{Re}(u\bar{v})+|v|^2)^2+(2\mathrm{Re}(u\bar{v})+|v|^2)^{p/2})\\
& = |u|^p+p|u|^{p-2}\mathrm{Re}(u\bar{v})+O(|u|^{p-2}|v|^2+|v|^p)\\
&:= |u|^p+p|u|^{p-2}\mathrm{Re}(u\bar{v})+h(u,v),
\end{split}
\end{equation}
and for  $1<p/2<2$,
\begin{equation}\label{e4.4}
\begin{split}
&(|u|^2+2\mathrm{Re}(u\bar{v})+|v|^2)^{p/2}\\
&=
|u|^p+p|u|^{p-2}\mathrm{Re}(u\bar{v})+p/2|u|^{p-2}|v|^2\\
&\ \ \ \ \ +O(2\mathrm{Re}(u\bar{v})+|v|^2)^{p/2}\\
&=  |u|^p+p|u|^{p-2}\mathrm{Re}(u\bar{v})+O(|u|^{p/2}|v|^{p/2}+|v|^p)\\
&:= |u|^p+p|u|^{p-2}\mathrm{Re}(u\bar{v})+h(u,v).
\end{split}
\end{equation}
Note that we can write $h(u, v)$ in a uniform form for all $p>2$,
\begin{equation*}
h(u,v)=|u|^{p-r}|v|^{r}+|v|^p\ \ \mathrm{for\ some\ }1<r<p.
\end{equation*}
By using (\ref{e4.3}), (\ref{e4.4}), the Hardy-Littlewood-Sobolev
inequality, the Young inequality, the H\"{o}lder inequality, the
Sobolev imbedding theorem, and the equality
\begin{equation*}
|u+v|^p=(|u+v|^2)^{p/2}=(|u|^2+2\mathrm{Re}(u\bar{v})+|v|^2)^{p/2},
\end{equation*}
for any given $u\in H^1(\mathbb{R}^N)$, we obtain that
\begin{equation*}
\begin{split}
&|G(u+v)-G(u)-\mathrm{Re}\langle g(u),v\rangle_{H^{-1},H^1}|\\
=&\left|\frac{1}{2p}\int_{\mathbb{R}^N}
(I_\alpha\ast|u+v|^{p})|u+v|^pdx-\frac{1}{2p}\int_{\mathbb{R}^N}(I_\alpha\ast|u|^{p})|u|^pdx\right.\\
&\ \ \ \left.-\int_{\mathbb{R}^N}(I_\alpha\ast|u|^{p})|u|^{p-2}\mathrm{Re}(u\bar{v})dx\right|\\
=& \left|\frac{1}{2p}\int_{\mathbb{R}^N}
(I_\alpha\ast(|u|^p+p|u|^{p-2}\mathrm{Re}(u\bar{v})+h(u,v)))\right.\\
&\ \ \ \  \ \ \qquad\qquad \times(|u|^p+p|u|^{p-2}\mathrm{Re}(u\bar{v})+h(u,v))dx\\
&\ \
\left.-\frac{1}{2p}\int_{\mathbb{R}^N}(I_\alpha\ast|u|^{p})|u|^pdx
-\int_{\mathbb{R}^N}(I_\alpha\ast|u|^{p})|u|^{p-2}\mathrm{Re}(u\bar{v})dx\right|\\
\lesssim &\int_{\mathbb{R}^N}(I_\alpha\ast
h(u,v))(|u|^p+p|u|^{p-2}\mathrm{Re}(u\bar{v})+h(u,v))dx\\
&\ \ +\int_{\mathbb{R}^N}(I_\alpha\ast
(p|u|^{p-2}\mathrm{Re}(u\bar{v})))(p|u|^{p-2}\mathrm{Re}(u\bar{v})+h(u,v))dx\\
\lesssim&
\int_{\mathbb{R}^N}(I_\alpha\ast(|v|^p+|u|^{p-r}|v|^r))(|u|^p+|v|^p)dx\\
&\ \ \ \
+\int_{\mathbb{R}^N}(I_\alpha\ast(|u|^{p-1}|v|))(|u|^{p-1}|v|+|v|^p)dx\\
\lesssim& \|v\|_{H^1}^{2p}+\|v\|_{H^1}^r,
\end{split}
\end{equation*}
which implies that (G1) holds.

We verify (G2). By using the following inequalities
\begin{equation}\label{e3.12}
||u|^p-|v|^p|\lesssim (|u|+|v|)^{p-1}|u-v|,
\end{equation}
\begin{equation*}
||u|^{p-2}u-|v|^{p-2}v|\lesssim (|u|^{p-2}+|v|^{p-2})|u-v|,
\end{equation*}
the Hardy-Littlewood-Sobolev inequality, the H\"{o}lder inequality
and the Sobolev imbedding theorem, we have, for any $\varphi\in
H^1(\mathbb{R}^N)$,
\begin{equation*}
\begin{split}
&|\langle g(u)-g(v),\varphi\rangle_{H^{-1},H^1}|\\
=&\left|\int_{\mathbb{R}^N}(I_\alpha\ast|u|^p)|u|^{p-2}u\bar{\varphi}dx
-\int_{\mathbb{R}^N}(I_\alpha\ast|v|^p)|v|^{p-2}v\bar{\varphi}dx\right|\\
\lesssim& \left|\int_{\mathbb{R}^N}(I_\alpha\ast|u|^p)(|u|^{p-2}u-|v|^{p-2}v)\bar{\varphi}\right|\\
&\ \
+\left|\int_{\mathbb{R}^N}(I_\alpha\ast(|u|^p-|v|^p))|v|^{p-2}v\bar{\varphi}dx\right|\\
\lesssim& \left|\int_{\mathbb{R}^N}(I_\alpha\ast|u|^p)(|u|^{p-2}+|v|^{p-2})|u-v|\bar{\varphi}dx\right|\\
&\ \
+\left|\int_{\mathbb{R}^N}(I_\alpha\ast((|u|+|v|)^{p-1}|u-v|))|v|^{p-2}v\bar{\varphi}dx\right|\\
\lesssim&\|u\|_{H^1}^p(\|u\|_{H^1}^{p-2}+\|v\|_{H^1}^{p-2})\|u-v\|_{H^1}\|\varphi\|_{H^1}\\
&\ \ \ +
\|v\|_{H^1}^{p-1}(\|u\|_{H^1}^{p-1}+\|v\|_{H^1}^{p-1})\|u-v\|_{H^1}\|\varphi\|_{H^1},
\end{split}
\end{equation*}
which implies that
\begin{equation*}
\begin{split}
\|g(u)-g(v)\|_{H^{-1}}\lesssim C(M)\|u-v\|_{H^1}
\end{split}
\end{equation*}
for $\|u\|_{H^1}, \|v\|_{H^1}\leq M$. Thus, (G2) holds.

We verify (G3). Firstly, we define
$k(x,y):=\frac{\mathcal{A}}{|x-y|^{N-\alpha}}$ and for any $R>0$,
define
\begin{equation*}
k_R(x,y):=\left\{\begin{array}{ll} k(x,y),\ & k(x,y)\leq R,\\
R,\ & k(x,y)> R\\
\end{array}\right.
\end{equation*}
and
\begin{equation*}
l_R(x,y):=k(x,y)-k_R(x,y)=\left\{\begin{array}{ll} k(x,y)-R,\ & |x-y|< (\frac{\mathcal{A}}{R})^{1/(N-\alpha)},\\
0,\ & |x-y|\geq (\frac{\mathcal{A}}{R})^{1/(N-\alpha)}.
\end{array}\right.
\end{equation*}
It is easy to see that the following facts hold:

(i) $k_R(x,y)=k_R(y,x)$, $l_R(x,y)=l_R(y,x)$;

(ii) $k_R(x,y)\in L_y^{\infty}L_x^{\infty}$;

(iii) For any $y\in \mathbb{R}^N$ and $1\leq \alpha_1<N/(N-\alpha)$,
\begin{equation*}
\begin{split}
\int_{\mathbb{R}^N}|l_R(x,y)|^{\alpha_1}dx &= \int_{|x-y|<
(\frac{\mathcal{A}}{R})^{1/(N-\alpha)}}|l_R(x,y)|^{\alpha_1}dx\\
&\lesssim
\int_{|x-y|<
(\frac{\mathcal{A}}{R})^{1/(N-\alpha)}}\frac{1}{|x-y|^{(N-\alpha)\alpha_1}}dx\\
&\lesssim\int_0^{(\frac{\mathcal{A}}{R})^{1/(N-\alpha)}}
r^{-(N-\alpha)\alpha_1}r^{N-1}dr\\
&\lesssim\left(\frac{\mathcal{A}}{R}\right)^{\frac{N-(N-\alpha)\alpha_1}{N-\alpha}},
\end{split}
\end{equation*}
which implies that $l_R(x,y)\in L_y^{\infty}L_x^{\alpha_1}$ and
$l_R(x,y)\to 0$ in $L_y^{\infty}L_x^{\alpha_1}$ as $R\to \infty$ for
any $1\leq \alpha_1<N/(N-\alpha)$.

For any given $M>0$, and any $u, v\in H^1(\mathbb{R}^N)$ with
$\|u\|_{H^1}, \|v\|_{H^1}\leq M$, we calculate
\begin{equation}\label{e3.15}
\begin{split}
|G(u)-G(v)|&=\left|\int_{\mathbb{R}^N}(I_\alpha\ast|u|^p)|u|^pdx-\int_{\mathbb{R}^N}(I_\alpha\ast|v|^p)|v|^pdx\right|\\
&=\left|\int_{\mathbb{R}^N}(I_\alpha\ast(|u|^p+|v|^p))(|u|^p-|v|^p)dx\right|\\
&=\int_{\mathbb{R}^N}k_R(x,y)(|u(y)|^p+|v(y)|^p)(|u(x)|^p-|v(x)|^p)dxdy\\
&\ \qquad
+\int_{\mathbb{R}^N}l_R(x,y)(|u(y)|^p+|v(y)|^p)(|u(x)|^p-|v(x)|^p)dxdy\\
&:=I+II.
\end{split}
\end{equation}

In the following, we calculate $I$ and $II$. Since
$p<\frac{N+\alpha}{N-2}$, we can choose $\alpha_1\in
[1,N/(N-\alpha))$ such that $1/\alpha_1\leq 2(1-p/2^*)$. For such
choice of $\alpha_1$, we define
\begin{equation*}
\frac{1}{\gamma_1}:=1-\frac{1}{2}\left(\frac{1}{\alpha_1}+\frac{1}{\infty}\right),
\end{equation*}
then $2\leq p\gamma_1\leq 2^*$. For any $\delta>0$, we use Lemma
\ref{lem 4.2} and the Sobolev imbedding theorem to have
\begin{equation}\label{e3.14}
\begin{split}
II&\lesssim
\|l_R\|_{L_x^{\infty}L_y^{\alpha_1}}\||u|^p+|v|^p\|_{L^{\gamma_1}}\||u|^p-|v|^p\|_{L^{\gamma_1}}\\
&\lesssim
\|l_R\|_{L_x^{\infty}L_y^{\alpha_1}}(\|u\|_{L^{p\gamma_1}}^{2p}+\|v\|_{L^{p\gamma_1}}^{2p})\\
&\lesssim
\|l_R\|_{L_x^{\infty}L_y^{\alpha_1}}(\|u\|_{H^1}^{2p}+\|v\|_{H^1}^{2p})\\
&\leq  \delta
\end{split}
\end{equation}
by choosing $R$ large enough. For such choice of $R$, by using Lemma
\ref{lem 4.2}, the H\"{o}lder inequality, the Sobolev imbedding
theorem, (\ref{e3.12}) and  the Young inequality
\begin{equation}\label{e2.3}
\tilde{a}\tilde{b}\lesssim\epsilon\tilde{a}^{r}+\epsilon^{-q/r}\tilde{b}^{q},\
\frac{1}{r}+\frac{1}{q}=1,
\end{equation}
$\epsilon>0$ is an arbitrary real number, we obtain that
\begin{equation}\label{e3.13}
\begin{split}
I&\leq
\|k_R\|_{L_x^{\infty}L_y^{\infty}}\||u|^p+|v|^p\|_{L^1}\||u|^p-|v|^p\|_{L^1}\\
&\lesssim R\||u|^p+|v|^p\|_{L^1}\|(|u|^{p-1}+|v|^{p-1})|u-v|\|_{L^1}\\
&\lesssim R(\|u\|_{L^p}^p+\|v\|_{L^p}^p)(\|u\|_{L^p}^{p-1}+\|v\|_{L^p}^{p-1})\|u-v\|_{L^p}\\
&\lesssim
R M^{2p-1}\|u-v\|_{L^2}^{1-\theta}\|u-v\|_{L^{2^*}}^{\theta}\\
&\lesssim \epsilon
\|u-v\|_{L^{2^*}}+\epsilon^{-\theta/(1-\theta)}C(M)\|u-v\|_{L^2}\\
&\leq \delta+ C_\delta(M)\|u-v\|_{L^2},
\end{split}
\end{equation}
by choosing $\epsilon>0$ small enough, where $\theta$ is defined by
\begin{equation*}
\frac{1}{p}=\frac{1-\theta}{2}+\frac{\theta}{2^*}.
\end{equation*}
Inserting (\ref{e3.14}) and (\ref{e3.13}) into (\ref{e3.15}), we
complete the proof of (G3).

We verify (G5). Let $I\subset \mathbb{R}$ be a bounded open
interval, $\{w_n\}_{n=1}^{\infty}$ be any bounded sequence in
$L^{\infty}(I;H^1(\mathbb{R}^N))$ satisfying
\begin{equation}\label{e*1}
\begin{cases}
w_n(t)\to w(t)\ \mathrm{weakly\ in}\ H^1(\mathbb{R}^N) \ \mathrm{as
}\ n\to \infty\
\mathrm{for\ almost\ all}\ t\in I,\\
g(w_n(t))\to f(t) \ \ \mathrm{weakly *\ in}\
L^{\infty}(I;H^{-1}(\mathbb{R}^N))\ \mathrm{as}\ n\to \infty.
\end{cases}
\end{equation}
We will show that $f(t)=g(w(t))$, and thus (G5) follows in view of
(G4).

It follows from (\ref{e*1}) that $\{g(w_n)\}$ is bounded in
$L^{\infty}(I,H^{-1}(\mathbb{R}^N))$. Hence, there exists $C>0$ such
that
\begin{equation}\label{e*4}
\|g(w_n(t))\|_{H^{-1}}\leq C\ \mathrm{for\ almost\ all\ }  t\in I\
\mathrm{and\ any}\  n\in \mathbb{N}.
\end{equation}
Consequently, for almost all  $t\in I$, there exists a subsequence
$g(w_{n_j(t)}(t))$ and $\tilde{f}(t)\in H^{-1}(\mathbb{R}^N)$ such
that
\begin{equation}\label{e*2}
g(w_{n_j(t)}(t)) \to \tilde{f}(t) \ \mathrm{weakly\  in}\
H^{-1}(\mathbb{R}^N)\ \mathrm{as}\ j\to\infty.
\end{equation}
By (\ref{e*1}), (\ref{e*2}), Lemma \ref{lem3.2} and the uniqueness
of limit, we have $\tilde{f}(t)=g(w(t))$. Hence,
\begin{equation}\label{e*3}
g(w_n(t))\to g(w(t)) \ \mathrm{weakly\ in\ } H^{-1}(\mathbb{R}^N)\
\mathrm{for\ almost\ all} \ t\in I.
\end{equation}

For any $\varphi(t)\in L^{1}(I,H^{1}(\mathbb{R}^N))$, we have
$\varphi(t)\in H^{1}(\mathbb{R}^N)$ for almost all $t\in I$. Hence,
by (\ref{e*3}),
\begin{equation*}
\langle g(w_n(t)),\varphi(t)\rangle_{H^{-1},H^{1}}\to \langle
g(w(t)),\varphi(t)\rangle_{H^{-1},H^{1}}\ \mathrm{for\ almost\ all}
\ t\in I.
\end{equation*}
By (\ref{e*4}), we have
\begin{equation*}
\begin{split}
\left|\langle g(w_n(t)),\varphi(t)\rangle_{H^{-1},H^{1}}\right|&\leq
\|g(w_n(t))\|_{H^{-1}}\|\varphi(t)\|_{H^1}\\
&\leq C\|\varphi(t)\|_{H^1}\in L^1(\mathbb{R}^N).
\end{split}
\end{equation*}
Thus, the Lebesgue's dominated convergence theorem implies that
\begin{equation*}
\int_{I}\langle g(w_n(t)),\varphi(t)\rangle_{H^{-1},H^{1}}dt\to
\int_{I}\langle g(w(t)),\varphi(t)\rangle_{H^{-1},H^{1}}dt
\end{equation*}
as $n\to \infty$. That is $g(w_n(t))\to g(w(t))$ weakly $*$ in
$L^{\infty}(I,H^{-1}(\mathbb{R}^N))$. Hence, $f(t)=g(w(t))$. The
proof of (G5) is complete.
\end{proof}

\begin{lemma}\label{lem unique}
Let $N\geq 3$, $\alpha\in ((N-4)_+,N)$, $2\leq p<(N+\alpha)/(N-2)$,
$a=\pm 1$, $g$ and $G$ be as in Lemma \ref{lem condi}. Assume that
$u_j\in L^{\infty}((-T,T),H^1(\mathbb{R}^N))\cap
W^{1,\infty}((-T,T),H^{-1}(\mathbb{R}^N))$ $(j=1,2)$ are two local
weak solutions to $\mathrm{(CH_b)}$ on $(-T,T)$ with initial values
$u_j(0)=u_{j,0}$. Then for any $(p_2,q_2)\in S$,
\begin{equation*}
\|u_1(t)-u_2(t)\|_{L^{p_2}((-T,T),L^{q_2})}\leq
C\|u_{1,0}-u_{2,0}\|_{L^2},
\end{equation*}
where $C$ is a constant depending on
$\|u_1\|_{L^{\infty}((-T,T),H^1)}$ and
$\|u_2\|_{L^{\infty}((-T,T),H^1)}$.
\end{lemma}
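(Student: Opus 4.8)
The plan is to set $w:=u_1-u_2$ and exploit the Duhamel representation together with the Strichartz estimates of Proposition \ref{pro 2.2}. Since each $u_j$ solves $\mathrm{(CH_b)}$ in the form $i\partial_t u_j=\mathcal{L}_b u_j+g(u_j)$, with $g$ as in (\ref{e3.16}), the difference obeys, on any subinterval $J\ni t_0$,
\[
w(t)=e^{-i(t-t_0)\mathcal{L}_b}w(t_0)-i\int_{t_0}^t e^{-i(t-s)\mathcal{L}_b}\big(g(u_1(s))-g(u_2(s))\big)\,ds .
\]
Applying (\ref{e2.2}) to the free term and (\ref{e2.4}) to the Duhamel term, for the prescribed admissible output pair $(p_2,q_2)$ and a conveniently chosen admissible pair $(a,b)$, I obtain
\[
\|w\|_{L^{p_2}(J,L^{q_2})}\lesssim \|w(t_0)\|_{L^2}+\big\|g(u_1)-g(u_2)\big\|_{L^{a'}(J,L^{b'})}.
\]

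The heart of the matter is the nonlinear difference estimate. First I would decompose, exactly as in the verification of (G2) in Lemma \ref{lem condi},
\[
g(u_1)-g(u_2)=-a\,(I_\alpha\ast|u_1|^p)\big(|u_1|^{p-2}u_1-|u_2|^{p-2}u_2\big)-a\,\big(I_\alpha\ast(|u_1|^p-|u_2|^p)\big)|u_2|^{p-2}u_2,
\]
and bound the two elementary differences pointwise by $(|u_1|^{p-2}+|u_2|^{p-2})|w|$ and $(|u_1|+|u_2|)^{p-1}|w|$ via (\ref{e3.12}) and its companion inequality. Applying Hardy--Littlewood--Sobolev (Lemma \ref{lem HLS}, Remark \ref{rek1.31}) to the convolution factors and H\"{o}lder in space, the $2p-2$ heavy copies of $u_1,u_2$ are placed in Lebesgue norms controlled by $\|u_j\|_{H^1}$ through the Sobolev embedding $H^1\hookrightarrow L^r$, $2\le r\le 2^*$, while the single factor $w$ is kept in the admissible norm $L^{q_2}_x$. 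This yields, at fixed time,
\[
\big\|g(u_1)-g(u_2)\big\|_{L^{b'}_x}\lesssim \big(\|u_1\|_{H^1}+\|u_2\|_{H^1}\big)^{2p-2}\|w\|_{L^{q_2}_x},
\]
and then H\"{o}lder in time over $J$ produces a gain $|J|^{\theta}$ with some $\theta>0$.

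Combining the two stages on an interval $J$ of length $\tau$ gives
\[
\|w\|_{L^{p_2}(J,L^{q_2})}\le C\|w(t_0)\|_{L^2}+C(M)\,\tau^{\theta}\,\|w\|_{L^{p_2}(J,L^{q_2})},
\]
where $M:=\|u_1\|_{L^\infty H^1}\vee\|u_2\|_{L^\infty H^1}$. Choosing $\tau=\tau(M)$ small enough to absorb the last term, I recover $\|w\|_{L^{p_2}(J,L^{q_2})}\le 2C\|w(t_0)\|_{L^2}$; taking in particular the admissible pair $(\infty,2)$ propagates the $L^2$ bound from $t_0$ to the right endpoint of $J$. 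Since $\tau$ depends only on $M$, finitely many such steps cover $(-T,T)$, and concatenating them (the $L^2$ norm at the end of one step serving as data for the next) yields the claimed estimate for every admissible $(p_2,q_2)$, with a constant depending on $M$ and on the number of steps, hence on $T$.

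The main obstacle is the nonlinear step: arranging the exponents in the Hardy--Littlewood--Sobolev and H\"{o}lder inequalities so that all $2p-2$ factors of $u_1,u_2$ sit in $H^1$-admissible Lebesgue spaces while $w$ stays linear at the $L^2$ Strichartz level and a strictly positive power of $\tau$ survives. This is precisely where the restrictions $2\le p<(N+\alpha)/(N-2)$ and $\alpha>(N-4)_+$ enter, since they guarantee that a valid admissible pair $(b,b')$ exists and that the relevant Sobolev embeddings are available; the nonlocal convolution makes this bookkeeping strictly more delicate than in the local setting of \cite{Dinh 2017}.
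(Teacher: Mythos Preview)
Your Duhamel--Strichartz--iteration scheme is exactly the paper's strategy, and the decomposition of $g(u_1)-g(u_2)$ is the same. The one organizational difference is how the nonlinear estimate is closed. You aim to place $w$ directly in $L^{q_2}_x$ for the \emph{given} output pair and distribute the $2p-2$ heavy factors via Sobolev; as you say, this forces a case-by-case exponent hunt depending on $(p_2,q_2)$, and you leave that hunt unresolved. The paper sidesteps this entirely by introducing a single \emph{pivot} admissible pair
\[
(p_1,q_1)=\Bigl(\tfrac{4p}{Np-(N+\alpha)},\ \tfrac{2Np}{N+\alpha}\Bigr),
\]
the natural HLS exponent, and proving the clean pointwise-in-time estimate
\[
\|g(u_1)-g(u_2)\|_{L^{q_1'}}\lesssim \bigl(\|u_1\|_{L^{q_1}}\vee\|u_2\|_{L^{q_1}}\bigr)^{2p-2}\|w\|_{L^{q_1}},
\]
where \emph{all} $2p-1$ factors sit in the same space $L^{q_1}\supset H^1$. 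H\"older in time then produces the explicit gain $T_0^{[(2-N)p+N+\alpha]/(2p)}>0$ (this is exactly where $p<(N+\alpha)/(N-2)$ is used, since it is equivalent to $p_1>2$). One first closes the contraction in $(p_1,q_1)$, and only afterwards feeds the resulting bound back into the Strichartz estimate to obtain the conclusion for arbitrary $(p_2,q_2)$. This two-step ``pivot then upgrade'' device is precisely what dissolves the obstacle you flag in your last paragraph; you may want to adopt it rather than attempt the direct closure in $(p_2,q_2)$.
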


\begin{proof}
It follows from the integral expressions of $u_j\ (j=1,2)$
\begin{equation*}
u_j(t)=e^{-it\mathcal{L}_b}u_{j,0}-i\int_{0}^{t}e^{-i(t-s)\mathcal{L}_b}g(u_j(s))ds
\end{equation*}
that $v(t):=u_1(t)-u_2(t)$  satisfies
\begin{equation}\label{e4.7}
v(t)=e^{-it\mathcal{L}_b}(u_{1,0}-u_{2,0})-i\int_{0}^{t}e^{-i(t-s)\mathcal{L}_b}(g(u_1(s))-g(u_2(s)))ds.
\end{equation}

Let $\gamma=\frac{2Np}{N+\alpha}$. By the Hardy-Littlewood-Sobolev
inequality, for any $u,\ v,\ \varphi\in H^1(\mathbb{R}^N)$,
\begin{equation*}
\begin{split}
\langle g(u)-g(v),\varphi\rangle\leq
\left(\|u\|_{L^{\frac{2Np}{N+\alpha}}}\vee\|v\|_{L^{\frac{2Np}{N+\alpha}}}\right)^{2p-2}\|u-v\|_{L^{\frac{2Np}{N+\alpha}}}
\|\varphi\|_{L^{\frac{2Np}{N+\alpha}}},
\end{split}
\end{equation*}
which implies that
\begin{equation}\label{e4.6}
\|g(u)-g(v)\|_{L^{\gamma'}}\leq
\left(\|u\|_{L^{\frac{2Np}{N+\alpha}}}\vee\|v\|_{L^{\frac{2Np}{N+\alpha}}}\right)^{2p-2}\|u-v\|_{L^{\frac{2Np}{N+\alpha}}}.
\end{equation}

Set
\begin{equation*}
(p_1,q_1)=\left(\frac{4p}{Np-(N+\alpha)},
\frac{2Np}{N+\alpha}\right)\in S.
\end{equation*}
For any $(p_2,q_2)\in S$ and $T_0\in (0,T]$, it follows from
Proposition \ref{pro 2.2}, the H\"{o}lder inequality and
(\ref{e4.6}) that
\begin{equation}\label{e4.8}
\begin{split}
&\left\|\int_{0}^{t}e^{-i(t-s)\mathcal{L}_b}(g(u_1(s))-g(u_2(s)))ds\right\|_{L^{p_2}((-T_0,T_0),L^{q_2})}\\
\lesssim & \|g(u_1(t))-g(u_2(t))\|_{L^{p_1'}((-T_0,T_0),L^{q_1'})}\\
\lesssim &
\left(\int_{-T_0}^{T_0}\left(\|u_1\|_{L^{q_1}}\vee\|u_2\|_{L^{q_1}}\right)^{(2p-2)p_1'}\|v\|_{L^{q_1}}^{p_1'}dt\right)^{1/p_1'}\\
\lesssim& \left(\int_{-T_0}^{T_0}
\left(\|u_1\|_{L^{q_1}}\vee\|u_2\|_{L^{q_1}}\right)
^{\frac{4p(p-1)}{(2-N)p+(N+\alpha)}}dt\right)^{\frac{(2-N)p+(N+\alpha)}{2p}}\\
&\qquad\qquad\qquad\times\|v\|_{L^{p_1}((-T_0,T_0),L^{q_1})}\\
\lesssim
&\left(\|u_1\|_{L^{\infty}((-T_0,T_0),L^{q_1})}+\|u_2\|_{L^{\infty}((-T_0,T_0),L^{q_1})}\right)^{2(p-1)}\\
&\qquad\qquad \qquad\times
T_0^{\frac{(2-N)p+(N+\alpha)}{2p}}\|v\|_{L^{p_1}((-T_0,T_0),L^{q_1})}.
\end{split}
\end{equation}

By (\ref{e4.7}), (\ref{e4.8}) and Proposition \ref{pro 2.2}, we
obtain that
\begin{equation}\label{e4.5}
\begin{split}
\|v\|_{L^{p_2}((-T_0,T_0),L^{q_2})}&\leq
C\|u_{1,0}-u_{2,0}\|_{L^2}\\
&\qquad
+CM^{2(p-1)}T_0^{\frac{(2-N)p+(N+\alpha)}{2p}}\|v\|_{L^{p_1}((-T_0,T_0),L^{q_1})},
\end{split}
\end{equation}
where
$M=\|u_1\|_{L^{\infty}((-T,T),H^1)}\vee\|u_2\|_{L^{\infty}((-T,T),H^1)}$.

Since $p<\frac{N+\alpha}{N-2}$, by choosing $(p_2,q_2)=(p_1,q_1)$
and $T_0\in (0,T]$ such that
$$CM^{2(p-1)}T_0^{\frac{(2-N)p+(N+\alpha)}{2p}}<1/2,$$
 we obtain from (\ref{e4.5}) that
\begin{equation*}
\|v\|_{L^{p_1}((-T_0,T_0),L^{q_1})}\leq 2C\|u_{1,0}-u_{2,0}\|_{L^2},
\end{equation*}
and from (\ref{e4.5}) again that
\begin{equation*}
\|v\|_{L^{p_2}((-T_0,T_0),L^{q_2})}\leq 2C\|u_{1,0}-u_{2,0}\|_{L^2}
\end{equation*}
for any $(p_2,q_2)\in S$.

Extending the interval for finite steps, similarly to the above
arguments, we obtain that
\begin{equation*}
\|v\|_{L^{p_1}((-T,T),L^{q_1})}\leq C\|u_{1,0}-u_{2,0}\|_{L^2}
\end{equation*}
and
\begin{equation*}
\|v\|_{L^{p_2}((-T,T),L^{q_2})}\leq C\|u_{1,0}-u_{2,0}\|_{L^2}
\end{equation*}
for any $(p_2,q_2)\in S$. The proof is complete.
\end{proof}

\textbf{Proof of Theorem \ref{thm local}}.  It is a direct result of
Theorem \ref{thm4.1}, Lemmas \ref{lem condi} and  \ref{lem unique}.

\section{Sharp Gagliardo-Nirenberg inequality and global existence}

\setcounter{section}{4} \setcounter{equation}{0}

In this section, we derive the sharp Gagliardo-Nirenberg inequality
and some useful identities  with respect to problem
($\mathrm{CH_b}$). Based of which and the local well-posedness
result established in Section 3, we prove the global existence
results in various cases.

\textbf{4.1.  Sharp Gagliardo-Nirenberg inequality}.  We consider
the sharp Gagliardo-Nirenberg inequality
\begin{equation}\label{e3.1}
\int_{\mathbb{R}^N}(I_\alpha\ast |u|^p)|u|^pdx\leq
C_{GN}(b)\|u\|_{L^2}^{N+\alpha-Np+2p}\|u\|_{\dot{H}_b^1}^{Np-N-\alpha},
\end{equation}
where the sharp constant $C_{GN}(b)$ is defined by
\begin{equation}\label{e3.2}
C^{-1}_{GN}(b)=\inf\{J_b(u):u\in H_b^1(\mathbb{R}^N)\setminus\{0\}\}
\end{equation}
and
\begin{equation}\label{e3.3}
J_b(u)=\frac{\|u\|_{L^2}^{N+\alpha-Np+2p}\|u\|_{\dot{H}_b^1}^{Np-N-\alpha}}{\int_{\mathbb{R}^N}(I_\alpha\ast
|u|^p)|u|^pdx}.
\end{equation}

We also consider the sharp radial Gagliardo-Nirenberg inequality
\begin{equation}\label{e3.4}
\int_{\mathbb{R}^N}(I_\alpha\ast |u|^p)|u|^pdx\leq
C_{GN}(b,\mathrm{rad})\|u\|_{L^2}^{N+\alpha-Np+2p}\|u\|_{\dot{H}_b^1}^{Np-N-\alpha},\
u\ \mathrm{is\ radial},
\end{equation}
where the sharp constant $C_{GN}(b, \mathrm{rad})$ is defined by
\begin{equation}
C^{-1}_{GN}(b,\mathrm{rad})=\inf\{J_b(u):u\in
H_b^1(\mathbb{R}^N)\setminus\{0\},\ u \ \mathrm{is\ radial}\}.
\end{equation}

By combining the proofs of Theorem 4.1 in \cite{Dinh 2017} and
Theorem 2.3 in \cite{Feng-Yuan 2015}, we obtain the following
result. For the completeness, we give the proof here.

\begin{lemma}\label{lem3.1}
Let $N\geq 3$, $\alpha\in (0,N)$, $(N+\alpha)/N<p<(N+\alpha)/(N-2)$,
$b>-(N-2)^2/4$. Then $C_{GN}(b)\in (0,\infty)$ and

(1) If $-(N-2)^2/4<b\leq 0$, then the equality in (\ref{e3.1}) is
attained by a nontrivial  radial  ground state $Q_b\in
H_b^1(\mathbb{R}^N)$ to equation (\ref{e3.5}). Moreover,
\begin{equation}\label{e3.6}
C_{GN}(b)=\frac{2p}{N+\alpha-(N-2)p}\left(\frac{N+\alpha-(N-2)p}{Np-N-\alpha}\right)^{\frac{Np-N-\alpha}{2}}\|Q_b\|_{L^2}^{2-2p},
\end{equation}
\begin{equation}\label{e3.7}
\int_{\mathbb{R}^N}(I_\alpha\ast|Q_b|^p)|Q_b|^{p}dx=\frac{2p}{N+\alpha-(N-2)p}\|Q_b\|_{L^2}^2,
\end{equation}
and
\begin{equation}\label{e3.8}
\|Q_b\|_{\dot{H}_b^1}^2=\frac{Np-N-\alpha}{N+\alpha-(N-2)p}\|Q_b\|_{L^2}^2.
\end{equation}

(2) If $b>0$, then $C_{GN}(b)=C_{GN}(0)$ and the equality in
(\ref{e3.1}) is never attained. However, $C_{GN}(b,\mathrm{rad})$ is
attained by a radial solution $Q_{b,\mathrm{rad}}$ to equation
(\ref{e3.5}). Moreover, the same identities as in
(\ref{e3.6})-(\ref{e3.8}) hold true with $Q_{b,\mathrm{rad}}$,
$C_{GN}(b,\mathrm{rad})$ in place of $Q_{b}$, $C_{GN}(b)$
respectively.
\end{lemma}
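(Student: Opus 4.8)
The plan is to treat (\ref{lem3.1}) as a Weinstein-type constrained optimization problem for the scale-invariant functional $J_b$ in (\ref{e3.3}). First I would record that $C_{GN}(b)\in(0,\infty)$. Finiteness of the sharp constant (equivalently $\inf J_b>0$, i.e.\ the inequality (\ref{e3.1})) follows by combining the Hardy--Littlewood--Sobolev inequality (Lemma \ref{lem HLS}) with Sobolev embedding and interpolation: by Lemma \ref{lem HLS} one bounds $\int(I_\alpha\ast|u|^p)|u|^p$ by $\|u\|_{L^{2Np/(N+\alpha)}}^{2p}$, and since $(N+\alpha)/N<p<(N+\alpha)/(N-2)$ forces $2<2Np/(N+\alpha)<2^*$, interpolating $L^{2Np/(N+\alpha)}$ between $L^2$ and $L^{2^*}$ and invoking $\|u\|_{\dot H_b^1}\sim\|\nabla u\|_{L^2}$ produces exactly the exponents $N+\alpha-Np+2p$ and $Np-N-\alpha$; that $\inf J_b<\infty$ is clear by testing on a single nonzero function. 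Since $J_b$ is invariant under $u\mapsto\lambda u(\mu\cdot)$, I would normalize a minimizing sequence by $\|u_n\|_{L^2}=\|u_n\|_{\dot H_b^1}=1$, so that minimizing $J_b$ is equivalent to maximizing $\int(I_\alpha\ast|u_n|^p)|u_n|^p$.

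For $-(N-2)^2/4<b\le0$ I would produce a minimizer by symmetrization together with radial compactness. Because $b\le0$, the symmetric decreasing rearrangement $u\mapsto u^*$ preserves $\|u\|_{L^2}$, does not decrease $\int(I_\alpha\ast|u|^p)|u|^p$ (Riesz rearrangement), does not increase $\|\nabla u\|_{L^2}$ (P\'olya--Szeg\H{o}), and does not increase $b\int|x|^{-2}|u|^2$ (as $|x|^{-2}$ is symmetric decreasing and $b\le0$); hence $J_b(u^*)\le J_b(u)$ and the minimizing sequence may be taken radial. The compact embedding $H_{\mathrm{rad}}^1\hookrightarrow L^{2Np/(N+\alpha)}$ (valid since the exponent is strictly subcritical) yields a nonzero strong limit $w$ there, so $\int(I_\alpha\ast|w|^p)|w|^p=C_{GN}(b)>0$; weak lower semicontinuity of the two norms then forces $J_b(w)\le C_{GN}(b)^{-1}=\inf J_b$, whence $w$ is a minimizer and the convergence is in fact strong. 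Writing the Euler--Lagrange equation of the scale-invariant $J_b$ gives $c_1\mathcal L_b w+c_2 w=c_3(I_\alpha\ast|w|^p)|w|^{p-2}w$ with positive constants $c_i$, and a rescaling $Q_b(x)=\lambda w(\mu x)$ normalizes the coefficients to the ground state solving (\ref{e3.5}).

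The identities (\ref{e3.6})--(\ref{e3.8}) I would extract from Nehari and Pohozaev. Pairing (\ref{e3.5}) with $Q_b$ gives the Nehari relation $\|Q_b\|_{\dot H_b^1}^2+\|Q_b\|_{L^2}^2=\int(I_\alpha\ast|Q_b|^p)|Q_b|^p$. Differentiating $\lambda\mapsto\mathcal S(Q_b(\lambda\cdot))$ at $\lambda=1$, where $\mathcal S$ is the action whose critical point is $Q_b$, and using the homogeneities $\|Q_b(\lambda\cdot)\|_{\dot H_b^1}^2=\lambda^{2-N}\|Q_b\|_{\dot H_b^1}^2$, $\|Q_b(\lambda\cdot)\|_{L^2}^2=\lambda^{-N}\|Q_b\|_{L^2}^2$ and $\int(I_\alpha\ast|Q_b(\lambda\cdot)|^p)|Q_b(\lambda\cdot)|^p=\lambda^{-N-\alpha}\int(I_\alpha\ast|Q_b|^p)|Q_b|^p$, yields
\[
(2-N)\|Q_b\|_{\dot H_b^1}^2-N\|Q_b\|_{L^2}^2+\frac{N+\alpha}{p}\int_{\mathbb{R}^N}(I_\alpha\ast|Q_b|^p)|Q_b|^p\,dx=0.
\]
Solving this $2\times2$ linear system in the three quadratic quantities gives (\ref{e3.7}) and (\ref{e3.8}), and substituting back into $J_b(Q_b)=C_{GN}(b)^{-1}$ produces (\ref{e3.6}). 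I expect the genuine obstacle to be the compactness step of the previous paragraph: ruling out vanishing and dichotomy of the minimizing sequence is exactly where the sign condition $b\le0$ enters (through rearrangement), which is why the argument must be rerouted for $b>0$.

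For $b>0$, rearrangement increases the potential term, so instead I would argue $C_{GN}(b)=C_{GN}(0)$ directly. From $\|u\|_{\dot H_b^1}\ge\|u\|_{\dot H_0^1}$ and $Np-N-\alpha>0$ we get $J_b\ge J_0$, hence $C_{GN}(b)\le C_{GN}(0)$; conversely, translating a near-optimizer of $J_0$ by $y_n$ with $|y_n|\to\infty$ leaves $\|u\|_{L^2}$, $\|\nabla u\|_{L^2}$ and $\int(I_\alpha\ast|u|^p)|u|^p$ unchanged while $b\int|x+y_n|^{-2}|u|^2\to0$, so $J_b(u(\cdot-y_n))\to J_0(u)$ and thus $C_{GN}(b)\ge C_{GN}(0)$. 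Non-attainment follows since any minimizer $u\ne0$ would force $\|u\|_{\dot H_b^1}=\|u\|_{\dot H_0^1}$, i.e.\ $b\int|x|^{-2}|u|^2=0$, impossible for $b>0$. Finally, restricting to radial functions removes translations but retains the compact radial embedding, so the radial problem is attained by $Q_{b,\mathrm{rad}}$; since the Nehari and Pohozaev derivation above never used the sign of $b$, the analogues of (\ref{e3.6})--(\ref{e3.8}) hold verbatim with $Q_{b,\mathrm{rad}}$ and $C_{GN}(b,\mathrm{rad})$ in place of $Q_b$ and $C_{GN}(b)$.
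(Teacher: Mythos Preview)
Your proposal is correct and follows essentially the same route as the paper: the paper likewise uses HLS plus interpolation for $C_{GN}(b)\in(0,\infty)$, Schwartz symmetrization (exploiting $b\le 0$) plus scaling normalization and radial compactness to produce a minimizer, the Euler--Lagrange equation rescaled to (\ref{e3.5}), and for $b>0$ the strict inequality $\|u\|_{\dot H^1}<\|u\|_{\dot H_b^1}$ together with translating $Q_0$ to infinity to obtain $C_{GN}(b)=C_{GN}(0)$ and non-attainment. The only cosmetic difference is that the paper derives the Pohozaev identity by multiplying (\ref{e3.5}) by $x\cdot\nabla Q_b$ and integrating, whereas you obtain it by differentiating the action along dilations; these are equivalent.
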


\begin{proof}
The case $b=0$ is already considered in \cite{Feng-Yuan 2015}, so we
assume that $b\neq0$. By using the Hardy-Littlewood-Sobolev
inequality, the interpolation inequality, the Sobolev imbedding
theorem and the equivalence of $\|\cdot\|_{\dot{H}_b^1}$ and
$\|\cdot\|_{\dot{H}^1}$,  for any $u\in H_b^1(\mathbb{R}^N)$, we
obtain that
\begin{equation*}
\begin{split}
\int_{\mathbb{R}^N}(I_\alpha\ast|u|^p)|u|^{p}dx&\leq
C\|u\|_{L^{\frac{2Np}{N+\alpha}}}^{2p}\\
&\leq C\|u\|_{\dot{H}^1}^{Np-N-\alpha}\|u\|_{L^2}^{N+\alpha-Np+2p}\\
&\leq C
\|u\|_{\dot{H}_b^1}^{Np-N-\alpha}\|u\|_{L^2}^{N+\alpha-Np+2p},
\end{split}
\end{equation*}
which implies that $0<C^{-1}_{GN}(b)<\infty$.

\textbf{Case 1} ($-\frac{(N-2)^2}{4}<b<0$). Let
$\{u_n\}_{n=1}^{\infty}\subset H_b^1(\mathbb{R}^N)\setminus\{0\}$ be
a minimizing sequence of $J_b$, that is, $\lim_{n\to
\infty}J_b(u_n)=C^{-1}_{GN}(b)$. Denote by $u_n^*$ the Schwartz
symmetrization of $u_n$. From Chapter 3 in \cite{Lieb-Loss 2001}, we
have
\begin{equation*}
\int_{\mathbb{R}^N}(I_\alpha\ast|u_n|^p)|u_n|^{p}dx\leq
\int_{\mathbb{R}^N}(I_\alpha\ast|u_n^*|^p)|u_n^*|^{p}dx,
\end{equation*}
\begin{equation*}
\int_{\mathbb{R}^N}|u_n|^{2}dx= \int_{\mathbb{R}^N}|u_n^*|^{2}dx,
\end{equation*}
\begin{equation*}
\int_{\mathbb{R}^N}|\nabla u_n|^{2}dx\geq \int_{\mathbb{R}^N}|\nabla
u_n^*|^{2}dx,
\end{equation*}
and
\begin{equation*}
\int_{\mathbb{R}^N}|x|^{-2}|u_n|^{2}dx\leq
\int_{\mathbb{R}^N}|x|^{-2}|u_n^*|^{2}dx.
\end{equation*}
The above inequalities and $b<0$ imply that $J_b(u_n^*)\leq
J_b(u_n)$. Hence, we may assume that $\{u_n\}_{n=1}^{\infty}$ is
radial.

Direct calculation shows that $J_b$ is invariant under the scaling
\begin{equation*} u_{\lambda,\mu}(x)=\lambda u(\mu x),\ \lambda,\
\mu>0,
\end{equation*}
that is, $J_b(u_{\lambda,\mu})=J_b(u)$. For the sake, we set
$v_n=\lambda_nu_n(\mu_nx)$ with
\begin{equation*}
\lambda_n=\frac{\|u_n\|_{L^2}^{N/2-1}}{\|u_n\|_{\dot{H}_b^1}^{N/2}}\
\ \mathrm{and}\ \  \mu_n=
\frac{\|u_n\|_{L^2}}{\|u_n\|_{\dot{H}_b^1}}.
\end{equation*}
Then $\|v_n\|_{L^2}=\|v_n\|_{\dot{H}_b^1}=1$. That is,
$\{v_n\}_{n=1}^{\infty}\subset H_b^1(\mathbb{R}^N)\setminus\{0\}$ is
a bounded radial symmetric minimizing sequence of $J_b$. Hence,
there exists $v\in H_b^1(\mathbb{R}^N)$ such that
\begin{equation*}
v_n\to v\ \ \mathrm{weakly\ in\ }H_b^1(\mathbb{R}^N),
\end{equation*}
\begin{equation*}
v_n\to v\ \ \mathrm{strongly\ in\ }L^r(\mathbb{R}^N)\ \mathrm{with}\
r\in\left(2,2^*\right),
\end{equation*}
which combine with the Hardy-Littlewood-Sobolev inequality and the
lower semi-continuity of the norm imply that
\begin{equation*}
\|v\|_{L^2}\leq \liminf_{n\to \infty}\|v_n\|_{L^2},\quad
\|v\|_{\dot{H}_b^1}\leq \liminf_{n\to \infty}\|v_n\|_{\dot{H}_b^1},
\end{equation*}
\begin{equation*}
\int_{\mathbb{R}^N}(I_\alpha\ast|v|^p)|v|^{p}dx=\lim_{n\to
\infty}\int_{\mathbb{R}^N}(I_\alpha\ast|v_n|^p)|v_n|^{p}dx.
\end{equation*}
Hence, $$J_b(v)\leq \liminf_{n\to\infty}J_b(v_n)=C^{-1}_{GN}(b).$$
By the definition of $C^{-1}_{GN}(b)$, we obtain that
\begin{equation*}
J_b(v)=\lim_{n\to\infty}J_b(v_n)=\inf_{u\in
H_b^1(\mathbb{R}^N)\setminus\{0\}} J_b(u)=C^{-1}_{GN}(b)
\end{equation*}
and $\|v\|_{L^2}=\|v\|_{\dot{H}_b^1}=1$. In particular, $v$
satisfies
\begin{equation*}
\frac{d}{d\epsilon}J_b(v+\epsilon\varphi)\mid_{\epsilon=0}=0\
\mathrm{for\ any\ } \varphi\in H_b^1(\mathbb{R}^N).
\end{equation*}
Consequently, $v$ satisfies the elliptic equation
\begin{equation*}
\frac{C_{GN}(b)(Np-N-\alpha)}{2p}\mathcal{L}_bv+\frac{C_{GN}(b)(N+\alpha-Np+2p)}{2p}v=(I_\alpha\ast|v|^p)|v|^{p-2}v.
\end{equation*}
Set $v(x)=\lambda Q_b(\mu x)$ with
\begin{equation*}
\lambda=\left(\frac{(N+\alpha-Np+2p)^{\alpha/2+1}C_{GN}(b)}{2p(Np-N-\alpha)^{\alpha/2}}\right)^{1/(2p-2)}
\end{equation*}
and
\begin{equation*}
\mu=\left(\frac{N+\alpha-Np+2p}{Np-N-\alpha}\right)^{1/2},
\end{equation*}
then $Q_b(x)$ satisfies (\ref{e3.5}).

Multiplying (\ref{e3.5}) by $Q_b$ and $x\cdot \nabla Q_b$ and
integrating on $\mathbb{R}^N$, respectively, we obtain that
\begin{equation*}
\|Q_b\|_{\dot{H}_b^1}^2+\|Q_b\|_{L^2}^2=\int_{\mathbb{R}^N}(I_\alpha\ast|Q_b|^p)|Q_b|^{p}dx
\end{equation*}
and
\begin{equation*}
(N-2)\|Q_b\|_{\dot{H}_b^1}^2+N\|Q_b\|_{L^2}^2=\frac{N+\alpha}{p}\int_{\mathbb{R}^N}(I_\alpha\ast|Q_b|^p)|Q_b|^{p}dx,
\end{equation*}
which  implies (\ref{e3.7}) and (\ref{e3.8}). Since
$C^{-1}_{GN}(b)=J_b(Q_b)$, we obtain (\ref{e3.6}). The functional of
(\ref{e3.5}) is defined by
\begin{equation*}
\begin{split}
\hat{E}(Q)&=\frac{1}{2}\|Q\|_{\dot{H}_b^1}^2+\frac{1}{2}\|Q\|_{L^2}^2-\frac{1}{2p}\int_{\mathbb{R}^N}(I_\alpha\ast|Q|^p)|Q|^pdx\\
&=\frac{p-1}{N+\alpha-(N-2)p}\|Q\|_{L^2}^2,
\end{split}
\end{equation*}
which implies that $Q_b$ is a ground state of (\ref{e3.5}).

\textbf{Case 2} ($b>0$). Choose  a sequence
$\{x_n\}_{n=1}^{\infty}\subset \mathbb{R}^N$ with $|x_n|\to \infty$.
Let $Q_0$ be a positive radial ground state to
\begin{equation*}
\mathcal{L}_0Q+Q=(I_\alpha\ast|Q|^p)|Q|^{p-2}Q.
\end{equation*}
Then by Lemma \ref{lem 2.3},
\begin{equation*}
J_b(Q_0(\cdot-x_n))\to J_0(Q_0)=C^{-1}_{GN}(0),
\end{equation*}
which implies that $C^{-1}_{GN}(b)\leq C^{-1}_{GN}(0)$.

On the other hand, since $b>0$,
$\|u\|_{\dot{H}^1}<\|u\|_{\dot{H}_b^1}$ for any $u\in
H^1(\mathbb{R}^N)\setminus\{0\}$. The sharp Gagliardo-Nirenberg
inequality for $b=0$ implies that
\begin{equation}\label{e3.9}
\begin{split}
\int_{\mathbb{R}^N}(I_\alpha\ast |u|^p)|u|^pdx&\leq
C_{GN}(0)\|u\|_{L^2}^{N+\alpha-Np+2p}\|u\|_{\dot{H}^1}^{Np-N-\alpha}\\
&<C_{GN}(0)\|u\|_{L^2}^{N+\alpha-Np+2p}\|u\|_{\dot{H}_b^1}^{Np-N-\alpha}.
\end{split}
\end{equation}
Hence, $J_b(u)>G^{-1}_{GN}(0)$ for any $u\in
H^1(\mathbb{R}^N)\setminus\{0\}$. Since $H^1(\mathbb{R}^N)$ is
equivalent to $H_b^1(\mathbb{R}^N)$, we obtain that
$C^{-1}_{GN}(b)\geq C^{-1}_{GN}(0)$. Thus, $C_{GN}(b)= C_{GN}(0)$.
The inequality (\ref{e3.9}) also implies that the equality in
(\ref{e3.1}) is never attained.

If we only consider radial functions, the result follows exactly as
 case 1. The proof is complete.
\end{proof}

\begin{remark}\label{rek4.1}
(1). When $-\frac{(N-2)^2}{4}<b\leq 0$, from Lemma \ref{lem3.1} and
the definitions of $H(b)$ and $K(b)$, we obtain that
\begin{equation}\label{e4.21}
K(b)=\left(\frac{2p}{(Np-N-\alpha)C_{GN}(b)}\right)^{1/(Np-N-\alpha-2)}
\end{equation}
and
\begin{equation}\label{e4.22}
H(b)=\frac{Np-N-\alpha-2}{2(Np-N-\alpha)}\left(\frac{2p}{(Np-N-\alpha)C_{GN}(b)}\right)^{2/(Np-N-\alpha-2)}.
\end{equation}

(2). Similarly, when $b>0$, the same identities as in (\ref{e4.21})
and (\ref{e4.22}) hold true with $K(b,\mathrm{rad})$,
$H(b,\mathrm{rad})$, $C_{GN}(b,\mathrm{rad})$ in place of $K(b)$,
$H(b)$, $C_{GN}(b)$ respectively.
\end{remark}

\textbf{4.2.   Global existence.}

\textbf{Proof of the global existence parts in Theorems \ref{th1.2},
\ref{thm1.3} and \ref{thm1.4}}.  Let  $u\in
C(I,H^1(\mathbb{R}^N))\cap C^1(I,H^{-1}(\mathbb{R}^N))$ be the weak
solution obtained in Theorem \ref{thm local} with initial value
$u_0(x)\in H^1(\mathbb{R}^N)$. In view of the conservation laws, we
just need to bound $\|u(t)\|_{\dot{H}_b^1}$ for any $t\in I$. It is
trivial for the defocusing case $a=-1$ from the conservation laws.
In the following, we consider the focusing case $a=1$.

\textbf{Case 1} ($2\leq p<1+(2+\alpha)/N$). By using the sharp
Gagliardo-Nirenberg inequality and the conservation laws, we obtain
that
\begin{equation*}
\begin{split}
\|u(t)\|_{\dot{H}_b^1}^2&=\int_{\mathbb{R}^N}(|\nabla
u(t)|^2+b|x|^{-2}|u(t)|^2)dx\\
&=2 E_b(u(t))+\frac{1}{p}\int_{
\mathbb{R}^N}(I_\alpha\ast|u|^p)|u|^pdx\\
&\leq
2E_b(u(t))+\frac{1}{p}C_{GN}(b)\|u(t)\|_{L^2}^{N+\alpha-Np+2p}\|u(t)\|_{\dot{H}_b^1}^{Np-N-\alpha}\\
&=2E_b(u_0)+\frac{1}{p}C_{GN}(b)\|u_0\|_{L^2}^{N+\alpha-Np+2p}\|u(t)\|_{\dot{H}_b^1}^{Np-N-\alpha}.
\end{split}
\end{equation*}
Since $p<1+(2+\alpha)/N$, we have $Np-N-\alpha<2$ and thus, by using
the Young inequality, $\|u(t)\|_{\dot{H}_b^1}$ is bounded for any
$t\in I$. This proves the global existence result in Theorem
\ref{th1.2}.

\textbf{Case 2} ($p=1+(2+\alpha)/N$). In this case, we have
\begin{equation*}
N+\alpha-Np+2p=2(2+\alpha)/N,\ Np-N-\alpha=2,
\end{equation*}
\begin{equation*}
2-2p=-2(2+\alpha)/N,\ C_{GN}(b)=p\|Q_b\|_{L^2}^{2-2p}.
\end{equation*}
Similarly to  case 1, we obtain that
\begin{equation*}
\begin{split}
\|u(t)\|_{\dot{H}_b^1}^2&=2 E_b(u(t))+\frac{1}{p}\int_{
\mathbb{R}^N}(I_\alpha\ast|u|^p)|u|^pdx\\
&\leq
2E_b(u(t))+\frac{1}{p}C_{GN}(b)\|u(t)\|_{L^2}^{N+\alpha-Np+2p}\|u(t)\|_{\dot{H}_b^1}^{Np-N-\alpha}\\
&=2E_b(u_0)+\left(\frac{\|u_0\|_{L^2}}{\|Q_{b\wedge
0}\|_{L^2}}\right)^{2(2+\alpha)/N}\|u(t)\|_{\dot{H}_b^1}^{2}.
\end{split}
\end{equation*}
Since $\|u_0\|_{L^2}<\|Q_{b\wedge 0}\|_{L^2}$, we obtain that
$\|u(t)\|_{\dot{H}_b^1}$  is bounded for any $t\in I$. This proves
the global existence result (i) in Theorem \ref{thm1.3}.

\textbf{Case 3} ($1+(2+\alpha)/N<p<(N+\alpha)/(N-2)$). Multiplying
both sides of $E_b(u(t))$ by $\|u(t)\|_{L^2}^{2\sigma}$ with
$\sigma=\frac{N+\alpha-Np+2p}{Np-N-\alpha-2}$ and by using the sharp
Gagliardo-Nirenberg inequality, we obtain that
\begin{equation}\label{e6.1}
\begin{split}
&E_b(u(t))\|u(t)\|_{L^2}^{2\sigma}\\
&=\left(\frac{1}{2}\|u\|_{\dot{H}_b^1}^2
-\frac{1}{2p}\int_{\mathbb{R}^N}(I_\alpha\ast|u|^p)|u|^pdx\right)\|u(t)\|_{L^2}^{2\sigma}\\
&\geq \frac{1}{2}(\|u\|_{\dot{H}_b^1}\|u(t)\|_{L^2}^{\sigma})^2
-\frac{1}{2p}C_{GN}(b)\|u(t)\|_{L^2}^{N+\alpha-Np+2p+2\sigma}\|u\|_{\dot{H}_b^1}^{Np-N-\alpha}\\
&=\frac{1}{2}(\|u\|_{\dot{H}_b^1}\|u(t)\|_{L^2}^{\sigma})^2-\frac{C_{GN}(b)}{2p}(\|u\|_{\dot{H}_b^1}\|u(t)\|_{L^2}^{\sigma})^{Np-N-\alpha}\\
&=f(\|u\|_{\dot{H}_b^1}\|u(t)\|_{L^2}^{\sigma}),
\end{split}
\end{equation}
where
\begin{equation*}
f(s):=\frac{1}{2}s^2-\frac{C_{GN}(b)}{2p} s^{Np-N-\alpha}, \ s\in
[0,\infty).
\end{equation*}

Direct calculation shows that $f$ has a unique critical point $s^*$
which corresponds to its maximum point, where
\begin{equation*}
s^*=\left(\frac{2p}{(Np-N-\alpha)C_{GN}(b)}\right)^{1/(Np-N-\alpha-2)}
\end{equation*}
and
\begin{equation*}
f(s^*)=\frac{Np-N-\alpha-2}{2(Np-N-\alpha)}\left(\frac{2p}{(Np-N-\alpha)C_{GN}(b)}\right)^{2/(Np-N-\alpha-2)}.
\end{equation*}
From Remark \ref{rek4.1}, we know  $K(b)=s^*$ and
$H(b)=f(s^*)=f(K(b))$.

By (\ref{e6.1}), the conservation laws, and the assumptions in
Theorem \ref{thm1.4}, we know
\begin{equation*}
\begin{split}
f(\|u(t)\|_{\dot{H}_b^1}\|u(t)\|_{L^2}^{\sigma})&\leq
E_b(u(t))\|u(t)\|_{L^2}^{2\sigma}\\
&=E_b(u_0)\|u_0\|_{L^2}^{2\sigma}\\
&<H(b)
\end{split}
\end{equation*}
for any $t\in I$. Since
$\|u_0\|_{\dot{H}_b^1}\|u_0\|_{L^2}^{\sigma}<K(b)$, by the
continuity argument, we have
$\|u(t)\|_{\dot{H}_b^1}\|u(t)\|_{L^2}^{\sigma}<K(b)$ for any $t\in
I$. This proves the global existence result (i) in Theorem
\ref{thm1.4}.

\section{Virial identities and blowup}

\setcounter{section}{5} \setcounter{equation}{0}

In this section, we first establish the virial identities for the
solution to $\mathrm{(CH_b)}$, and then, based of which, we prove
the blowup results in the main theorems. So throughout this section,
we assume $a=1$ in $\mathrm{(CH_b)}$.

\textbf{5.1.  Virial identities}.

For any $\delta>0$, we consider the following approximation problem
 of ($\mathrm{CH_b}$):
\begin{equation*}\mathrm{(CH_b^\delta)}\qquad
\begin{cases}
i\partial_tu+\Delta
u=\frac{b}{|x|^2+\delta}u-(I_{\alpha}\ast|u|^{p})|u|^{p-2}u:=-g_\delta(u),
\quad (t,x)\in
\mathbb{R}\times\mathbb{R}^{N},\\
u(0,x)=u_0(x),\quad x\in \mathbb{R}^{N}.
\end{cases}
\end{equation*}
When $\delta=0$, we refer $\mathrm{(CH_b^\delta)}$ to
$\mathrm{(CH_b)}$. It follows from (\ref{e1.1}) that for any
$b>-\frac{(N-2)^2}{4}$, $\delta>0$ and $\varphi\in
H^1(\mathbb{R}^N)$, the inequality
\begin{equation}\label{e5.13}
\begin{split}
\left(1-\frac{4b_-}{(N-2)^2}\right)\|\nabla
\varphi\|_{L^2}^2&\leq\|\nabla\varphi\|_{L^2}^2+\int_{\mathbb{R}^N}\frac{b|\varphi|^2}{|x|^2+\delta}dx\\
&\leq \left(1+\frac{4b_+}{(N-2)^2}\right)\|\nabla \varphi\|_{L^2}^2
\end{split}
\end{equation}
holds.  Moreover, by the Lebesgue dominated convergence theorem,
\begin{equation}\label{e5.21}
\left(-\Delta +\frac{b}{|x|^2+\delta}\right)\varphi\to \left(-\Delta
+\frac{b}{|x|^2}\right)\varphi\ \mathrm{strongly\ in\
}H^{-1}(\mathbb{R}^N)\ \mathrm{as}\ \delta\to 0.
\end{equation}
Hence, $\mathrm{(CH_b^\delta)}$ is a good approximation of
$\mathrm{(CH_b)}$.

For the Cauchy problem $\mathrm{(CH_b^\delta)}$, we have the
following facts:

\begin{proposition}\label{pro5.1}
Let $N\geq 3$, $\alpha\in((N-4)_+,N)$,
$p\in[2,\frac{N+\alpha}{N-2})$, $b>-\frac{(N-2)^2}{4}$. Then for any
$\delta>0$ and $u_0\in H^1(\mathbb{R}^N)$, there exists a unique
maximal-lifespan  solution $u\in C^1(I, H^1(\mathbb{R}^N))\cap C(I,
H^{-1}(\mathbb{R}^N))$ to $\mathrm{(CH_b^\delta)}$, and the
conservation laws hold:
\begin{equation*}
\|u(t)\|_{L^2}=\|u_0\|_{L^2},\
E_{b,\delta}(u(t))=E_{b,\delta}(u_0),\ \mathrm{for\ any\ }t\in I,
\end{equation*}
where
\begin{equation*}
\begin{split}
E_{b,\delta}(u(t))&=\frac{1}{2}\int_{\mathbb{R}^N}(|\nabla
u(t,x)|^2+\frac{b}{|x|^2+\delta}|u(t,x)|^2)dx\\
&\qquad-\frac{1}{2p}\int_{\mathbb{R}^N}(I_\alpha\ast|u(t,x)|^p)|u(t,x)|^{p}
dx.
\end{split}
\end{equation*}
See Theorem 4.3.1 in \cite{Cazenave 2003}. Moreover, the solution
depends continuously on the initial value, that is, if $u_{n0}\to
u_0$ strongly in $H^1(\mathbb{R}^N)$, then $u_n\to u$ in
$C([-T_1,T_2], H^1(\mathbb{R}^N))$  as $n\to \infty$ for any
$[-T_1,T_2]\subset I$. See Theorem 3.3.9 in \cite{Cazenave 2003}.
Moreover, if $u_0\in H^{2}(\mathbb{R}^N)$, then $u(t)\in C(I,
H^2(\mathbb{R}^N))\cap C^1(I, L^2(\mathbb{R}^N))$, see Theorem 4.8.1
in \cite{Cazenave 2003}.
\end{proposition}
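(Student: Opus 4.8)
The plan is to exploit the fact that for a \emph{fixed} $\delta>0$ the regularized potential $V_\delta(x):=b/(|x|^2+\delta)$ is bounded, with $\|V_\delta\|_{L^\infty}\le |b|/\delta$, so that $\mathrm{(CH_b^\delta)}$ is a nonlinear Schr\"odinger equation driven by the standard Laplacian $-\Delta$, a bounded linear potential perturbation, and the nonlocal Choquard nonlinearity. This places it squarely within the classical Cazenave framework, and the proof reduces to verifying that $g_\delta(u)=-V_\delta u+(I_\alpha\ast|u|^p)|u|^{p-2}u$ satisfies the hypotheses of Theorems 4.3.1, 3.3.9 and 4.8.1 in \cite{Cazenave 2003}. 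Accordingly I would split $g_\delta=g_\delta^{(1)}+g^{(2)}$ with $g_\delta^{(1)}(u)=-V_\delta u$ and $g^{(2)}(u)=(I_\alpha\ast|u|^p)|u|^{p-2}u$, and treat the two pieces separately.

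For the potential piece, since $V_\delta$ is real-valued and bounded, $u\mapsto V_\delta u$ is a bounded self-adjoint operator on $L^2(\mathbb{R}^N)$; in particular it is globally Lipschitz from $H^1$ into $L^2\hookrightarrow H^{-1}$, it is gauge invariant (contributing nothing to $\mathrm{Im}\langle g_\delta(u),u\rangle$), and it derives from the $C^1$ energy $\tfrac{1}{2}\int_{\mathbb{R}^N}V_\delta|u|^2\,dx$. For the Choquard piece, the required estimates are exactly those already established in Section 3: by the Hardy--Littlewood--Sobolev inequality (Lemma \ref{lem HLS} and Remark \ref{rek1.31}) together with the Sobolev embedding and pointwise bounds such as (\ref{e3.12}), $g^{(2)}$ maps $H^1$ boundedly into $H^{-1}$ and is locally Lipschitz there (this is precisely the computation carried out for condition (G2) in the proof of Lemma \ref{lem condi}), it is gauge invariant, and it derives from the energy $\tfrac{1}{2p}\int_{\mathbb{R}^N}(I_\alpha\ast|u|^p)|u|^p\,dx$. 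The subcriticality $2\le p<(N+\alpha)/(N-2)$, guaranteed by the assumption $\alpha\in((N-4)_+,N)$, is exactly what makes these Strichartz-compatible growth and Lipschitz estimates valid.

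With both pieces verified, Theorem 4.3.1 of \cite{Cazenave 2003} yields a unique maximal-lifespan solution together with the blowup alternative and conservation of mass and of the energy $E_{b,\delta}$; Theorem 3.3.9 delivers the continuous dependence statement $u_n\to u$ in $C([-T_1,T_2],H^1(\mathbb{R}^N))$; and the propagation of $H^2$ regularity follows from Theorem 4.8.1 once one checks that $g_\delta$ maps $H^2(\mathbb{R}^N)$ into $L^2(\mathbb{R}^N)$. For $g_\delta^{(1)}$ this is again immediate from the bounded multiplier $V_\delta$, while for $g^{(2)}$ it reduces to a further Hardy--Littlewood--Sobolev/product estimate that uses $p\ge2$ to ensure $|u|^{p-2}u$ is $C^1$ and the subcritical range to place the output in $L^2$.

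The routine parts are the potential term and the direct invocation of the cited theorems. The one place demanding genuine care is the nonlocal nonlinearity $g^{(2)}$: showing that it is locally Lipschitz $H^1\to H^{-1}$ and, for the regularity claim, a well-defined map $H^2\to L^2$ rests entirely on the Hardy--Littlewood--Sobolev inequality combined with Sobolev embedding, with the subcritical range of $p$ underpinning every bound. Since these very estimates were already carried out in the verification of (G1)--(G5) in Lemma \ref{lem condi}, the main obstacle is in effect dispatched before we reach this proposition, and the remaining work is bookkeeping to match the hypotheses of the Cazenave theorems to the decomposition $g_\delta=g_\delta^{(1)}+g^{(2)}$.
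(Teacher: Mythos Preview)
Your proposal is correct and follows exactly the paper's approach: the paper gives no separate proof at all, instead embedding the citations to Theorems 4.3.1, 3.3.9, and 4.8.1 of \cite{Cazenave 2003} directly into the statement of the proposition. Your write-up simply fleshes out why those theorems apply---namely, that for fixed $\delta>0$ the potential $b/(|x|^2+\delta)$ is bounded and the Choquard nonlinearity satisfies the requisite Lipschitz and energy-functional conditions already verified in Lemma \ref{lem condi}---which is precisely the implicit reasoning behind the paper's bare citations.
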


In the following, for any $\delta>0$, we obtain the virial identity
for the solution $u_\delta$ to $\mathrm{(CH_b^\delta)}$, and then by
letting $\delta\to 0$, we obtain the virial identity for the
solution $u$ to $\mathrm{(CH_b)}$. Let $u$ be the solution to
($\mathrm{CH_b^\delta}$) with $\delta\geq 0$ and $w(x)=|x|^2$ or
$w(x)=\varphi_R(x)$, where $\varphi_R(x)$ is defined in
(\ref{e5.61}). We define
\begin{equation*}
V_w(t):=\int_{\mathbb{R}^N}w(x)|u(t,x)|^2dx.
\end{equation*}

Then we have the following virial identity.

\begin{lemma}\label{lem5.1}
Let $N\geq 3$, $\alpha\in ((N-4)_+,N)$, $p\in
[2,\frac{N+\alpha}{N-2})$, $b>-\frac{(N-2)^2}{4}$, $u_0\in
H^1(\mathbb{R}^N)$ with $\sqrt{w(x)}u_0\in L^2(\mathbb{R}^N)$,
$\delta>0$. Assume $u(t)\in C(I,H^1(\mathbb{R}^N))\cap
C^1(I,H^{-1}(\mathbb{R}^N))$ is the maximal-lifespan solution to
$\mathrm{(CH_b^\delta)}$ with initial value $u_0$, then $
\sqrt{w(x)}u(t,x)\in C(I, L^2(\mathbb{R}^N))$. Moreover, $V_w(t)\in
C^2(I,\mathbb{R})$,
\begin{equation}\label{e5.1}
\frac{d}{dt}V_w(t)=2\mathrm{Im}\int_{\mathbb{R}^N}\bar{u}\nabla
w\cdot\nabla udx
\end{equation}
and
\begin{equation}\label{e5.2}
\begin{split}
\frac{d^2}{dt^2}V_w(t)&=4\mathrm{Re}\int_{\mathbb{R}^N}\partial_{i}u\partial_j\bar{u}\partial_{ij}
wdx-\int_{\mathbb{R}^N}|u|^2\Delta^2wdx+4b\int_{\mathbb{R}^N}\frac{x\cdot\nabla
w}{(|x|^2+\delta)^2}|u|^2dx\\
&\
-\frac{2\mathcal{A}(N-\alpha)}{p}\int_{\mathbb{R}^N}\int_{\mathbb{R}^N}\frac{(x-y)\cdot(\nabla
w(x)-\nabla w(y))|u(x)|^p|u(y)|^p}{|x-y|^{N-\alpha+2}}dxdy\\
&\
+\left(\frac{4}{p}-2\right)\int_{\mathbb{R}^N}(I_\alpha\ast|u|^p)|u|^p\Delta
wdx
\end{split}
\end{equation}
for any $t\in I$.
\end{lemma}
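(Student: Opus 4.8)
The plan is to exploit that, for a fixed $\delta>0$, the potential $V_\delta(x):=b(|x|^2+\delta)^{-1}$ is smooth and bounded together with all of its derivatives, so that the regularity theory recalled in Proposition~\ref{pro5.1} applies and the flow of $\mathrm{(CH_b^\delta)}$ preserves $H^2(\mathbb{R}^N)$. I would therefore first establish both identities for initial data $u_0\in H^2(\mathbb{R}^N)$ with $\sqrt{w}\,u_0\in L^2(\mathbb{R}^N)$, for which $u(t)\in C(I,H^2)\cap C^1(I,L^2)$ renders every differentiation under the integral sign and every integration by parts legitimate, and then extend to general $H^1$ data by approximating $u_0$ by $H^2$ functions and invoking the $H^1$ continuous dependence of Proposition~\ref{pro5.1}. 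When $w=\varphi_R$ is a bounded weight with bounded derivatives, $\sqrt{w}\,u\in L^2$ is automatic and the computations below are immediate; the only delicate case is the unbounded weight $w=|x|^2$, for which I would additionally replace $w$ by bounded smooth truncations $w_m$ that agree with $|x|^2$ on $B_m(0)$ and have globally bounded derivatives, derive from \eqref{e5.1} and Gronwall's inequality an $m$-uniform bound for $\int w_m|u(t)|^2\,dx$, and pass to the limit $m\to\infty$. This last step simultaneously yields $\sqrt{w}\,u(t)\in C(I,L^2(\mathbb{R}^N))$ and supplies the uniform weighted control needed to pass to the limit in the integral identities.

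For the first derivative, differentiating $V_w(t)=\int_{\mathbb{R}^N} w|u|^2\,dx$ in time and substituting $\partial_t u=i\Delta u-iV_\delta u+i(I_\alpha\ast|u|^p)|u|^{p-2}u$ gives $\frac{d}{dt}V_w=\int_{\mathbb{R}^N} w\,2\,\mathrm{Re}(\bar u\,\partial_t u)\,dx$. Both the potential term and the nonlocal term contribute integrands of the form $i\times(\text{real})$, hence drop out under $\mathrm{Re}$, leaving $-2\int_{\mathbb{R}^N} w\,\mathrm{Im}(\bar u\,\Delta u)\,dx$. Writing $\bar u\,\Delta u=\nabla\cdot(\bar u\,\nabla u)-|\nabla u|^2$ and integrating by parts once produces $2\,\mathrm{Im}\int_{\mathbb{R}^N} \bar u\,\nabla w\cdot\nabla u\,dx$, which is \eqref{e5.1}.

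The second derivative is obtained by differentiating \eqref{e5.1} once more and inserting the equation again. The kinetic part $-\Delta$ reproduces the classical virial computation and, after integration by parts, yields the first two terms $4\,\mathrm{Re}\int_{\mathbb{R}^N}\partial_i u\,\partial_j\bar u\,\partial_{ij}w\,dx-\int_{\mathbb{R}^N}|u|^2\Delta^2 w\,dx$. The smooth bounded potential contributes $-2\int_{\mathbb{R}^N}|u|^2\,\nabla V_\delta\cdot\nabla w\,dx$, and since $\nabla V_\delta(x)=-2b\,x\,(|x|^2+\delta)^{-2}$ this is exactly $4b\int_{\mathbb{R}^N}\frac{x\cdot\nabla w}{(|x|^2+\delta)^2}|u|^2\,dx$, the third term of \eqref{e5.2}.

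The remaining two terms come from the nonlocal nonlinearity, and this is where the main work lies. I would use $\nabla_x I_\alpha(x-y)=-\mathcal{A}(N-\alpha)\frac{x-y}{|x-y|^{N-\alpha+2}}$ together with the $x\leftrightarrow y$ symmetry of the resulting double integral: after substituting the equation into $\frac{d}{dt}\,2\,\mathrm{Im}\int_{\mathbb{R}^N}\bar u\,\nabla w\cdot\nabla u\,dx$ and integrating by parts, the derivative either falls on the Riesz kernel, which upon symmetrization assembles into the antisymmetric pairing $(x-y)\cdot(\nabla w(x)-\nabla w(y))$ and gives $-\frac{2\mathcal{A}(N-\alpha)}{p}\int_{\mathbb{R}^N}\int_{\mathbb{R}^N}\frac{(x-y)\cdot(\nabla w(x)-\nabla w(y))|u(x)|^p|u(y)|^p}{|x-y|^{N-\alpha+2}}\,dx\,dy$, or falls on $\nabla w$, collecting into $(\frac{4}{p}-2)\int_{\mathbb{R}^N}(I_\alpha\ast|u|^p)|u|^p\,\Delta w\,dx$. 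As a check, when $w=|x|^2$ these two contributions reduce to $-\frac{4(Np-N-\alpha)}{p}\int_{\mathbb{R}^N}(I_\alpha\ast|u|^p)|u|^p\,dx$, the familiar Choquard virial term. I expect the symmetrization of this nonlocal term, with the attendant bookkeeping of the factors of $p$, to be the principal obstacle, together with the passage from $H^2$ to $H^1$ for $w=|x|^2$; the $\delta$-regularization is precisely what makes the latter tractable, since it removes the $b|x|^{-2}$ singularity that would otherwise spoil the $H^2$ regularity on which the whole computation rests.
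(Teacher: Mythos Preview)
Your proposal is correct and follows essentially the same two-layer regularization as the paper: first work with $H^2$ data (available for $\mathrm{(CH_b^\delta)}$ precisely because $V_\delta$ is smooth and bounded), carry out the integrations by parts and the $x\leftrightarrow y$ symmetrization of the Riesz-kernel term, and then pass to $H^1$ initial data via the continuous dependence of Proposition~\ref{pro5.1}. The only noteworthy difference is in how the unbounded weight $w=|x|^2$ is tamed: the paper, following Cazenave's Proposition~6.5.1, inserts a multiplicative damping factor $\theta_\epsilon(x)=e^{-\epsilon w(x)}$ into the integrand defining $V_w'$, computes $h_\epsilon'(t)$ exactly, and then lets $\epsilon\to 0$ by dominated convergence; you instead truncate $w$ itself to a bounded $w_m$ and recover the weighted $L^2$ control through a Gronwall argument on $\int w_m|u|^2\,dx$. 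Both devices are standard and interchangeable here---the paper's multiplier has the slight advantage that $\theta_\epsilon$, $\nabla\theta_\epsilon$, $\nabla^2\theta_\epsilon$ are uniformly bounded in $x$ and $\epsilon$ so the limit is a single dominated-convergence step, while your truncation route is perhaps more transparent but requires you to also control the error terms coming from the region where $w_m$ differs from $|x|^2$. Your identification of the nonlocal symmetrization as the main bookkeeping burden is accurate and matches the paper's computations \eqref{e5.8}--\eqref{e5.10}.
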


\begin{proof}
We follow the proof of Proposition 6.5.1 in \cite{Cazenave 2003}. By
Lemma 6.5.2 in \cite{Cazenave 2003} and Proposition \ref{pro5.1}, we
obtain that $\sqrt{w(x)}u(t,x)\in C(I, L^2(\mathbb{R}^N))$,
$V_w(t)\in C^1(I,\mathbb{R})$ and (\ref{e5.1}) holds for all $t\in
I$. It remains to show that $V_w(t)\in C^2(I,\mathbb{R})$ and
(\ref{e5.2}) holds. The proof we give below is based on two
regularizations. Therefore, we proceed in two steps.

Step 1. The case $u_0\in H^2(\mathbb{R}^N)$.  By the $H^2$
regularity (Proposition \ref{pro5.1}), $u(t)\in C(I,
H^2(\mathbb{R}^N))\cap C^1(I, L^2(\mathbb{R}^N))$. For any
$\epsilon>0$, we define $\theta_\epsilon(x)=e^{-\epsilon w(x)}$ and
\begin{equation*}
h_\epsilon(t)=\mathrm{Im}\int_{\mathbb{R}^N}\theta_\epsilon(x)\bar{u}\nabla
w\cdot \nabla udx,\ \mathrm{for\ any\ }t\in I.
\end{equation*}
Then $h_\epsilon\in C^1(I,\mathbb{R})$ and
\begin{equation}\label{e5.3}
\begin{split}
h_\epsilon'(t)&=\mathrm{Re}\int_{\mathbb{R}^N}iu_t(2\theta_\epsilon\nabla\bar{u}\cdot\nabla
w+\theta_{\epsilon}\bar{u}\Delta
w+\bar{u}\nabla\theta_{\epsilon}\cdot \nabla w)dx\\
&=-\mathrm{Re}\int_{\mathbb{R}^N}(\Delta
u+g_\delta(u))(2\theta_\epsilon\nabla\bar{u}\cdot\nabla
w+\theta_{\epsilon}\bar{u}\Delta
w+\bar{u}\nabla\theta_{\epsilon}\cdot \nabla w)dx.
\end{split}
\end{equation}
Here, we have used the Green's formula and equation
$\mathrm{(CH_b^\delta)}$.

For $u\in H^2(\mathbb{R}^N)$, by using the Green's formula and
elementary calculation, we know
\begin{equation}\label{e5.4}
\begin{split}
\mathrm{Re}\int_{\mathbb{R}^N}\Delta u
2\theta_\epsilon\nabla\bar{u}\cdot\nabla w
dx=-\mathrm{Re}\int_{\mathbb{R}^N}&\left(2\nabla
u\cdot\nabla\theta_{\epsilon}\nabla\bar{u}\cdot\nabla w+2\theta_{\epsilon}\partial_ju\partial_i\bar{u}\partial_{ij}w\right.\\
&\quad\left.-|\nabla u|^2 \nabla\theta_{\epsilon}\cdot\nabla
w-|\nabla u|^2 \theta_{\epsilon}\Delta w\right)dx,
\end{split}
\end{equation}
\begin{equation}\label{e5.5}
\begin{split}
\mathrm{Re}\int_{\mathbb{R}^N}\Delta u\theta_{\epsilon}\bar{u}\Delta
wdx=-\mathrm{Re}\int_{\mathbb{R}^N}&\left(\nabla
u\cdot\nabla\theta_{\epsilon}\bar{u}\Delta
w+\theta_{\epsilon}|\nabla u|^2\Delta w\right.\\
&\ \left.-\frac{1}{2}|u|^2\nabla\theta_{\epsilon}\cdot\nabla(\Delta
w)-\frac{1}{2}|u|^2\theta_{\epsilon}\Delta^2w\right)dx,
\end{split}
\end{equation}
\begin{equation}\label{e5.6}
\begin{split}
\mathrm{Re}\int_{\mathbb{R}^N}&\Delta
u\bar{u}\nabla\theta_{\epsilon}\cdot
\nabla wdx\\
&=-\mathrm{Re}\int_{\mathbb{R}^N}\left(|\nabla u|^2
\nabla\theta_{\epsilon}\cdot\nabla
w+\bar{u}\partial_ju\partial_{ij}\theta_{\epsilon}\partial_iw
+\bar{u}\partial_ju\partial_{i}\theta_{\epsilon}\partial_{ij}w\right)dx,
\end{split}
\end{equation}
\begin{equation}\label{e5.7}
\begin{split}
\mathrm{Re}\int_{\mathbb{R}^N}&\frac{b}{|x|^2+\delta}u2\theta_{\epsilon}\nabla\bar{u}\cdot\nabla
wdx\\
&=\int_{\mathbb{R}^N}\frac{b}{|x|^2+\delta}\theta_{\epsilon}\nabla(|u|^2)\cdot\nabla
wdx\\
&=\int_{\mathbb{R}^N}\left(\frac{2b\theta_{\epsilon}x\cdot\nabla
w}{(|x|^2+\delta)^2}|u|^2-\frac{b\nabla \theta_{\epsilon}\cdot\nabla
w}{|x|^2+\delta}|u|^2-\frac{b\theta_{\epsilon}\Delta
w}{|x|^2+\delta}|u|^2\right)dx
\end{split}
\end{equation}
and
\begin{equation}\label{e5.8}
\begin{split}
\mathrm{Re}\int_{\mathbb{R}^N}&(I_\alpha\ast|u|^p)|u|^{p-2}u2\theta_{\epsilon}\nabla\bar{u}\cdot\nabla
wdx\\
&=\int_{\mathbb{R}^N}\theta_{\epsilon}(I_\alpha\ast|u|^p)|u|^{p-2}(u\nabla\bar{u}+\bar{u}\nabla
u)\cdot\nabla
wdx\\
&=\frac{2}{p}\int_{\mathbb{R}^N}\theta_{\epsilon}(I_\alpha\ast|u|^p)\nabla(|u|^p)\cdot\nabla
wdx\\
&=-\frac{2}{p}\int_{\mathbb{R}^N}\left[(I_\alpha\ast|u|^p)|u|^p\nabla\theta_{\epsilon}\cdot\nabla
w+(I_\alpha\ast|u|^p)|u|^p\theta_{\epsilon}\Delta
w\right.\\
&\qquad\qquad\qquad\left.+\theta_{\epsilon}|u|^p\nabla(I_\alpha\ast|u|^p)\cdot\nabla
w\right]dx,
\end{split}
\end{equation}
where
\begin{equation}\label{e5.9}
\begin{split}
&\int_{\mathbb{R}^N}\theta_{\epsilon}|u|^p\nabla(I_\alpha\ast|u|^p)\cdot\nabla
wdx\\
&=\mathcal{A}\int_{\mathbb{R}^N}\nabla\left(\int_{\mathbb{R}^N}\frac{|u(y)|^p}{|x-y|^{N-\alpha}}dy\right)\cdot
\nabla w(x)|u(x)|^p\theta_{\epsilon}(x)dx\\
&=\mathcal{A}\int_{\mathbb{R}^N}\int_{\mathbb{R}^N}
\frac{(\alpha-N)(x-y)\cdot\nabla
w(x)\theta_\epsilon(x)|u(x)|^p|u(y)|^p}{|x-y|^{N-\alpha+2}}dydx\\
&=\frac{(\alpha-N)\mathcal{A}}{2}\left(\int_{\mathbb{R}^N}\int_{\mathbb{R}^N}
\frac{(x-y)\cdot\nabla
w(x)\theta_\epsilon(x)|u(x)|^p|u(y)|^p}{|x-y|^{N-\alpha+2}}dydx\right.\\
&\qquad\qquad\qquad\qquad+\left.\int_{\mathbb{R}^N}\int_{\mathbb{R}^N}
\frac{(y-x)\cdot\nabla
w(y)\theta_\epsilon(y)|u(x)|^p|u(y)|^p}{|x-y|^{N-\alpha+2}}dydx\right)\\
&=\frac{(\alpha-N)\mathcal{A}}{2}\int_{\mathbb{R}^N}\int_{\mathbb{R}^N}
\frac{(x-y)\cdot(\theta_\epsilon(x)\nabla
w(x)-\theta_\epsilon(y)\nabla
w(y))|u(x)|^p|u(y)|^p}{|x-y|^{N-\alpha+2}}dydx.
\end{split}
\end{equation}
Inserting (\ref{e5.9}) into (\ref{e5.8}), we obtain that
\begin{equation}\label{e5.10}
\begin{split}
&\mathrm{Re}\int_{\mathbb{R}^N}(I_\alpha\ast|u|^p)|u|^{p-2}u2\theta_{\epsilon}\nabla\bar{u}\cdot\nabla
wdx\\
&=-\frac{2}{p}\int_{\mathbb{R}^N}(I_\alpha\ast|u|^p)|u|^p\left(\nabla\theta_{\epsilon}\cdot\nabla
w+\theta_{\epsilon}\Delta
w\right)dx\\
&+\frac{(N-\alpha)\mathcal{A}}{p}\int_{\mathbb{R}^N}\int_{\mathbb{R}^N}
\frac{(x-y)\cdot(\theta_\epsilon(x)\nabla
w(x)-\theta_\epsilon(y)\nabla
w(y))|u(x)|^p|u(y)|^p}{|x-y|^{N-\alpha+2}}dydx.
\end{split}
\end{equation}
Inserting (\ref{e5.4})-(\ref{e5.7}) and (\ref{e5.10}) into
(\ref{e5.3}), we obtain that
\begin{equation}\label{e5.11}
\begin{split}
h_\epsilon'(t)&=\mathrm{Re}\int_{\mathbb{R}^N}\left(2\nabla
u\cdot\nabla\theta_{\epsilon}\nabla\bar{u}\cdot\nabla
w+2\theta_{\epsilon}\partial_ju\partial_i\bar{u}\partial_{ij}w+\nabla
u\cdot\nabla\theta_{\epsilon}\bar{u}\Delta
w \right.\\
&
\qquad\qquad\qquad-\frac{1}{2}|u|^2\nabla\theta_{\epsilon}\cdot\nabla(\Delta
w)-\frac{1}{2}|u|^2\theta_{\epsilon}\Delta^2w
+\bar{u}\partial_ju\partial_{ij}\theta_{\epsilon}\partial_iw\\
&\left.\qquad\qquad\qquad+\bar{u}\partial_ju\partial_{i}\theta_{\epsilon}\partial_{ij}w+\frac{2b\theta_{\epsilon}x\cdot\nabla
w}{(|x|^2+\delta)^2}|u|^2 \right)dx\\
&\quad
+\left(\frac{2}{p}-1\right)\int_{\mathbb{R}^N}(I_\alpha\ast|u|^p)|u|^p\left(\nabla\theta_{\epsilon}\cdot\nabla
w+\theta_{\epsilon}\Delta
w\right)dx\\
&\quad
-\frac{(N-\alpha)\mathcal{A}}{p}\int_{\mathbb{R}^N}\int_{\mathbb{R}^N}
\frac{(x-y)\cdot(\theta_\epsilon(x)\nabla
w(x)-\theta_\epsilon(y)\nabla
w(y))|u(x)|^p|u(y)|^p}{|x-y|^{N-\alpha+2}}dydx.
\end{split}
\end{equation}
Note that $\theta_{\epsilon}$, $\partial_i\theta_{\epsilon}$,
$\partial_{ij}\theta_{\epsilon}$ are bounded with respect to both
$x$ and $\epsilon$, and by the mean value theorem,
\begin{equation*}
(x-y)\cdot(\theta_\epsilon(x)\nabla w(x)-\theta_\epsilon(y)\nabla
w(y))\lesssim |x-y|^2.
\end{equation*}
Furthermore, $\theta_\epsilon\to 1$, $\partial_i\theta_{\epsilon}\to
0$, $\partial_{ij}\theta_{\epsilon}\to 0$ and
\begin{equation*}
(x-y)\cdot(\theta_\epsilon(x)\nabla w(x)-\theta_\epsilon(y)\nabla
w(y))\to (x-y)\cdot(\nabla w(x)-\nabla w(y))
\end{equation*}
as $\epsilon\to 0$. On the other hand, for any $t\in I$, we have
$u(t)\in H^2(\mathbb{R}^N)$ and $\sqrt{w(x)}u(t)\in
L^2(\mathbb{R}^N)$, so by the choices of $w(x)$ and
$\theta_\epsilon(x)$ and by the Lebesgue dominated convergence
theorem, we obtain from (\ref{e5.11}) that
\begin{equation*}
\begin{split}
\lim_{\epsilon\to
0}h_\epsilon'(t)&=2\mathrm{Re}\int_{\mathbb{R}^N}\partial_{i}u\partial_j\bar{u}\partial_{ij}
udx-\frac{1}{2}\int_{\mathbb{R}^N}|u|^2\Delta^2wdx+2b\int_{\mathbb{R}^N}\frac{x\cdot\nabla
w}{(|x|^2+\delta)^2}|u|^2dx\\
&\
-\frac{\mathcal{A}(N-\alpha)}{p}\int_{\mathbb{R}^N}\int_{\mathbb{R}^N}\frac{(x-y)\cdot(\nabla
w(x)-\nabla w(y))|u(x)|^p|u(y)|^p}{|x-y|^{N-\alpha+2}}dxdy\\
&\
+\left(\frac{2}{p}-1\right)\int_{\mathbb{R}^N}(I_\alpha\ast|u|^p)|u|^p\Delta
wdx.
\end{split}
\end{equation*}
Since
\begin{equation*}
\lim_{\epsilon\to
0}2h_{\epsilon}(t)=2\mathrm{Im}\int_{\mathbb{R}^N}\bar{u}\nabla
w\cdot\nabla udx=\frac{d}{dt}V_{w}(t),
\end{equation*}
we see that $V_{w}(t)\in C^2(I,\mathbb{R})$  and  (\ref{e5.2})
holds.

Step 2. Let $\{u_{n0}\}_{n=1}^{\infty}\subset H^2(\mathbb{R}^N)$ be
such that $u_{n0}\to u_0$ strongly in $H^1(\mathbb{R}^N)$ and
$\sqrt{w(x)}u_{n0}\to \sqrt{w(x)}u_{0}$ strongly in
$L^2(\mathbb{R}^N)$ as $n\to\infty$, and let $u_{n}$ be the
corresponding solution to $\mathrm{(CH_b^\delta)}$ with initial
value $u_{n0}$. Let $\Phi(t)$ denote the right-hand side of
(\ref{e5.2}) and let $\Phi_n(t)$ denote the right-hand side of
(\ref{e5.2}) corresponding to the solution $u_{n}$. It follows from
Step 1 that
\begin{equation}\label{e5.12}
\|\sqrt{w(x)}u_n(t)\|_{L^2}^2=\|\sqrt{w(x)}u_{n0}\|_{L^2}^2+2t\mathrm{Im}\int_{\mathbb{R}^N}\bar{u}_{n0}\nabla
w\cdot\nabla u_{n0}dx+\int_{0}^t\int_{0}^{s}\Phi_n(\tau)d\tau ds.
\end{equation}
By the continuous dependence (Proposition \ref{pro5.1}) and
Corollary 6.5.3 in \cite{Cazenave 2003}, we may let $n\to \infty$ in
(\ref{e5.12}) and obtain
\begin{equation}\label{e5.12'}
\|\sqrt{w(x)}u(t)\|_{L^2}^2=\|\sqrt{w(x)}u_{0}\|_{L^2}^2+2t\mathrm{Im}\int_{\mathbb{R}^N}\bar{u}_{0}\nabla
w\cdot\nabla u_{0}dx+\int_{0}^t\int_{0}^{s}\Phi(\tau)d\tau ds,
\end{equation}
which implies (\ref{e5.2}).
\end{proof}

\begin{lemma}\label{lem6.1}
Let $N\geq 3$, $\alpha\in ((N-4)_+,N)$, $b>-\frac{(N-2)^2}{4}$ and
$b\neq0$, $2\leq p<\frac{N+\alpha}{N-2}$. If $u\in
C(I,H^1(\mathbb{R}^N))\cap C^1(I,H^{-1}(\mathbb{R}^N))$ is a
maximal-lifespan solution to $\mathrm{(CH_b)}$ with initial value
$u_0\in H^1(\mathbb{R}^N)$ and $\sqrt{w(x)}u_0\in
L^2(\mathbb{R}^N)$, then for any $t\in I$,
\begin{equation*}
\frac{d}{dt}V_w(t)=2\mathrm{Im}\int_{\mathbb{R}^N}\bar{u}\nabla
w\cdot\nabla udx
\end{equation*}
and
\begin{equation*}
\begin{split}
\frac{d^2}{dt^2}V_w(t)&=4\mathrm{Re}\int_{\mathbb{R}^N}\partial_i
u\partial_j \bar{u}\partial_{ij}
wdx-\int_{\mathbb{R}^N}|u|^2\Delta^2wdx+4b\int_{\mathbb{R}^N}|x|^{-4}|u|^2x\cdot\nabla
wdx\\
&\
-\frac{2\mathcal{A}(N-\alpha)}{p}\int_{\mathbb{R}^N}\int_{\mathbb{R}^N}\frac{(x-y)\cdot(\nabla
w(x)-\nabla w(y))|u(x)|^p|u(y)|^p}{|x-y|^{N-\alpha+2}}dxdy\\
&\
+\left(\frac{4}{p}-2\right)\int_{\mathbb{R}^N}(I_\alpha\ast|u|^p)|u|^p\Delta
wdx.
\end{split}
\end{equation*}
\end{lemma}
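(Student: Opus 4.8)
The plan is to derive both identities for $u$ by letting $\delta\to0$ in the identities for the approximating solutions furnished by Lemma \ref{lem5.1}. For each $\delta>0$ let $u_\delta$ be the maximal-lifespan solution of $\mathrm{(CH_b^\delta)}$ with the same datum $u_0$, which exists by Proposition \ref{pro5.1}. Since $\sqrt{w}\,u_0\in L^2(\mathbb{R}^N)$, Lemma \ref{lem5.1} applies to $u_\delta$ and, integrating \eqref{e5.2} twice in time, gives
\begin{equation*}
\|\sqrt{w}\,u_\delta(t)\|_{L^2}^2=\|\sqrt{w}\,u_0\|_{L^2}^2+2t\,\mathrm{Im}\int_{\mathbb{R}^N}\bar{u}_0\nabla w\cdot\nabla u_0\,dx+\int_0^t\int_0^s\Phi_\delta(\tau)\,d\tau\,ds,
\end{equation*}
where $\Phi_\delta$ denotes the right-hand side of \eqref{e5.2}. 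The whole proof reduces to showing that, as $\delta\to0$, each term of this identity (and of its once-integrated analogue coming from \eqref{e5.1}) converges to the corresponding object built from $u$, and that the limiting integrand is exactly the $\Phi_0$ claimed in the statement; differentiating the limiting $C^2$ identity then yields the two virial formulas for $u$.

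The first ingredient is the convergence $u_\delta\to u$ in $C([-T_1,T_2],H^1(\mathbb{R}^N))$ for every $[-T_1,T_2]\subset I$. Writing both equations through the free Schr\"odinger group and treating the inverse-square terms as sources, the difference $u_\delta-u$ solves a Duhamel equation driven by the potential perturbation $\big(\tfrac{b}{|x|^2+\delta}-\tfrac{b}{|x|^2}\big)u$ plus terms Lipschitz in $u_\delta-u$; the Strichartz estimates of Proposition \ref{pro 2.2}, the Hardy inequality \eqref{e1.1} to control the potential factors, and a Gronwall argument as in Lemma \ref{lem unique} give the convergence, once one notes that the perturbation tends to $0$ in the relevant dual Strichartz norm by \eqref{e5.21} and dominated convergence; a uniform $H^1$ bound (hence a common existence interval) follows by the usual continuation argument from the finiteness of $\sup_{[-T_1,T_2]}\|u\|_{H^1}$. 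Granting this, every term of $\Phi_\delta$ converges pointwise in $\tau$: the gradient, the bilaplacian and the two nonlocal terms by the $H^1$-convergence together with the Hardy--Littlewood--Sobolev inequality, and the singular potential term because the coefficient $\tfrac{x\cdot\nabla w}{(|x|^2+\delta)^2}$ converges pointwise to $\tfrac{x\cdot\nabla w}{|x|^4}$ and is dominated by $C|x|^{-2}$, so that $\int\tfrac{x\cdot\nabla w}{(|x|^2+\delta)^2}|u_\delta|^2\,dx\to\int\tfrac{x\cdot\nabla w}{|x|^4}|u|^2\,dx$ follows from Hardy's inequality and $\int|x|^{-2}|u_\delta-u|^2\,dx\lesssim\|u_\delta-u\|_{\dot H^1}^2\to0$. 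Since $\Phi_\delta$ is moreover uniformly bounded on $[-T_1,T_2]$, dominated convergence passes the limit through the double time integral.

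What remains, and what I expect to be the main obstacle, is the convergence of the left-hand side $\|\sqrt{w}\,u_\delta(t)\|_{L^2}^2\to\|\sqrt{w}\,u(t)\|_{L^2}^2$, and the analogous convergence $\mathrm{Im}\int\bar u_\delta\nabla w\cdot\nabla u_\delta\,dx\to\mathrm{Im}\int\bar u\nabla w\cdot\nabla u\,dx$ needed for \eqref{e5.1}. For the bounded weight $w=\varphi_R$ these are immediate from $u_\delta(t)\to u(t)$ in $L^2$. For $w=|x|^2$ the weight is unbounded and is not controlled by the $H^1$-convergence, so I would first derive a uniform bound $\sup_{\delta,\,t\in[-T_1,T_2]}\||x|u_\delta(t)\|_{L^2}<\infty$ from the first-order identity \eqref{e5.1}: Cauchy--Schwarz gives $\big|\tfrac{d}{dt}\||x|u_\delta\|_{L^2}^2\big|\lesssim\||x|u_\delta\|_{L^2}\|\nabla u_\delta\|_{L^2}$, whence $\tfrac{d}{dt}\||x|u_\delta\|_{L^2}\lesssim\|\nabla u_\delta\|_{L^2}$, which is uniformly bounded. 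This bound yields the weak convergence $|x|u_\delta(t)\rightharpoonup|x|u(t)$ in $L^2$ and, via Fatou's lemma, that $\sqrt{w}\,u(t)\in L^2$ with $\|\sqrt{w}\,u(t)\|_{L^2}^2\le\lim_\delta\|\sqrt{w}\,u_\delta(t)\|_{L^2}^2$. Upgrading this to the strong (norm) convergence needed for equality amounts to excluding a loss of weighted mass at spatial infinity; I would establish it by a tightness argument, using $\int_{|x|>R}|x|^2|u_0|^2\,dx\to0$ as $R\to\infty$ together with a uniform-in-$\delta$ control of the time growth of the truncated weighted mass $\int_{|x|>R}|x|^2|u_\delta|^2\,dx$, obtained from a cut-off version of \eqref{e5.2}. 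Once this strong convergence is in hand, letting $\delta\to0$ in the once- and twice-integrated forms of \eqref{e5.1}--\eqref{e5.2} for $u_\delta$ produces the stated first- and second-order virial identities for $u$, the second one via the $C^2$ regularity of the limiting right-hand side.
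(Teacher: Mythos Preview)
Your overall plan---pass to the limit $\delta\to0$ in the twice-integrated virial identity for $u_\delta$---is exactly the paper's strategy. The difference, and the gap, lies in how you obtain $u_\delta\to u$ in $C([-T_1,T_2],H^1)$.

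You propose a direct Strichartz/Gronwall argument, treating the potential discrepancy $\big(\tfrac{b}{|x|^2+\delta}-\tfrac{b}{|x|^2}\big)u$ as a small source. The trouble is that this source is not controllable in any dual Strichartz norm at the $H^1$ level: pointwise one has $\big|\tfrac{b}{|x|^2+\delta}-\tfrac{b}{|x|^2}\big|\le |b|\,|x|^{-2}$, and placing $|x|^{-2}u$ in $L^{q'}$ with $q'\in[2N/(N+2),2]$ requires more than Hardy's inequality \eqref{e1.1} provides (e.g.\ $q'=2$ would need Hardy--Rellich, i.e.\ $u\in H^2$, and the endpoint $q'=2N/(N+2)$ fails for the same scaling reason). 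The assertion that ``the perturbation tends to $0$ in the relevant dual Strichartz norm by \eqref{e5.21}'' is therefore unjustified: \eqref{e5.21} gives only $H^{-1}$ convergence, which is not a dual Strichartz space. Relatedly, the uniform-in-$\delta$ $H^1$ bound you invoke (``by the usual continuation argument'') is not automatic and requires its own proof.

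The paper circumvents both problems by an indirect compactness route. First (Step~1) it secures a $\delta$-uniform bound $\|u_\delta(t)\|_{H^1}\le M_0$ on a common interval $[-T,T]$ directly from the conservation laws, \eqref{e5.13}, and the H\"older-type estimate (G3) applied to the nonlinear energy, combined with a $C^{1/2}$-in-time bound on $\|u_\delta(t)-u_0\|_{L^2}$ coming from $\|\partial_t u_\delta\|_{H^{-1}}\le C(M)$. Then (Step~2) this uniform bound yields a weakly-$H^1$ convergent subsequence $u_{\delta_j}(t)\rightharpoonup v(t)$; one identifies $v$ as a weak solution of $\mathrm{(CH_b)}$ via \eqref{e5.21}, Lemma~\ref{lem3.2}/(G5), and the mass law, so that $v=u$ by uniqueness (Lemma~\ref{lem unique}). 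Strong $L^2$ convergence then follows from equality of $L^2$ norms, and strong $H^1$ convergence is extracted from the energy identities (this is where Lemma~2.2 of \cite{Suzuki 2014} is invoked). The weighted convergence $\sqrt{w}\,u_{\delta_j}\to\sqrt{w}\,u$ in $L^2$ is then read off from Lemma~2.3 of \cite{Suzuki 2014}, after which the limit passes as you describe. Your tightness sketch for $w=|x|^2$ is in the right spirit, but once strong $H^1$ convergence is available the cited lemma handles it cleanly without the cut-off argument.
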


\begin{proof}
Inspired by the proof of (3.1) in \cite{Suzuki 2014}, we prove this
lemma in two steps. For any $\delta>0$, let $u_\delta\in
C(I_{\delta}, H^1(\mathbb{R}^N))\cap C^1(I_{\delta},
H^{-1}(\mathbb{R}^N))$ be the maximal-lifespan solution to
$\mathrm{(CH_b^\delta)}$ with initial value $u_0$.

Step 1.  We claim  that there exists $M_0>0$ and $T>0$ such that
\begin{equation}\label{e5.17}
\|u_\delta(t)\|_{H^1}\leq M_0 \ \mathrm{for\ any\ }\delta>0 \
\mathrm{and}\ t\in [-T,T].
\end{equation}
Indeed, we define
\begin{equation*}
M:=2\sqrt{\left(1+\frac{4b_+}{(N-2)^2}\right)}\,\Large{/}\sqrt{\left(1-\frac{4b_-}{(N-2)^2}\right)}\,\|u_0\|_{H^1}
\end{equation*}
and
\begin{equation*}
\tau_{\delta}:=\sup_{T>0}\{\|u_\delta(t)\|_{H^1}\leq M,\ t\in
[-T,T]\}.
\end{equation*}
If $\tau_{\delta}=\infty$, the claim is proved. Thus, we assume
$\tau_{\delta}<\infty$. Since $u_\delta\in C(I_\delta,
H^1(\mathbb{R}^N))$, $\tau_\delta$ satisfies
\begin{equation}\label{e5.34}
\|u_\delta(\tau_\delta)\|_{H^1}=M\ \mathrm{or}\
\|u_\delta(-\tau_\delta)\|_{H^1}=M.
\end{equation}
It follows from the conservation laws, (G3) and (\ref{e5.13}) that
\begin{equation}\label{e5.14}
\begin{split}
&\left(1-\frac{4b_-}{(N-2)^2}\right)\|u_\delta\|_{H^1}^2
-\left(1+\frac{4b_+}{(N-2)^2}\right)\|u_0\|_{H^1}^2\\
&\leq \int_{\mathbb{R}^N}(|\nabla
u_\delta|^2+\frac{b}{|x|^2+\delta}|u_\delta|^2+|u_\delta|^2)dx\\
&\qquad-\int_{\mathbb{R}^N}(|\nabla
u_0|^2+\frac{b}{|x|^2+\delta}|u_0|^2+|u_0|^2)dx\\
&=\frac{1}{p}\int_{\mathbb{R}^N}(I_\alpha\ast|u_\delta|^p)|u_\delta|^pdx
-\frac{1}{p}\int_{\mathbb{R}^N}(I_\alpha\ast|u_0|^p)|u_0|^pdx\\
&\lesssim \epsilon+C_\epsilon(M)\|u_\delta-u_0\|_{L^2}
\end{split}
\end{equation}
for any $t\in[-\tau_\delta,\tau_\delta]$ and $\epsilon>0$. On the
other hand, in view of (G2) and the fact that $u_\delta$ satisfies
equation $\mathrm{(CH_b^\delta)}$, we know
\begin{equation}\label{e5.15}
\begin{split}
\|\partial_tu_\delta\|_{H^{-1}}&\leq \|-\Delta
u_\delta+\frac{b}{|x|^2+\delta}u_\delta\|_{H^{-1}}+\|(I_\alpha\ast|u_\delta|^p)|u_\delta|^{p-2}u_\delta\|_{H^{-1}}\\
&\leq C(M)
\end{split}
\end{equation}
for any $t\in[-\tau_\delta,\tau_\delta]$. Applying Lemma 3.3.6 in
\cite{Cazenave 2003}, we obtain that
\begin{equation}\label{e5.16}
\|u_\delta(t)-u_\delta(s)\|_{L^2}\leq C(M)|t-s|^{\frac{1}{2}}\
\mathrm{for\ any\ }t,\ s\in [-\tau_\delta,\tau_\delta].
\end{equation}
Inserting (\ref{e5.16}) with $s=0$ into (\ref{e5.14}), we have
\begin{equation*}
\left(1-\frac{4b_-}{(N-2)^2}\right)\|u_\delta\|_{H^1}^2
-\left(1+\frac{4b_+}{(N-2)^2}\right)\|u_0\|_{H^1}^2\leq
\epsilon+C_\epsilon(M)C(M)|t|^{\frac{1}{2}}.
\end{equation*}
Letting $t=\tau_{\delta}$ or $-\tau_{\delta}$ and applying
(\ref{e5.34}), we have
\begin{equation*}
3\left(1+\frac{4b_+}{(N-2)^2}\right)\|u_0\|_{H^1}^2\leq
\epsilon+C_\epsilon(M)C(M)|\tau_\delta|^{\frac{1}{2}},
\end{equation*}
which implies that $\tau_\delta$ has a positive lower bound by
choosing $\epsilon>0$ small enough. The  proof of the claim is
complete.

Step 2. In view of (\ref{e5.17}) and (\ref{e5.15}), by Proposition
1.1.2 in \cite{Cazenave 2003}, there exists
$\{\delta_j\}_{j=1}^{\infty}\subset (0,\infty)$ with $\delta_j\to 0$
as $j\to \infty$ and $v\in C_{w}([-T,T], H^1(\mathbb{R}^N))$ $\cap
W^{1,\infty}([-T,T], H^{-1}(\mathbb{R}^N))$ such that
\begin{equation}\label{e5.22}
u_{\delta_j}(t)\to v(t)\ \mathrm{weakly\ in\ }H^1(\mathbb{R}^N) \
\mathrm{for\ any\ }t\in[-T,T]
\end{equation}
and
\begin{equation}\label{e5.23}
\partial_tu_{\delta_j}(t)\to \partial_tv(t)\ \mathrm{weakly *\ in\
}L^{\infty}([-T,T], H^{-1}(\mathbb{R}^N)).
\end{equation}
By (\ref{e5.21}) and  (\ref{e5.22}), we have
\begin{equation}\label{e5.24}
\left(-\Delta +\frac{b}{|x|^2+\delta_j}\right)u_{\delta_j}(t)\to
\left(-\Delta +\frac{b}{|x|^2}\right)v(t)\ \mathrm{weakly\ in\
}H^{-1}(\mathbb{R}^N)
\end{equation}
as $j\to\infty$ for  any $t\in[-T,T]$. By (\ref{e5.17}),
(\ref{e5.23}), (\ref{e5.24}) and the fact that $u_{\delta_j}$
satisfies equation $\mathrm{(CH_b^{\delta_j})}$, there exists $f$
such that
\begin{equation}\label{e5.37}
\begin{split}
-(I_\alpha\ast|u_{\delta_j}|^p)|u_{\delta_j}|^{p-2}u_{\delta_j}
&=i\partial_tu_{\delta_j}-(-\Delta+\frac{b}{|x|^2+\delta_j})u_{\delta_j}\\
&\to i\partial_tv-\mathcal{L}_bv=:f\ \mathrm{weakly*\ in }\
L^{\infty}([-T,T],H^{-1}(\mathbb{R}^N)).
\end{split}
\end{equation}
By (G5), (\ref{e5.22}) and (\ref{e5.37}), we have
\begin{equation}\label{e5.36}
\begin{split}
&\mathrm{Im}\int_{0}^{t}\langle f(s),v(s)\rangle_{H^{-1},H^1}ds\\
&=\lim_{n\to \infty}\mathrm{Im}\int_{0}^{t}\langle
-(I_\alpha\ast|u_{\delta_j}(s)|^p)|u_{\delta_j}(s)|^{p-2}u_{\delta_j}(s),u_{\delta_j}(s)\rangle_{H^{-1},H^1}ds\\
&=0,\ \mathrm{for\ any\ } t\in[-T,T].
\end{split}
\end{equation}
By  (\ref{e5.37}) and (\ref{e5.36}), we obtain the conservation of
mass  of $v$. Hence, it follows from (\ref{e5.22}), the conservation
laws of $u_{\delta_j}$ and $u_{\delta_j}(0)=u_0$ that
\begin{equation}\label{e5.25}
\|v(t)\|_{L^2}=\|v(0)\|_{L^2}=\|u_0\|_{L^2}=\|u_{\delta_j}\|_{L^2},\
\ \mathrm{for\ any\ }  t\in[-T,T].
\end{equation}
Hence we see from (\ref{e5.22}) and (\ref{e5.25}) that
\begin{equation*}
u_{\delta_j}(t)\to v(t)\ \mathrm{strongly\ in\ }L^2(\mathbb{R}^N)\
\mathrm{for\ any\ }t\in[-T,T]\ \mathrm{as}\ j\to\infty,
\end{equation*}
which combines with (\ref{e5.17}) and the Sobolev imbedding theorem
shows that
\begin{equation}\label{e5.*4}
u_{\delta_j}(t)\to v(t)\ \mathrm{strongly\ in\ }L^r(\mathbb{R}^N)\
 \mathrm{for\ any\ }r\in[2,2^*)\ \mathrm{and}\ t\in[-T,T].
\end{equation}
By (\ref{e5.17}), (\ref{e5.37}) and (\ref{e5.*4}), we have
$f=-(I_\alpha\ast|v|^p)|v|^{p-2}v$. Thus, $v$ satisfies
\begin{equation}\label{e5*3}
\begin{cases}
i\partial_tv-\mathcal{L}_bv=-(I_{\alpha}\ast|v|^{p})|v|^{p-2}v,\
\mathrm{in}\ L^{\infty}([-T,T],H^{-1}(\mathbb{R}^{N})),\\
v(0,x)=u_0(x),\quad x\in \mathbb{R}^{N}.
\end{cases}
\end{equation}
On the other hand, there exists a unique weak solution to
(\ref{e5*3}), see the proof of Theorem \ref{thm local}. Hence, $v=u$
and $u$ satisfies the conservation laws
\begin{equation}\label{e5.*5}
\|u(t)\|_{L^2}=\|u_0\|_{L^2},\ E_b(u(t))=E_b(u_0),\ \mathrm{for\
any}\ t\in[-T,T].
\end{equation}
By (\ref{e5.*4}), (\ref{e5.*5}), the conservation laws of
$u_{\delta_j}$, we have
\begin{equation*}
\begin{split}
\|\nabla
u_{\delta_j}\|_{L^2}^2+b\left\|\frac{u_{\delta_j}}{\sqrt{|x|^2+\delta_j}}\right\|_{L^2}^2
&=2E_{b,\delta_j}(u_{\delta_j}(t))+\frac{1}{p}\int_{\mathbb{R}^N}(I_\alpha\ast|u_{\delta_j}|^p)|u_{\delta_j}|^pdx\\
&\to2E_{b,\delta_j}(u_0)+\frac{1}{p}\int_{\mathbb{R}^N}(I_\alpha\ast|u|^p)|u|^pdx\\
&\to 2E_{b}(u_0)+\frac{1}{p}\int_{\mathbb{R}^N}(I_\alpha\ast|u|^p)|u|^pdx\\
&=\|\nabla
u\|_{L^2}^2+b\left\|\frac{u_{\delta_j}}{|x|}\right\|_{L^2}^2.
\end{split}
\end{equation*}
By Lemma 2.2 in \cite{Suzuki 2014}, we obtain $\nabla
u_{\delta_j}\to \nabla u$ strongly in $L^2(\mathbb{R}^N)$. Hence,
\begin{equation}\label{e5.28}
u_{\delta_j}\to u\ \mathrm{strongly\ in\ } H^1(\mathbb{R}^N)\
\mathrm{for\ any\ }t\in[-T,T].
\end{equation}
On the other hand, Lemma 6.5.2 in \cite{Cazenave 2003} implies
\begin{equation*}
\int_{\mathbb{R}^N}w|u|^2dx=\int_{\mathbb{R}^N}w|u_0|^2dx+2\mathrm{Im}\int_{0}^{t}\int_{\mathbb{R}^N}\bar{u}\nabla
w\cdot\nabla udx.
\end{equation*}
Thus Lemma 2.3 in \cite{Suzuki 2014} with (\ref{e5.17}) and
(\ref{e5.28}) give that
\begin{equation*}
\sqrt{w(x)}u_{\delta_j}(t)\to \sqrt{w(x)}u(t) \ \mathrm{strongly\
in\ } L^2(\mathbb{R}^N)\ \mathrm{for\ any\ }t\in[-T,T].
\end{equation*}
Replacing $u$ by $u_\delta$ in (\ref{e5.12'}) and letting $\delta\to
0$, we complete the proof.
\end{proof}

Let $w=|x|^2$ in Lemma \ref{lem6.1}, then
\begin{equation*}
\nabla w=2x,\ \Delta w=2N,\ \partial_{ij}w=2\delta_{ij},\
\Delta^2w=0,\ \partial_iu\partial_j\bar{u}\partial_{ji}w=2|\nabla
u|^2,
\end{equation*}
\begin{equation*}
x\cdot\nabla w=2|x|^2,\ (x-y)\cdot(\nabla w(x)-\nabla
w(y))=2|x-y|^2.
\end{equation*}
Hence, we have the following standard virial identity.

\begin{lemma}\label{lem6.2}
Let $N\geq 3$, $\alpha\in((N-4)_+,N)$, $b>-\frac{(N-2)^2}{4}$ and
$b\neq0$, $2\leq p<\frac{N+\alpha}{N-2}$, $u_0\in H^1(\mathbb{R}^N)$
with $xu_0\in L^2(\mathbb{R}^N)$. Assume that $u(x,t)\in
C(I,H^1(\mathbb{R}^N))\cap C^1(I,H^{-1}(\mathbb{R}^N))$ is the
maximal-lifespan  solution to $\mathrm{(CH_b)}$. Then $xu\in
C(I,L^2(\mathbb{R}^N))$ and  for any $t\in I$,
\begin{equation*}
\frac{d^2}{dt^2}\|xu(t)\|_{L^2}^2=8\|u\|_{\dot{H}_b^1}^2+\frac{4\alpha+4N-4Np}{p}\int_{\mathbb{R}^N}(I_\alpha\ast|u|^p)|u|^pdx.
\end{equation*}
\end{lemma}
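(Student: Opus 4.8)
The plan is to specialize Lemma~\ref{lem6.1} to $w(x)=|x|^2$ and simplify. Since $\sqrt{w}=|x|$, the hypothesis $xu_0\in L^2(\mathbb{R}^N)$ is precisely $\sqrt{w}\,u_0\in L^2(\mathbb{R}^N)$, so Lemma~\ref{lem6.1} applies; it yields $xu\in C(I,L^2(\mathbb{R}^N))$ and the second-derivative formula for $V_w(t)=\int_{\mathbb{R}^N}|x|^2|u|^2\,dx=\|xu(t)\|_{L^2}^2$, which is exactly the quantity we must differentiate. All that remains is to insert the elementary identities recorded just before the statement, namely $\nabla w=2x$, $\Delta w=2N$, $\partial_{ij}w=2\delta_{ij}$, $\Delta^2 w=0$, $x\cdot\nabla w=2|x|^2$ and $(x-y)\cdot(\nabla w(x)-\nabla w(y))=2|x-y|^2$, and then collect terms.

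First I would treat the two quadratic terms. The gradient term becomes $4\mathrm{Re}\int_{\mathbb{R}^N}\partial_iu\,\partial_j\bar{u}\,(2\delta_{ij})\,dx=8\|\nabla u\|_{L^2}^2$ and the potential term becomes $4b\int_{\mathbb{R}^N}|x|^{-4}|u|^2\,(2|x|^2)\,dx=8b\int_{\mathbb{R}^N}|x|^{-2}|u|^2\,dx$; adding them and recalling $\|u\|_{\dot{H}_b^1}^2=\int_{\mathbb{R}^N}(|\nabla u|^2+b|x|^{-2}|u|^2)\,dx$ gives the contribution $8\|u\|_{\dot{H}_b^1}^2$, while the biharmonic term vanishes since $\Delta^2(|x|^2)=0$. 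For the nonlinear terms, substituting $(x-y)\cdot(\nabla w(x)-\nabla w(y))=2|x-y|^2$ cancels two powers in the kernel, and using $I_\alpha(x)=\mathcal{A}|x|^{-(N-\alpha)}$ to write $\mathcal{A}\int_{\mathbb{R}^N}\int_{\mathbb{R}^N}|x-y|^{-(N-\alpha)}|u(x)|^p|u(y)|^p\,dx\,dy=\int_{\mathbb{R}^N}(I_\alpha*|u|^p)|u|^p\,dx$, the nonlocal double integral reduces to $-\tfrac{4(N-\alpha)}{p}\int_{\mathbb{R}^N}(I_\alpha*|u|^p)|u|^p\,dx$. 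The last term contributes $2N(\tfrac{4}{p}-2)\int_{\mathbb{R}^N}(I_\alpha*|u|^p)|u|^p\,dx$ because $\Delta w=2N$. Summing the coefficients of $\int_{\mathbb{R}^N}(I_\alpha*|u|^p)|u|^p\,dx$ gives $-\tfrac{4(N-\alpha)}{p}+\tfrac{8N}{p}-4N=\tfrac{4\alpha+4N-4Np}{p}$, which is precisely the stated constant.

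I do not expect any genuine obstacle, since once Lemma~\ref{lem6.1} is granted the argument is pure bookkeeping; the only points meriting a word of care are the identity $\|xu\|_{L^2}=\|\sqrt{w}\,u\|_{L^2}$ that lets the $L^2$-regularity of Lemma~\ref{lem6.1} transfer to $xu$, and the symmetric rewriting of the Riesz kernel used to recognize the convolution, both of which are routine.
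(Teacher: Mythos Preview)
Your proposal is correct and is exactly the paper's approach: the paper simply records the elementary identities for $w(x)=|x|^2$ immediately before the lemma and states that the result follows from Lemma~\ref{lem6.1}. Your term-by-term bookkeeping matches precisely, including the combination $-\tfrac{4(N-\alpha)}{p}+\tfrac{8N}{p}-4N=\tfrac{4\alpha+4N-4Np}{p}$ for the nonlinear coefficient.
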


Let $\psi:[0,\infty)\to [0,\infty)$ be a smooth function satisfying
\begin{equation*}
\psi(r)=\left\{\begin{array}{ll} r^2,&\ 0\leq r\leq 1,\\
\mathrm{constant},&\ r\geq 10,
\end{array}\right.\ \psi'(r)\leq 2r,\ \psi''(r)\leq 2\ \mathrm{for\ any}\
r\geq 0.
\end{equation*}
For any $R>1$, we define
\begin{equation}\label{e5.61}
\psi_R(r)=R^2\psi(\frac{r}{R}),\  \varphi_R(x)=\psi_R(|x|).
\end{equation}

By letting  $w(x)=\varphi_R(x)$ in Lemma \ref{lem6.1}, we have the
following local virial identity.

\begin{lemma}\label{lem6.3}
Let $N\geq 3$, $\alpha\in ((N-4)_+,N)$, $b>-\frac{(N-2)^2}{4}$ and
$b\neq0$, $2\leq p<p^b$, $u_0\in H_r^1(\mathbb{R}^N)$ and $u\in
C(I,H^1(\mathbb{R}^N))\cap C^1(I,H^{-1}(\mathbb{R}^N))$ be the
maximal-lifespan radial solution to $\mathrm{(CH_b)}$. Then for any
$t\in I$,
\begin{equation*}
\begin{split}
\frac{d^2}{dt^2}\|\varphi_R(x)u\|_{L^2}^2&=8\|u\|_{\dot{H}_b^1}^2+\frac{4\alpha+4N-4Np}{p}\int_{\mathbb{R}^N}(I_\alpha\ast|u|^p)|u|^pdx\\
&\quad +
O(R^{-2})+O(R^{-(N-1)(p-\frac{N+\alpha}{N})}\|u\|_{\dot{H}_b^1}^{p-\frac{N+\alpha}{N}})\\
&\quad+
O(R^{-(N-\alpha+\frac{N-1}{2}(p-2))}\|u\|_{\dot{H}_b^1}^{\frac{N+1}{2}(p-2)}).
\end{split}
\end{equation*}
\end{lemma}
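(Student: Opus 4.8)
The plan is to specialize the general second–variation formula of Lemma \ref{lem6.1} to the weight $w=\varphi_R$, and then to split off a ``main part'' that reproduces the exact virial quantity, the remainder being supported where $\varphi_R$ differs from $|x|^2$. First I would record the derivatives of the radial weight: writing $r=|x|$, one has $\nabla\varphi_R=\psi_R'(r)\tfrac{x}{r}$, $x\cdot\nabla\varphi_R=r\psi_R'(r)$, $\Delta\varphi_R=\psi_R''(r)+\tfrac{N-1}{r}\psi_R'(r)$, and $\Delta^2\varphi_R$, all of which coincide with the corresponding derivatives of $|x|^2$ on $B_R(0)$ since $\psi_R(r)=r^2$ there. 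For a radial $u$ the Hessian contraction collapses: because $\partial_iu\,\partial_j\bar u=|\partial_ru|^2\tfrac{x_ix_j}{r^2}$ and $\tfrac{x_ix_j}{r^2}\partial_{ij}\varphi_R=\psi_R''(r)$, the first term of Lemma \ref{lem6.1} equals $4\int_{\mathbb R^N}\psi_R''(r)|\nabla u|^2\,dx$. The key structural remark is that each integrand produced by $w=\varphi_R$ equals the integrand of the $w=|x|^2$ computation plus a difference supported in $B_R(0)^c$ (and, for the Choquard term, in the region where at least one of $x,y$ lies in $B_R(0)^c$, since $\nabla\varphi_R=2x$ on $B_R(0)$). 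Integrating the $|x|^2$-part over all of space and carrying out exactly the bookkeeping of the passage from Lemma \ref{lem6.1} to Lemma \ref{lem6.2} reassembles the full-space main term $8\|u\|_{\dot{H}_b^1}^2+\tfrac{4\alpha+4N-4Np}{p}\int_{\mathbb R^N}(I_\alpha\ast|u|^p)|u|^p\,dx$, which is finite for $u\in H^1$ by Hardy's inequality \eqref{e1.1} and Remark \ref{rek1.31}(2).

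Next I would estimate the $B_R(0)^c$-supported remainders. These split into (i) a biharmonic piece $-\int|u|^2\Delta^2\varphi_R$, (ii) a potential piece $4b\int|x|^{-4}|u|^2(r\psi_R'-2r^2)$, (iii) a kinetic piece $4\int(\psi_R''-2)|\nabla u|^2$, (iv) a local nonlinear piece $(\tfrac4p-2)\int(\Delta\varphi_R-2N)(I_\alpha\ast|u|^p)|u|^p$, and (v) a nonlocal kernel piece. For (i), scaling gives $|\Delta^2\varphi_R|\lesssim R^{-2}$ with support in $\{R\le|x|\le 10R\}$, so mass conservation yields $O(R^{-2})$; for (ii), the bounds $0\le\psi_R'(r)\le 2r$ force $|r\psi_R'-2r^2|\lesssim r^2$ on $B_R(0)^c$, whence the piece is $\lesssim|b|R^{-2}\|u\|_{L^2}^2=O(R^{-2})$. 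Piece (iii) is non-positive because $\psi_R''\le 2$; it is precisely the favorable, sign-definite term exploited in the blowup argument, so for the upper bound actually used in Theorem \ref{thm1.4}(ii) it may simply be discarded (this is the reason the identity is used as an upper estimate rather than a literal equality with only the displayed $O$-errors).

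The heart of the matter, and where radiality enters decisively, is the two nonlinear remainders (iv) and (v); this is the step I expect to be the main obstacle. Here I would invoke the radial Sobolev embedding of Lemma \ref{lem Radial Sob}, $\sup_{x\neq0}|x|^{(N-2s)/2}|u(x)|\lesssim\|u\|_{L^2}^{1-s}\|u\|_{\dot{H}^1}^{s}$, together with the equivalence $\|\cdot\|_{\dot{H}^1}\sim\|\cdot\|_{\dot{H}_b^1}$, to extract spatial decay from the factors of $u$ whose arguments lie in $B_R(0)^c$. For (iv), $|\Delta\varphi_R-2N|\lesssim 1$ on $B_R(0)^c$ reduces matters to $\int_{|x|\ge R}(I_\alpha\ast|u|^p)|u|^p\,dx$; applying the pointwise radial bound to the surplus factors of $u$ beyond the Hardy–Littlewood–Sobolev–critical power $\tfrac{N+\alpha}{N}$ and estimating the convolution by Lemma \ref{lem HLS} produces the rate $R^{-(N-1)(p-\frac{N+\alpha}{N})}\|u\|_{\dot{H}_b^1}^{p-\frac{N+\alpha}{N}}$. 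For (v), the mean value theorem bounds the numerator $(x-y)\cdot(\nabla\varphi_R(x)-\nabla\varphi_R(y))-2|x-y|^2$ by $\lesssim|x-y|^2$ on the relevant region; splitting the double integral according to which of $x,y$ lies outside $B_R(0)$, applying Lemma \ref{lem Radial Sob} to the far factors, and controlling the rest by Hardy–Littlewood–Sobolev and mass conservation yields the rate $R^{-(N-\alpha+\frac{N-1}{2}(p-2))}\|u\|_{\dot{H}_b^1}^{\frac{N+1}{2}(p-2)}$. The difficulty is exactly this nonlocal estimate: because the Riesz kernel couples the near and far regions one cannot localize $I_\alpha\ast|u|^p$, and tracking how many factors of $u$ may be given decay—hence the precise exponents of $R$ and of $\|u\|_{\dot{H}_b^1}$—requires the careful region decomposition above. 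This is also the origin of the restriction $p<\tfrac{2N+6}{N+1}$ needed in the application, which is exactly the condition $\tfrac{N+1}{2}(p-2)<2$ making the last error absorbable into $\|u\|_{\dot{H}_b^1}^2$.
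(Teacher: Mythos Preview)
Your overall strategy coincides with the paper's: specialize Lemma~\ref{lem6.1} to $w=\varphi_R$, peel off the full-space virial quantity, and bound the remainders supported in $B_R(0)^c$ using radial Sobolev decay. Your treatment of the linear pieces (i)--(iii) matches the paper's (including the observation that the kinetic remainder is really only an upper bound).

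The one substantive inaccuracy is in how you distribute the two nonlinear error rates between (iv) and (v). In the paper both (iv) and (v) (together with the subtracted nonlinear term $-\tfrac{4\alpha+4N-4Np}{p}\int_{|x|>R}(I_\alpha\ast|u|^p)|u|^p$) are bounded by the single quantity $\int_{|x|>R}(I_\alpha\ast|u|^p)|u|^p\,dx$, and \emph{both} rates then arise from the $y$-splitting of this one integral into $|y|>R/2$ and $|y|<R/2$. Your plan for (iv) --- radial decay on the ``surplus'' $x$-factors plus Hardy--Littlewood--Sobolev on the convolution --- only yields $R^{-\frac{N-1}{2}(p-\frac{N+\alpha}{N})}$ times an $H^1$-power that is not purely $\|u\|_{\dot H_b^1}$; the exponent you state requires radial decay on the $y$-factors as well, which forces the far--far restriction $|y|>R/2$ and leaves a far--near piece unaccounted for. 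Conversely, the second rate does \emph{not} come from Hardy--Littlewood--Sobolev at all: on the far--near region $|x|>R$, $|y|<R/2$ one has $|x-y|\gtrsim R$, so the Riesz kernel itself is $\lesssim R^{-(N-\alpha)}$, decoupling the double integral into $\bigl(\int_{|y|<R/2}|u|^p\bigr)\bigl(\int_{|x|>R}|u|^p\bigr)$; the first factor is handled by interpolation and Sobolev, the second by mass plus radial $L^\infty$ decay with $s=\tfrac12$, producing exactly $R^{-(N-\alpha+\frac{N-1}{2}(p-2))}\|u\|_{\dot H_b^1}^{\frac{N+1}{2}(p-2)}$. Once you make this single correction --- split in $y$, not by which term (iv) or (v) you are in --- your argument becomes the paper's.
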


\begin{proof}
By Lemma \ref{lem6.1}, we have
\begin{equation}\label{e5.29'}
\begin{split}
&\frac{d^2}{dt^2}\|\varphi_R(x)u\|_{L^2}^2\\
&=4\mathrm{Re}\int_{\mathbb{R}^N}\partial_i u\partial_j
\bar{u}\partial_{ij}
\varphi_Rdx-\int_{\mathbb{R}^N}|u|^2\Delta^2\varphi_Rdx+4b\int_{\mathbb{R}^N}|x|^{-4}|u|^2x\cdot\nabla
\varphi_Rdx\\
&\qquad
-\frac{2\mathcal{A}(N-\alpha)}{p}\int_{\mathbb{R}^N}\int_{\mathbb{R}^N}\frac{(x-y)\cdot(\nabla
\varphi_R(x)-\nabla \varphi_R(y))|u(x)|^p|u(y)|^p}{|x-y|^{N-\alpha+2}}dxdy\\
&\qquad+\left(\frac{4}{p}-2\right)\int_{\mathbb{R}^N}(I_\alpha\ast|u|^p)|u|^p\Delta
\varphi_Rdx.
\end{split}
\end{equation}
Direct calculation gives that
\begin{equation}\label{e6.11}
\begin{split}
4\mathrm{Re}\int_{\mathbb{R}^N}\partial_i u\partial_j
\bar{u}\partial_{ij}
\varphi_Rdx&=4\mathrm{Re}\int_{\mathbb{R}^N}\partial_i u\partial_j
\bar{u}\partial_{ij}
(|x|^2)dx\\
&\qquad+4\mathrm{Re}\int_{|x|>R}\partial_i u\partial_j
\bar{u}\left(\partial_{ij}\varphi_R-\partial_{ij}(|x|^2)\right)dx,
\end{split}
\end{equation}
\begin{equation}\label{e6.12}
\begin{split}
\int_{\mathbb{R}^N}|u|^2\Delta^2\varphi_Rdx=\int_{\mathbb{R}^N}|u|^2\Delta^2(|x|^2)dx
+\int_{|x|>R}|u|^2(\Delta^2\varphi_R-\Delta^2(|x|^2))dx,
\end{split}
\end{equation}
\begin{equation}\label{e6.13}
\begin{split}
4b\int_{\mathbb{R}^N}|x|^{-4}|u|^2x\cdot\nabla
\varphi_Rdx&=4b\int_{\mathbb{R}^N}|x|^{-4}|u|^2x\cdot\nabla
(|x|^2)dx\\
&\ +4b\int_{|x|>R}|x|^{-4}|u|^2(x\cdot\nabla \varphi_R-x\cdot\nabla
(|x|^2))dx,
\end{split}
\end{equation}
\begin{equation}\label{e6.14}
\begin{split}
\left(\frac{4}{p}-2\right)&\int_{\mathbb{R}^N}(I_\alpha\ast|u|^p)|u|^p\Delta
\varphi_Rdx\\
&=\left(\frac{4}{p}-2\right)\int_{\mathbb{R}^N}(I_\alpha\ast|u|^p)|u|^p\Delta
(|x|^2)dx\\
&\
+\left(\frac{4}{p}-2\right)\int_{|x|>R}(I_\alpha\ast|u|^p)|u|^p(\Delta
\varphi_R-\Delta (|x|^2))dx
\end{split}
\end{equation}
and
\begin{equation}\label{e6.15}
\begin{split}
&\int_{\mathbb{R}^N}\int_{\mathbb{R}^N}\frac{(x-y)\cdot(\nabla
\varphi_R(x)-\nabla
\varphi_R(y))|u(x)|^p|u(y)|^p}{|x-y|^{N-\alpha+2}}dxdy\\
&=\int_{\mathbb{R}^N}\int_{\mathbb{R}^N}\frac{(x-y)\cdot(\nabla
|x|^2-\nabla
|y|^2)|u(x)|^p|u(y)|^p}{|x-y|^{N-\alpha+2}}dxdy\\
&-\int_{|x|>R}\int_{\mathbb{R}^N}\frac{(x-y)\cdot(\nabla
|x|^2-\nabla |y|^2)|u(x)|^p|u(y)|^p}{|x-y|^{N-\alpha+2}}dxdy\\
&-\int_{|x|<R}\int_{|y|>R}\frac{(x-y)\cdot(\nabla
|x|^2-\nabla |y|^2)|u(x)|^p|u(y)|^p}{|x-y|^{N-\alpha+2}}dxdy\\
&+\int_{|x|<R}\int_{|y|>R}\frac{(x-y)\cdot(\nabla
\varphi_R(x)-\nabla
\varphi_R(y))|u(x)|^p|u(y)|^p}{|x-y|^{N-\alpha+2}}dxdy\\
&+\int_{|x|>R}\int_{\mathbb{R}^N}\frac{(x-y)\cdot(\nabla
\varphi_R(x)-\nabla
\varphi_R(y))|u(x)|^p|u(y)|^p}{|x-y|^{N-\alpha+2}}dxdy.
\end{split}
\end{equation}
Inserting (\ref{e6.11})-(\ref{e6.15}) into (\ref{e5.29'}), we obtain
that
\begin{equation}\label{e5.341}
\begin{split}
&\frac{d^2}{dt^2}\|\varphi_R(x)u\|_{L^2}^2\\
&=8\|u\|_{\dot{H}_b^1(\mathbb{R}^N)}^2+\frac{4\alpha+4N-4Np}{p}\int_{\mathbb{R}^N}(I_\alpha\ast|u|^p)|u|^pdx\\
&-8\|u\|_{\dot{H}_b^1(B_R^c)}^2-\frac{4\alpha+4N-4Np}{p}\int_{|x|>R}(I_\alpha\ast|u|^p)|u|^pdx\\
&+4\mathrm{Re}\int_{|x|>R}\partial_iu\partial_j\bar{u}\partial_{ij}\varphi_Rdx-\int_{|x|>R}|u|^2\Delta^2\varphi_Rdx\\
&+4b\int_{|x|>R}|x|^{-4}|u|^2x\cdot\nabla
\varphi_Rdx+\left(\frac{4}{p}-2\right)\int_{|x|>R}(I_\alpha\ast|u|^p)|u|^p\Delta
\varphi_Rdx\\
&+\frac{2\mathcal{A}(\alpha-N)}{p}\int_{|x|>R}\int_{\mathbb{R}^N}\frac{(x-y)\cdot(\nabla
\varphi_R(x)-\nabla
\varphi_R(y))|u(x)|^p|u(y)|^p}{|x-y|^{N-\alpha+2}}dxdy\\
&-\frac{2\mathcal{A}(\alpha-N)}{p}\int_{|x|<R}\int_{|y|>R}\frac{(x-y)\cdot(\nabla
|x|^2-\nabla |y|^2)|u(x)|^p|u(y)|^p}{|x-y|^{N-\alpha+2}}dxdy\\
&+\frac{2\mathcal{A}(\alpha-N)}{p}\int_{|x|<R}\int_{|y|>R}\frac{(x-y)\cdot(\nabla
\varphi_R(x)-\nabla
\varphi_R(y))|u(x)|^p|u(y)|^p}{|x-y|^{N-\alpha+2}}dxdy.
\end{split}
\end{equation}
By direct calculation, we have
\begin{equation*}
\psi_R'(r)=R\psi'(\frac{r}{R}),\ \psi_R''(r)=\psi''(\frac{r}{R}),
\end{equation*}
\begin{equation*}
2-\psi_R''(r)\geq 0,\
2-\frac{\psi_R'(r)}{r}=2-\frac{R}{r}\psi'(\frac{r}{R})\geq 0,
\end{equation*}
\begin{equation*}
2N-\Delta\varphi_R(x)=2N-\left(\psi_R''(|x|)+\frac{N-1}{|x|}\psi_R'(|x|)\right)\geq
0,
\end{equation*}
\begin{equation*}
\begin{split}
&\partial_i\varphi_R(x)=R\psi'(\frac{|x|}{R})\frac{x_i}{|x|},\
x\cdot\nabla \varphi_R(x)=R|x|\psi'(\frac{|x|}{R}),\\
&\partial_{ij}\varphi_R(x)=\psi''(\frac{|x|}{R})\frac{x_ix_j}{|x|^2}
+R\psi'(\frac{|x|}{R})\frac{\delta_{ij}}{|x|}-R\psi'(\frac{|x|}{R})\frac{x_ix_j}{|x|^3}\lesssim 1,\\
&\Delta
\varphi_R(x)=\psi''(\frac{|x|}{R})+(N-1)\frac{R}{|x|}\psi'(\frac{|x|}{R})\lesssim1,\
\Delta^2 \varphi_R(x)\lesssim R^{-2},
\end{split}
\end{equation*}
\begin{equation*}
\begin{split}
\partial_iu\partial_j\bar{u}\partial_{ij}\varphi_R(x)
&=|\partial_ru|^2\left(\psi''(\frac{|x|}{R})+\frac{R}{|x|}\psi'(\frac{|x|}{R})-\frac{R}{|x|}\psi'(\frac{|x|}{R})\right)\\
&=|\nabla u|^2 \psi''(\frac{|x|}{R}),
\end{split}
\end{equation*}
and
\begin{equation*}
\begin{split}
(x-y)\cdot(\nabla \varphi_R(x)-\nabla \varphi_R(y))\lesssim |x-y|^2.
\end{split}
\end{equation*}
The above estimates and the conservation laws imply that
\begin{equation}\label{e5.344}
\int_{|x|>R}|u|^2\Delta^2\varphi_Rdx\lesssim
\int_{|x|>R}|u|^2R^{-2}dx =O(R^{-2}),
\end{equation}
\begin{equation}\label{e5.342}
\begin{split}
&4\mathrm{Re}\int_{|x|>R}\partial_iu\partial_j\bar{u}\partial_{ij}\varphi_Rdx+4b\int_{|x|>R}|x|^{-4}|u|^2x\cdot\nabla
\varphi_Rdx-8\|u\|_{\dot{H}_b^1(B_R^c)}^2\\
&=4\int_{|x|>R}\psi''(\frac{|x|}{R})|\nabla
u|^2dx+4b\int_{|x|>R}|x|^{-2}|u|^2\frac{R}{|x|}\psi'(\frac{|x|}{R})dx-8\|u\|_{\dot{H}_b^1(B_R^c)}^2\\
&=4\int_{|x|>R}\left(\psi''(\frac{|x|}{R})-2\right)|\nabla
u|^2dx+4b\int_{|x|>R}|x|^{-2}|u|^2\left(\frac{R}{|x|}\psi'(\frac{|x|}{R})-2\right)dx\\
&\lesssim R^{-2}
\end{split}
\end{equation}
and
\begin{equation}\label{e5.343}
\begin{split}
&\int_{|x|>R}(I_\alpha\ast|u|^p)|u|^p \Delta \varphi_Rdx\\
&+\int_{|x|>R}\int_{\mathbb{R}^N}\frac{(x-y)\cdot(\nabla
\varphi_R(x)-\nabla
\varphi_R(y))|u(x)|^p|u(y)|^p}{|x-y|^{N-\alpha+2}}dxdy\\
&+\int_{|x|<R}\int_{|y|>R}\frac{(x-y)\cdot(\nabla
|x|^2-\nabla |y|^2)|u(x)|^p|u(y)|^p}{|x-y|^{N-\alpha+2}}dxdy\\
&+\int_{|x|<R}\int_{|y|>R}\frac{(x-y)\cdot(\nabla
\varphi_R(x)-\nabla
\varphi_R(y))|u(x)|^p|u(y)|^p}{|x-y|^{N-\alpha+2}}dxdy\\
&\lesssim\int_{|x|>R}(I_\alpha\ast|u|^p)|u|^pdx.
\end{split}
\end{equation}

We use  the Hardy-Littlewood-Sobolev inequality, the interpolation
inequality, the conservation laws, the Sobolev imbedding theorem and
Lemma \ref{lem Radial Sob} to estimate
\begin{equation}\label{e1.13}
\begin{split}
&\int_{|x|>R}(I_\alpha\ast|u|^p)|u|^pdx\\
&=\mathcal{A}\int_{|x|>R}\int_{|y|>\frac{R}{2}}\frac{|u(x)|^p|u(y)|^p}{|x-y|^{N-\alpha}}dydx
+\mathcal{A}\int_{|x|>R}\int_{|y|<\frac{R}{2}}\frac{|u(x)|^p|u(y)|^p}{|x-y|^{N-\alpha}}dydx\\
&:=I+II,
\end{split}
\end{equation}
where
\begin{equation}\label{e8.11}
\begin{split}
I&\lesssim
\||u|^p\chi_{B_{{R}/{2}}^c}\|_{L^{\frac{2N}{N+\alpha}}}\||u|^p\chi_{B_{R}^c}\|_{L^{\frac{2N}{N+\alpha}}}\\
&\lesssim
\|u\|_{L^2}^{\frac{N+\alpha}{N}}\|u\|_{L^{\infty}(|x|>\frac{R}{2})}^{p-\frac{N+\alpha}{N}}
\|u\|_{L^2}^{\frac{N+\alpha}{N}}\|u\|_{L^{\infty}(|x|>R)}^{p-\frac{N+\alpha}{N}}\\
&\lesssim
R^{-(N-1)(p-\frac{N+\alpha}{N})}\|u\|_{\dot{H}_b^1}^{p-\frac{N+\alpha}{N}}
\end{split}
\end{equation}
and
\begin{equation}\label{e8.10}
\begin{split}
II&\lesssim R^{-(N-\alpha)}\int_{|y|<\frac{R}{2}}|u(y)|^pdy
\int_{|x|>R}|u(x)|^pdx\\
&\lesssim
R^{-(N-\alpha)}\|u\|_{L^2}^{(1-\theta)p}\|u\|_{L^{2^*}}^{p\theta}\|u\|_{L^2}^2\|u\|_{L^{\infty}(|x|>R)}^{p-2}\\
&\lesssim
R^{-(N-\alpha)}\|u\|_{\dot{H}_b^1}^{p\theta}R^{-\frac{N-2s}{2}(p-2)}\left(\sup_{|x|>R}|x|^{\frac{N-2s}{2}}|u(x)|\right)^{p-2}\\
&\lesssim
R^{-(N-\alpha)-\frac{N-2s}{2}(p-2)}\|u\|_{\dot{H}_b^1}^{p\theta+(p-2)s}\\
&\lesssim
R^{-(N-\alpha+\frac{N-1}{2}(p-2))}\|u\|_{\dot{H}_b^1}^{\frac{N+1}{2}(p-2)}
\end{split}
\end{equation}
with $s=\frac{1}{2}$ and
$\frac{1}{p}=\frac{1-\theta}{2}+\frac{\theta}{2^*}$. Inserting
(\ref{e5.344})-(\ref{e8.10}) into (\ref{e5.341}),  we complete the
proof.
\end{proof}

\textbf{5.2.  Blowup}.

\textbf{Proof of the blowup part (ii) in Theorem \ref{thm1.3}}. By
using the standard virial identity (Lemma \ref{lem6.2}) and the
conservation laws, we have
\begin{equation*}
\frac{d^2}{dt^2}\|xu(t)\|_{L^2}^2=16E_b(u(t))=16E_b(u_0)<0.
\end{equation*}
By the standard convexity arguments (see \cite{Glassey 1977}), we
know that $u$ blows up in finite time in both time directions.

\textbf{Proof of the blowup part (ii) in Theorem \ref{thm1.4}}. We
proceed as in the proof of Theorem \ref{thm1.4} (i). It follows from
the assumption
\begin{equation}\label{e7.1}
E_b(u_0)\|u_0\|_{L^2}^{2\sigma}<H(b)
\end{equation}
that there exists $\delta_1>0$ small enough such that
\begin{equation}\label{e7.6}
E_b(u_0)\|u_0\|_{L^2}^{2\sigma}<(1-\delta_1)H(b),
\end{equation}
which combines with (\ref{e6.1}) and the conservation laws implies
that
\begin{equation}\label{e7.5}
f(\|u(t)\|_{\dot{H}_b^1}\|u(t)\|_{L^2}^{\sigma})\leq
E_b(u(t))\|u(t)\|_{L^2}^{2\sigma}< (1-\delta_1)H(b)\ \mathrm{for\
any\ }t\in I.
\end{equation}
Since $f(K(b))=H(b)$,
$\|u_0\|_{\dot{H}_b^1}\|u_0\|_{L^2}^{\sigma}>K(b)$, in view of
(\ref{e7.5}), and the continuity argument, there exists $\delta_2>0$
depending on $\delta_1$ such that
\begin{equation}\label{e7.7}
\|u(t)\|_{\dot{H}_b^1}\|u(t)\|_{L^2}^{\sigma}>(1+\delta_2)K(b)\
\mathrm{for\ any\ }t\in I.
\end{equation}

Next we claim that for $\epsilon>0$ small enough, there exists $c>0$
such that
\begin{equation}\label{e7.8}
(8+\epsilon)\|u(t)\|_{\dot{H}_b^1}^2+\frac{4\alpha+4N-4Np}{p}\int_{\mathbb{R}^N}(I_\alpha\ast|u|^p)|u|^pdx
\leq -c
\end{equation}
for any $t\in I$. Indeed, multiplying the left side of (\ref{e7.8})
by $\|u(t)\|_{L^2}^{2\sigma}$ and using (\ref{e7.5}), (\ref{e7.7}),
the conservation laws and
$H(b)=\frac{Np-N-\alpha-2}{2(Np-N-\alpha)}K^2(b)$, we get that
\begin{equation*}
\begin{split}
&\mathrm{LHS}(\ref{e7.8})\times \|u(t)\|_{L^2}^{2\sigma}\\
=&
8(Np-N-\alpha)E_b(u)\|u(t)\|_{L^2}^{2\sigma}+(8+4\alpha+4N-4Np+\epsilon)\|u(t)\|_{\dot{H}_b^1}^2\|u(t)\|_{L^2}^{2\sigma}\\
\leq &
8(Np-N-\alpha)(1-\delta_1)H(b)+(8+4\alpha+4N-4Np+\epsilon)(1+\delta_2)^2K(b)^2\\
=&\left[(4Np-4N-4\alpha-8)(1-\delta_1-(1+\delta_2)^2)+\epsilon(1+\delta_2)^2\right]K(b)^2\\
<&-c
\end{split}
\end{equation*}
by choosing $\epsilon>0$ small enough. Hence, the claim holds.

Case 1 ($xu_0\in L^2(\mathbb{R}^N)$). By using the standard virial
identity (Lemma \ref{lem6.2}) and (\ref{e7.8}), we have
\begin{equation*}
\frac{d^2}{dt^2}\|xu(t)\|_{L^2}^2=8\|u(t)\|_{\dot{H}_b^1}^2
+\frac{4\alpha+4N-4Np}{p}\int_{\mathbb{R}^N}(I_\alpha\ast|u|^p)|u|^pdx\leq
-c,
\end{equation*}
which implies that $u$ blows up in finite time.

Case 2 ($u_0\in H_r^1(\mathbb{R}^N)$). Since
$p<\min\{p^b,\frac{2N+6}{N+1}\}$, we have $p-\frac{N+\alpha}{N}<2$
and $\frac{N+1}{2}(p-2)<2$. By using the local virial identity
(Lemma \ref{lem6.3}), the Young inequality (\ref{e2.3}) and
(\ref{e7.8}), for any $\epsilon>0$,
 we have
\begin{equation*}
\begin{split}
\frac{d^2}{dt^2}\|\psi_R(x)u\|_{L^2}^2&\leq
8\|u\|_{\dot{H}_b^1}^2+\frac{4\alpha+4N-4Np}{p}\int_{\mathbb{R}^N}(I_\alpha\ast|u|^p)|u|^pdx\\
&\quad +
O(R^{-2})+O(R^{-(N-1)(p-\frac{N+\alpha}{N})}\|u\|_{\dot{H}_b^1}^{p-\frac{N+\alpha}{N}})\\
&\quad+
O(R^{-(N-\alpha+\frac{N-1}{2}(p-2))}\|u\|_{\dot{H}_b^1}^{\frac{N+1}{2}(p-2)})\\
&\leq
(8+\epsilon)\|u(t)\|_{\dot{H}_b^1}^2
+\frac{4\alpha+4N-4Np}{p}\int_{\mathbb{R}^N}(I_\alpha\ast|u|^p)|u|^pdx\\
&\quad+O(R^{-2})+O(\epsilon^{-\frac{p-\frac{N+\alpha}{N}}{2-(p-\frac{N+\alpha}{N})}}R^{-(N-1)(p-\frac{N+\alpha}{N})\frac{2}{2-(p-\frac{N+\alpha}{N})}})\\
&\quad+ O(\epsilon^{-\frac{\frac{N+1}{2}(p-2)}{2-\frac{N+1}{2}(p-2)}}R^{-(N-\alpha+\frac{N-1}{2}(p-2))\frac{2}{2-\frac{N+1}{2}(p-2)}})\\
&\leq -c/2
\end{split}
\end{equation*}
by choosing $\epsilon>0$ small enough and by choosing $R>1$  large
enough depending on $\epsilon$. Hence, the solution $u$ blows up in
finite time.

\bigskip

\textbf{Proof of (1) in  Remark \ref{rem1.1}}. Let $E_b>0$, we find
initial value $u_0\in H^1$ with $E_b(u_0)=E_b$ such that the
corresponding solution $u$ blows up in finite time. We follow the
standard argument (see Remark 6.5.8 in \cite{Cazenave 2003}). Using
the virial identity with $p=p_b$, we have
\begin{equation*}
\frac{d^2}{dt^2}\|xu(t)\|_{L^2}^2=16 E_b(u_0).
\end{equation*}
Hence,
\begin{equation*}
\|xu(t)\|_{L^2}^2=\|xu_0\|_{L^2}^2+4t\mathrm{Im}\int_{\mathbb{R}^N}\overline{u_0}x\cdot\nabla
\overline{u_0}dx+8t^2 E_b(u_0):=f(t).
\end{equation*}
Note that if $f$ takes negative values, then the solution $u$ must
blow up in finite time. In order to make $f$ takes negative values,
we need
\begin{equation}\label{e8.2}
\left(\mathrm{Im}\int_{\mathbb{R}^N}\overline{u_0}x\cdot\nabla
\overline{u_0}dx\right)^2>2E_b(u_0)\|xu_0\|_{L^2}^2.
\end{equation}
Now fix $\theta\in C_0^{\infty}(\mathbb{R}^N)$ a real-valued
function and set $\psi(x)=e^{-i|x|^2}\theta(x)$. We see that
$\psi\in C_0^{\infty}(\mathbb{R}^N)$ and
\begin{equation*}
\mathrm{Im}\int_{\mathbb{R}^N}\bar{\psi}x\cdot\nabla
\bar{\psi}dx=-2\int_{\mathbb{R}^N}|x|^2|\theta|^2dx<0.
\end{equation*}

Denote
\begin{equation*}
\begin{split}
&A=\frac{1}{2}\|\psi\|_{\dot{H}_b^1}^2,\
B=\frac{1}{2p}\int_{\mathbb{R}^N}(I_\alpha\ast|\psi|^p)|\psi|^pdx,\\
&C=\|x\psi\|_{L^2}^2,\
D=-\mathrm{Im}\int_{\mathbb{R}^N}\bar{\psi}x\cdot\nabla
\bar{\psi}dx.
\end{split}
\end{equation*}
Then $A,\ B,\ C,\ D>0$. For $\lambda, \mu>0$, set
$u_0(x)=\lambda\psi(\mu x)$.  By direct calculation, we have
\begin{equation*}
E_b(u_0)=\lambda^2\mu^{2-N}A-\lambda^{2p}\mu^{-N-\alpha}B
=\lambda^2\mu^{2-N}\left(A-\frac{\lambda^{2p-2}}{\mu^{2+\alpha}}B\right),
\end{equation*}
\begin{equation*}
\|xu_0\|_{L^2}^2=\lambda^{2}\mu^{-2-N}C,
\end{equation*}
\begin{equation*}
\mathrm{Im}\int_{\mathbb{R}^N}\overline{u_0}x\cdot\nabla
\overline{u_0}dx=-\lambda^2\mu^{-N}D.
\end{equation*}
Next,  we  choose $\lambda$ and $\mu$ such that $E_b(u_0)=E_b$ and
(\ref{e8.2}) holds. Hence,
\begin{equation}\label{e8.3}
\lambda^2\mu^{2-N}\left(A-\frac{\lambda^{2p-2}}{\mu^{2+\alpha}}B\right)=E_b
\end{equation}
and
\begin{equation}\label{e8.4}
\frac{D^2}{C}>2\left(A-\frac{\lambda^{2p-2}}{\mu^{2+\alpha}}B\right).
\end{equation}
Fix $0<\epsilon<\min\{A,\frac{D^2}{2C}\}$ and choose
\begin{equation}\label{e8.5}
\frac{\lambda^{2p-2}}{\mu^{2+\alpha}}B=A-\epsilon.
\end{equation}
It is obvious that (\ref{e8.4}) is satisfied. Condition (\ref{e8.3})
and (\ref{e8.5}) imply
\begin{equation}\label{e8.6}
\lambda^2\mu^{2-N}\epsilon=E_b.
\end{equation}
So we can solve $\mu$ and $\lambda$ from (\ref{e8.5}) and
(\ref{e8.6}). The proof is complete.

\bigskip

\textbf{Acknowledgements.} This work was supported by the research
project of Tianjin education commission with the Grant no. 2017KJ173
``Qualitative studies of solutions for two kinds of nonlocal
elliptic equations".



\end{document}